\author{Janin Heuer}
\address{Janin Heuer, Technische Universit\"at Braunschweig, Institut f\"ur Analysis und Algebra, AG Algebra, Universit\"atsplatz 2, 38106 Braunschweig,
 Germany\medskip}
\email{janin.heuer@tu-braunschweig.de}
\author{Timo de Wolff}
\address{Timo de Wolff, Technische Universit\"at Braunschweig, Institut f\"ur Analysis und Algebra, AG Algebra, Universit\"atsplatz 2, 38106 Braunschweig,
 Germany\medskip}
\email{t.de-wolff@tu-braunschweig.de}
\subjclass[2010]{12D15, 13J30, 14P99, 90C26, 90C30}
\keywords{certificate of nonnegativity, circuit polynomial, DSONC, nonnegative polynomial, SDSOS, SONC, tropical geometry}
\title[]{The Duality of SONC: Advances in Circuit-based Certificates}
\begin{document}

\begin{abstract}
	The cone of sums of nonnegative circuits (SONCs) is a subset of the cone of nonnegative polynomials / exponential sums, which has been studied extensively in recent years. 
	In this article, we construct a subset of the SONC cone which we call the DSONC cone. 
	The DSONC cone can be seen as an extension of the dual SONC cone; 
	 membership can be tested via linear programming. 
	We show that the DSONC cone is a proper, full-dimensional cone, we provide a description of its extreme rays, and collect several properties that parallel those of the SONC cone.
	Moreover, we show that functions in the DSONC cone cannot have real zeros, which yields that DSONC cone does not intersect the boundary of the SONC cone.
	Furthermore, we discuss the intersection of the DSONC cone with the SOS and SDSOS cones.
	Finally, we show that circuit functions in the boundary of the DSONC cone are determined by points of equilibria, which hence are the analogues to singular points in the primal SONC cone, and relate the DSONC cone to tropical geometry.
\end{abstract}

\maketitle

\section{Introduction}

For a finite set $\struc{A} \subset \R^n$ we define the space $\struc{\R^A}$ of \struc{exponential sums} supported on $A$ as the set of functions of the form 
\begin{align}
	f(\xb) \ = \ \sum_{\alpb \in A} c_{\alpb} \expalpha,
\label{eqn:ExpSum}
\end{align}
where $c_{\alpb} \in \R$ for all $\alpb \in A$.
Exponential sums, which are also referred to as \struc{signomials}, are a class of functions relevant to a variety of applications.
Specifically, one is interested in minimizing functions of the form \cref{eqn:ExpSum} (potentially subject to constraints); see e.g. \cite{Erguer:Paouris:Rojas:TropVarietiesExpSums,Boyd:et:al:TutorialOnGP,Duffin:Peterson:Signomials} for an overview.

In the case $A \subset \N^n$, the function $f$ defined in \cref{eqn:ExpSum} can be expressed as a polynomial on the positive orthant via the logarithmic change of variables
\begin{align*}
	(y_1, \ldots, y_n) \ \mapsto \ f(\ln(y_1), \ldots, \ln(y_n)) \ = \ \sum_{\alpb \in A} c_{\alpb} y_1^{\alpb_1} \cdots y_n^{\alpb_n}.
\end{align*}
The problem of minimizing exponential sums or polynomials is closely related to the question of deciding whether these functions are nonnegative. 
I.e., instead of minimizing an exponential sum or polynomial $f$ we can equivalently search for the maximal constant $\gamma \in \R$ such that $f - \gamma \ge 0$.
It is well-known that these nonnegativity problems are extremely challenging, even in the polynomial case with low degrees, as this class contains (when phrased as a decision problem) for example NP-complete problems like \textsc{MAXCUT}; see e.g. \cite{Laurent:Survey}.

A common way to tackle these problems is to search for \struc{certificates of nonnegativity}.
These are conditions on exponential sums or polynomials which imply nonnegativity while being easier to test than nonnegativity itself.
The classical approach, dating back to the 19th century (e.g., \cite{Hilbert:Seminal}), is to represent a polynomial as a \struc{sum of squares (SOS)} of other polynomials.
Computationally, this approach requires \struc{semidefinite programming (SDP)}, and has been studied extensively in the last two decades.
For an overview see e.g. \cite{Lasserre:GlobalOpt,Parrilo:Thesis}.
Although being extremely successful, the SOS method neither preserves sparsity of the support set $A$, nor can it be applied to the case of exponential sums with general support sets in $\R^n$.
Moreover, it comes with significant computational challenges, as for $n$-variate polynomials in degree $2d$ the corresponding SDP contains matrices of size $\binom{n+d}{d}$.

For this reason, we focus on an alternative certificate of nonnegativity which attempts to decompose exponential sums into \struc{sums of nonnegative circuits (SONC)}.
Circuit polynomials were studied in a special case by Reznick in \cite{Reznick:AGI} (where they are called \struc{agiforms}), and then introduced for the general polynomial case by Iliman and the second author in \cite{Iliman:deWolff:Circuits}.
Circuit polynomials are supported on minimally affine dependent sets $A$ and satisfy a collection of further conditions; for a formal introduction see \cref{definition:Circuit}.
Iliman and the second author showed that deciding nonnegativity of circuit polynomials simply requires solving a system of linear equations, which is a consequence of the \struc{inequality of arithmetic and geometric means (AM/GM)}.
These results can immediately be adapted from the polynomial case to the case of exponential sums; see \cref{theorem:NonnegativeCircuits}. 

The set $\struc{\ASONC}$ of SONCs supported on $A$ is a closed convex cone with the same dimension as the cone of nonnegative polynomials supported on $A$ \cite{Dressler:Iliman:deWolff:Positivstellensatz}, which was studied by various authors in different settings.
Chandrasekaran and Shah introduced AM/GM-based methods under the name \struc{sums of AM/GM exponentials (SAGE)} in \cite{Chandrasekaran:Shah:SAGE-REP}, which were further investigated in, e.g., \cite{Murray:Chandrasekaran:Wierman:NewtonPolytopes,Murray:Chandrasekaran:Wierman:SigOptREP}.
Another closely related approach is the \struc{$\cS$-cone} introduced by Katth\"an, Theobald and Naumann in \cite{Katthaen:Naumann:Theobald:UnifiedFramework}.
Furthermore, Forsg\aa rd and the second author provided a new, more abstract interpretation of the SONC approach in \cite{Forsgaard:deWolff:BoundarySONCcone}.

Recently, the \emph{dual} cone $\struc{\dualASONC}$ corresponding to these objects has been studied in, e.g., \cite{Dressler:Heuer:Naumann:deWolff:DualSONCLP,Dressler:Naumann:Theobald:DualSONC,Katthaen:Naumann:Theobald:UnifiedFramework}.
This cone is particularly interesting since first, circuit functions whose coefficients are contained in the dual SONC cone are also contained in the primal SONC cone; see \cite[Proposition 3.6]{Dressler:Heuer:Naumann:deWolff:DualSONCLP}, and second, optimizing over the dual SONC cone can be done via linear programming; see \cite[Proposition 4.1]{Dressler:Heuer:Naumann:deWolff:DualSONCLP}.
Since optimizing over the primal SONC cone requires geometric or relative entropy programming, see e.g. \cite{Chandrasekaran:Shah:SAGE-REP, Dressler:Iliman:deWolff:FirstConstrained, Iliman:deWolff:FirstGP}, relaxing the SONC certificate to a certificate of nonnegativity induced by the dual SONC cone provides a significant computational advantage.

\medskip

In this article we extend and generalize the observations made in \cite{Dressler:Heuer:Naumann:deWolff:DualSONCLP} by defining the \struc{DSONC cone} as the conic closure of the set of circuit functions with coefficient vector in the dual SONC cone; see \cref{definition:DSONC} for a formal definition.
On the one hand, the DSONC cone $\struc{\ADSONC}$ is a subset of the primal SONC cone $\ASONC$ (\cref{corollary:DSONCinPrimal}), which contains more functions than the certificates introduced in \cite{Dressler:Heuer:Naumann:deWolff:DualSONCLP}.
On the other hand, as for the dual SONC cone, one can check membership in the DSONC cone by linear programming, which makes it a very interesting object with high potential for applications.

\medskip

Our contribution consists of a fundamental analysis of the DSONC cone.

In \cref{section:DSONCIntroduction}, we formally introduce the DSONC cone in \cref{definition:DSONC}. 
We show that the statement ``SONC = SAGE'' (i.e., results showing that approaches using circuit functions or AM/GM exponentials lead to the same convex cone) translate to the DSONC case, leading to a ``DSONC = DSAGE'' counterpart result, \cref{corollary:DSONCisDSAGE}.

In \cref{section:PropertiesDSONCCone} we discuss further structural properties of DSONCs. 
In \cref{proposition:DSONCconeProper} we prove that the DSONC cone is a proper, full-dimensional cone in $\R^A$, and that membership of a single circuit function $f$ in the DSONC cone can be decided by an invariant, which we call \struc{dual circuit number}, an analogue of the circuit number, which decides membership of a circuit function in the SONC cone; see \cref{definition:DualCircuitNumber} and \cref{corollary:DualCircuitNumber}.
Moreover, the dual circuit number proves a close relation of the DSONC cone to an object called the power cone; see \cref{equation:PowerCone}.
As in the primal case, the dual circuit number depends on the coefficients of $f$ and barycentric coordinates given by the supporting circuit alone.

The dual and the primal circuit number are closely related to each other, \cref{corollary:PrimalDualConnection}, which allows to improve bounds computed via LPs exploiting DSONC certificates further; see \cref{remark:LinearProgramming}.
Furthermore, we derive from this relation that the DSONC cone is contained in the strict interior of the SONC cone, thus only containing functions that are strictly positive; see \cref{corollary:NoZeros}.
In \cref{proposition:ExtremeRays} we classify the extreme rays of the DSONC cone, which turn out to behave analogously to the primal case.
Finally, we show that the DSONC cone is closed under affine transformation \cref{proposition:ClosureAffine}, but not under multiplication \cref{proposition:DSONCNotClosedUnderMultiplication}.
The latter is interesting, since (although the primal SONC cone is not closed under multiplication) the dual SONC cone is closed under multiplication.
We show that this property leads, however, to another kind of closure given by the standard inner product of the vector space of exponential sums over a finite support $A$; \cref{proposition:DSONCClosureInnerProduct}.

In \cref{section:DSONCandSOS} we investigate the relation of DSONC to SDSOS.
While it is immediately clear from known results \cite{Hartzer:Rohrig:deWolff:Yuruk:MMS,Iliman:deWolff:Circuits,Reznick:AGI} that, as in the general SONC case, the intersection of the DSONC and the SOS cone is governed by \struc{maximal mediated sets} (see \cref{definition:MaxMediatedSet} and \cref{corollary:DSONCandMMS} for details), it is not obvious how DSONCs relate to \struc{scaled diagonally dominant (SDSOS) polynomials} introduced by Ahmadi and Majumdar \cite{Ahmadi:Majumdar:DSOSandSDSOS}, which are located in the intersection of SOS and SONC.
In \cref{proposition:RelationDSONCandSDSOS} we show that for every fixed support set the SDSOS and the DSONC cone have non-empty intersection, but are not contained in each other.

In the final \cref{Section:StructuralViewpoints} we investigate DSONCs from a more abstract point of view.
A distinguished feature of circuit functions in the boundary of the SONC cone is that they have a single singular point (i.e., in particular a zero; in the case of circuit polynomials there is a single singular point on the positive orthant).
In \cite{Forsgaard:deWolff:BoundarySONCcone} Forsg\aa{}rd and the second author show that (for a fixed support) the coordinates of this point determine a circuit function entirely and that in consequence every such circuit function can be interpreted as an agiform in the sense of Reznick \cite{Reznick:AGI} joint with a group action.
Hence, as circuit functions in the boundary of the DSONC cone are strictly positive, an imminent question is whether there is a counterpart of the singular points and the group action.
We show that this is indeed the case. 
For every circuit function $f$ in the boundary of the DSONC cone there exists a distinguished, unique \struc{equilibrium point}, i.e., a point where all monomials attain the same value \cref{corollary:EquilibriaInnerTerm}, which relates the boundary of the DSONC cone to tropical geometry; see \cref{corollary:DSONCTropical}.
In \cref{theorem:EquilibraTorusAction} we conclude that, as in the primal case, assuming a fixed support, the coordinates of this point determine the circuit function, and every such circuit function can be interpreted as a function whose coefficient vector is the all-1-vector with a single negative sign in the last entry, joint with a group action.

\section*{Acknowledgments}

The authors were supported by the DFG grant WO 2206/1-1.

\section{Preliminaries}

\subsection{Notation}

Throughout this paper we refer to vectors by bold symbols. 
E.g., we write the vector $(x_1, \ldots, x_n) \in \R^n$ as $\struc{\Vector{x}}$.
We further use the short notation $\struc{\N^*}$ for $\N \setminus \{0\}$ (analogously for $\R^*$ etc.),
$\struc{\xb^{\alpb}}$ for $x_1^{\alp_1} \cdots x_n^{\alp_n}$, and $\struc{[n]}$ for $\{1,2,\ldots,n\}$ for all $n \in \N^*$.
If we restrict $\R$ to the positive orthant, then we write $\struc{\R_{>0}} = \{ x \in \R \ : \ x > 0 \}$.

For some given set $S \subseteq \R^n$ we denote the \struc{convex hull} of $S$ by $\struc{\conv(S)}$, the \struc{conic hull} of $S$ by $\struc{\cone(S)}$, and we refer to the set of \struc{vertices} of a convex set $C = \conv(S)$ by $\struc{\vertices{C}}$.
We further write $\struc{\relint{C}}$ for the \struc{relative interior} of some convex set $C$.
If $C$ is a given cone in $\R^n$, or a linear space, then we write $\struc{\check{C}}$ for the \struc{dual cone/space}.
We refer to the \struc{boundary} of a cone $C$ in $\R^n$ by $\struc{\partial C}$. 
For $S_1, \ldots, S_d \subseteq \R^n$ we denote the \struc{Minkowski sum} of $S_1, \ldots, S_d$ by $\struc{\sum_{i = 1}^d S_i}$.
For the (natural) logarithmic function $\ln(\cdot)$ and $x \in \R_{>0}$ we adhere to the common conventions $\ln(0) = - \infty$ and $\ln(\frac{0}{0}) = 0$, as well as $0\ln(\frac{0}{x}) = 0$ and $\ln(\frac{x}{0}) = \infty$.

\subsection{Signomial Optimization and the Signed SONC Cone}
\label{section:SignomialOptimizationSignedSONC}

Consider some finite set $A \subset \R^n$.
We define the \struc{set of exponential sums supported on $A$} as
\begin{align*}
	\struc{\R^A} \ = \ \spann_{\R} \left( \left\{ \expalpha \ : \ \alpb \in A \right\} \right).
\end{align*}
Thus, every \struc{exponential sum} (or \struc{signomial}) in $\R^A$ can be written as
\begin{align*}
	f(\xb) \ = \ \sum_{\alpb \in A} c_{\alpb} \expalpha 
\end{align*}
with \struc{coefficients} $c_{\alpb} \in \R$. 
We denote the \struc{coefficient vector of $f$} by $\struc{\coeff(f)} = \struc{\Vector{c}} = \left(c_{\alpb}\right)_{\alpb \in A}$.
We call $\struc{\supp(f)} = \set{\alpb \in A \subset \R^n \ : \ c_{\alpb} \ne 0}$ the \struc{support (set)} of $f$. 
The individual summands $c_{\alpb} \expalpha$ of $f$ are called \struc{exponential monomials}.
If the support of $f$ is fixed, then $f$ is uniquely defined by its coefficients.
Thus, for fixed $A$, there exists an isomorphism $\R^A \cong \R^{\# A}$,
and we can interpret the coefficient vector $\Vector{c}$ as an element in $\R^A$ as well.
This notation is referred to as \emph{``A-philosophy''} following Gelfand, Kapranov, and Zelevinsky, see \cite{Gelfand:Kapranov:Zelevinsky:Discriminants}, and also known as \emph{``fewnomial theory''}.

Whenever we fix the support set $A$ of a family of functions and additionally fix the orthant of $\R^A$ in which the coefficient vectors of this family are contained, we say that we assume a \struc{fixed sign distribution}.
Then, we are able to differentiate between the set $\struc{A^+}$ of exponents $\alpb$ in $A$ which have \emph{positive} corresponding coefficients $c_{\alpb}$, and the set $\struc{A^-}$ of exponents $\betab$ in $A$ which have \emph{negative} corresponding coefficients $c_{\betab}$, resulting in the decomposition 
\begin{align}
	A \ = \ A^+ \cup A^-
\label{eqn:SupportDecomposition}
\end{align}
of the support set $A$ on the particular orthant of $\R^A$.
When $A$ is the support of an exponential sum $f$, we also call $A^+$ the \struc{positive support} of $f$ and $A^-$ the \struc{negative support} of $f$.
Note that this implies $A^+ \cap A^- = \emptyset$.

Assuming knowledge of the signs of the coefficient vector of an exponential sum is common practice for studying exponential sums from the perspective of optimization, i.e.,  when one is interested in the \struc{global signomial optimization problem}
\begin{align}
	\inf_{\xb \in \R^n} f(\xb),
\label{prob:SignomialOpt}
\end{align}
see e.g. \cite{Dressler:Iliman:deWolff:FirstConstrained, Iliman:deWolff:FirstGP, Murray:Chandrasekaran:Wierman:NewtonPolytopes, Murray:Chandrasekaran:Wierman:SigOptREP}.
By rewriting \cref{prob:SignomialOpt} as
\begin{align}
	\sup \{\gamma \in \R \ : \ f(\xb) - \gamma \ge 0 \ \text{ for all } \xb \in \R^n \},
\label{prob:Nonnegativity}
\end{align}
we can turn any global signomial optimization problem into a problem of deciding containment in the \struc{nonnegativity cone}
\footnote{
Usually, in the context of polynomials we would denote by $\cP_{A}$ the cone of polynomials nonnegative on the entire $\R^n$, and write $\cP^+_{A}$ whenever we talk about polynomials being nonnegative on the positive orthant or about exponential sums.
Es we focus on exponential sums, we omit the $+$ indicating the positive orthant for brevity during the course of this paper for the nonnegativity cone and all subsets thereof.
}
\begin{align}
	\struc{\NonnegCone} \ = \ \left\{ f \in \R^A \ : \ f(\xb) \ge 0 \ \text{ for all } \xb \in \R^n \right\} \subset \R^A.
\label{definition:NonnegativityCone}
\end{align}
Note that the set $\NonnegCone$ is a full-dimensional closed convex cone in $\R^A$.
Since verifying containment in $\NonnegCone$ is an NP-hard problem, see e.g. \cite{Laurent:Survey}, we look for conditions on exponential sums that \emph{imply} nonnegativity.
We call such conditions \struc{certificates of nonnegativity}.
The certificate we are focusing on is the set of \struc{sums of nonnegative circuit functions (SONC)}. 
The name is motivated by the support set of these functions.

\begin{definition}[Circuits]
	A finite set $C \subset \R^n$ is called a \struc{circuit} if it is minimally affine dependent, see \cite{DeLoera:et:al:Triangulations}.
	If furthermore $\conv(C)$ is a simplex, then we call $C$ a \struc{simplicial circuit}.
	Here and in what follows we always assume that circuits are simplicial.
	We refer to the set of all (simplicial) circuits that can be constructed from some finite set $A \subset \R^n$ by $\struc{\cC(A)}$.

	A (simplicial) circuit $C = \vertices{C} \cup \{\betab\} \in \cC(A)$, where $\betab$ denotes the unique point in the interior of $\conv(C)$, is called \struc{minimal} in $A$ if 
	\begin{align*}
		(\conv(\vertices{C}) \setminus \vertices{C}) \cap A \ = \ \set{\betab}.
	\end{align*}
	The set of all minimal (simplicial) circuits that can be constructed from a finite set $A \subset \R^n$ is denoted by $\struc{\cCmin(A)} \subseteq \cC(A)$.
	Moreover, we also consider sets consisting of a single point to be circuits, i.e., $\{\alpb\} \in \cC(A)$ for all $\alpb \in A$.
\label{definition:Circuit}
\end{definition}

Minimal circuits are also referred to as \struc{reduced circuits}, see e.g. \cite{Murray:Naumann:Theobald:Sublinear}, or as \struc{$w$-thin}, see \cite{Reznick:AGI}.
Here, we follow the notation introduced in \cite{Forsgaard:deWolff:BoundarySONCcone}.

\begin{definition}[Circuit Functions]
	Let $A \subset \R^n$ be finite and let $f$ be an exponential sum supported on $A$.
	Then $f$ is called a \struc{circuit function} if it satisfies that 
	\begin{enumerate}
		\item $A$ is a simplicial circuit,
		\item if $\alpb \in \vertices{\conv(A)}$, then $c_{\alpb} > 0$, and
		\item if $\betab \in A \setminus \vertices{\conv(A)}$, then $\betab$ is in the strict interior of $\conv(A)$.
	\end{enumerate}
	If $A \subset \N^n$, then we speak about \struc{circuit polynomials} on $\R^n_{>0}$.
\label{definition:CircuitFunction}
\end{definition}

It is well-known that Condition (2) in \cref{definition:CircuitFunction} is a necessary condition for containment in $\NonnegCone$ for any function in $\R^A$, see \cite{Reznick:Extremal} and more recent works such as \cite{Feliu:Kaihnsa:Yueruek:deWolff:Multistationarity}.
Hence, we assume that
\begin{align}
	\vertices{\conv(A)} \subseteq A^+ \ \ne \ \emptyset
\label{assumption:Vertices}
\end{align}
is always fulfilled in the remainder of this paper.
Note that circuit functions $f$ supported on $A$ satisfy that, in addition to \cref{assumption:Vertices} being satisfied, their negative support $A^-$ can contain at most one point, namely $\betab$.
Since $\conv(A)$ is, by \cref{definition:CircuitFunction}, a simplex, there exist \emph{unique} \struc{barycentric coordinates} $\struc{\Vector{\lambda}} \in (0,1)^{\#A^+}$ such that
\begin{align*}
	\betab \ = \ \langle \Vector{\lambda}, \alpb \rangle \ = \ \sum_{\alpb \in A^+} \lambda_{\alpb} \alpb.
\end{align*}
We use this fact to decide nonnegativity of circuit functions.

\begin{theorem}[{\cite[Theorem 1.1]{Iliman:deWolff:Circuits}}]
	Let $f(\xb) = \sum_{\alpb \in A^+} c_{\alpb} \expalpha + c_{\betab} \expbeta$ be a circuit function with $\supp(f) = A = A^+ \cup A^-$, where $A^- = \{  \betab \} \ne \emptyset$.
	Let $\Vector{\lambda} \in (0,1)^{\#A^+}$ be the (unique) barycentric coordinates of $\betab$ with respect to $A^+$.
	Then $f$ is nonnegative if and only if 
	\begin{align*}
		\abs{c_{\betab}} \ \le \ \struc{\Theta_f} \ = \ \prod_{\alpb \in A^+} \left( \frac{c_{\alpb}}{\lambda_{\alpb}}\right)^{\lambda_{\alpb}}.
	\end{align*}
\label{theorem:NonnegativeCircuits}
\end{theorem}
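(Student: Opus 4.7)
The plan is to derive the statement as a direct application of the weighted arithmetic--geometric mean inequality to the positive part of $f$. Since $A^- = \{\betab\}$, the coefficient $c_{\betab}$ is negative, so $|c_{\betab}| = -c_{\betab}$. I would begin with the key substitution: for an arbitrary $\xb \in \R^n$, set $y_{\alpb}(\xb) := (c_{\alpb}/\lambda_{\alpb})\exp(\langle \alpb, \xb\rangle)$ for each $\alpb \in A^+$. These quantities are positive, because $c_{\alpb} > 0$ (circuit-function condition) and $\lambda_{\alpb} \in (0,1)$, and the weights $\lambda_{\alpb}$ sum to one since they are the barycentric coordinates of $\betab$ with respect to $A^+$.

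Applying weighted AM/GM with these data yields
\[
\sum_{\alpb \in A^+} \lambda_{\alpb}\, y_{\alpb}(\xb) \ \geq \ \prod_{\alpb \in A^+} y_{\alpb}(\xb)^{\lambda_{\alpb}}.
\]
The left side collapses to $\sum_{\alpb \in A^+} c_{\alpb}\exp(\langle \alpb, \xb\rangle)$, while on the right the exponents of $\exp$ sum to $\sum_{\alpb} \lambda_{\alpb}\alpb = \betab$, so the right-hand side simplifies to $\Theta_f \exp(\langle \betab, \xb\rangle)$. Adding $c_{\betab}\exp(\langle \betab, \xb\rangle)$ to both sides gives
\[
f(\xb) \ \geq \ (\Theta_f + c_{\betab})\exp(\langle \betab, \xb\rangle),
\]
which is nonnegative for every $\xb$ precisely when $\Theta_f + c_{\betab} \geq 0$, i.e., when $|c_{\betab}| \leq \Theta_f$. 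This settles the sufficient direction.

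For the converse, I would exhibit an explicit \emph{equilibrium point} $\xb^{\ast} \in \R^n$ at which the AM/GM inequality becomes equality. Equality holds iff all $y_{\alpb}(\xb^\ast)$ coincide; after fixing a reference $\alpb_0 \in A^+$ this amounts to the linear system
\[
\langle \alpb - \alpb_0, \xb^{\ast}\rangle \ = \ \ln\!\left(\frac{c_{\alpb_0}\lambda_{\alpb}}{c_{\alpb}\lambda_{\alpb_0}}\right), \quad \alpb \in A^+ \setminus \{\alpb_0\}.
\]
Because $\conv(A)$ is a simplex with $\betab$ in its strict interior (\cref{definition:CircuitFunction}), the set $A^+ = \vertices{\conv(A)}$ is affinely independent, so the differences $\{\alpb - \alpb_0\}$ are linearly independent in $\R^n$ and this system admits a solution $\xb^{\ast}$. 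At $\xb^{\ast}$ the chain of inequalities above is tight, whence $f(\xb^{\ast}) = (\Theta_f + c_{\betab})\exp(\langle \betab, \xb^{\ast}\rangle)$, which is strictly negative whenever $|c_{\betab}| > \Theta_f$.

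The argument is driven entirely by weighted AM/GM, so no substantial analytic obstacle arises. The main step is the bookkeeping that packages the barycentric identity $\sum_{\alpb}\lambda_{\alpb}\alpb = \betab$ into the exponent on the geometric-mean side, and the only subtle point is ensuring solvability of the equilibrium system, which reduces to the affine independence of the vertices of the supporting simplicial circuit.
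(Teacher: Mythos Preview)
Your argument is correct. The paper does not supply its own proof of this theorem; it is quoted as a preliminary result from \cite{Iliman:deWolff:Circuits}, and the surrounding text explicitly notes that nonnegativity of circuit functions ``is a consequence of the inequality of arithmetic and geometric means (AM/GM)'', which is exactly the mechanism you use. Your write-up therefore matches the intended approach: the substitution $y_{\alpb} = (c_{\alpb}/\lambda_{\alpb})e^{\langle \alpb,\xb\rangle}$ together with $\sum_{\alpb}\lambda_{\alpb}\alpb = \betab$ is the standard packaging of weighted AM/GM that yields $f(\xb) \geq (\Theta_f + c_{\betab})e^{\langle \betab,\xb\rangle}$, and your equilibrium-point construction for the converse is precisely the ``solving a system of linear equations'' step alluded to in the paper. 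The only point worth being explicit about is that the simplex $\conv(A^+)$ need not be full-dimensional in $\R^n$, so the linear system for $\xb^{\ast}$ may be underdetermined; but linear independence of the differences $\alpb-\alpb_0$ still guarantees solvability, as you state.
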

The invariant $\Theta_f$ is called the \struc{circuit number} of $f$.
Note that if we had $\betab \in A^+$ and thus $A^- = \emptyset$ in the previous theorem, then $f$ was a sum of nonnegative exponential monomials, and as such also nonnegative.

\medskip

Every circuit function with $\betab \in A^-$ has a \emph{unique} extremal point, which is always a minimizer (in the polynomial case, this holds when restricting to the positive orthant).
This fact has been proven in \cite[Corollary 3.6]{Iliman:deWolff:Circuits} for the polynomial case, and in the signomial case it is a direct consequence of \cite[Theorem 3.4]{Craciun:Koeppl:Pantea:GlobalInjectivity}.

\begin{corollary}[{\cite{Iliman:deWolff:Circuits,Craciun:Koeppl:Pantea:GlobalInjectivity}}]
	Let $f$ be a circuit function supported on $A^+ \cup \set{\betab}$ as in \cref{eqn:SupportDecomposition}.
	Then $f$ has a unique minimizer.
\label{corollary:Minima}
\end{corollary}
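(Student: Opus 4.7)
The plan is to establish existence of a minimizer via coercivity, and then uniqueness via an analysis of the critical-point equation that exploits the minimal affine dependence defining the circuit.

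For existence, I would first exploit that $\betab$ lies in the strict interior of $\conv(A^+)$, so that $0$ lies in the strict interior of $\conv\{\alpb - \betab : \alpb \in A^+\}$. Combined with the factorization
\begin{align*}
	f(\xb) \ = \ e^{\langle \betab, \xb\rangle}\left( \sum_{\alpb \in A^+} c_{\alpb} e^{\langle \alpb - \betab, \xb\rangle} - |c_{\betab}| \right)
\end{align*}
and the fact that the bracketed, convex sum-of-exponentials is bounded below by $\Theta_f \geq |c_{\betab}|$ via \cref{theorem:NonnegativeCircuits} (applied after renormalization) and grows to $+\infty$ in every direction in which some $\langle \alpb - \betab, \xb\rangle$ is unbounded above, this yields coercivity of $f$ on $\R^n$, so the infimum is attained at some $\xb^* \in \R^n$ (modulo invariance directions along which $f$ is identically constant, which are trivial under the standing nondegeneracy assumption that $A$ linearly spans $\R^n$).

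For uniqueness, I would analyze the critical-point equation $\nabla f(\xb^*) = 0$, which reads
\begin{align*}
	\sum_{\alpb \in A^+} c_{\alpb}\, \alpb\, e^{\langle \alpb, \xb^*\rangle} \ = \ |c_{\betab}|\, \betab\, e^{\langle \betab, \xb^*\rangle}.
\end{align*}
Setting $u_{\alpb} := c_{\alpb} e^{\langle \alpb, \xb^*\rangle}$ for $\alpb \in A^+$ and $u_{\betab} := |c_{\betab}| e^{\langle \betab, \xb^*\rangle}$, and substituting the barycentric representation $\betab = \sum_{\alpb \in A^+} \lambda_{\alpb} \alpb$, this rewrites as $\sum_{\alpb \in A^+}(u_{\alpb} - \lambda_{\alpb} u_{\betab})\, \alpb = 0$. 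Affine independence of $A^+$, combined with the log-affine parameterization identity $\prod_{\alpb \in A^+} (u_{\alpb}/c_{\alpb})^{\lambda_{\alpb}} = u_{\betab}/|c_{\betab}|$ (a direct consequence of $\alpb \mapsto \ln u_{\alpb} - \ln c_{\alpb}$ being linear in $\alpb$), then pins down all ratios $u_{\alpb}/u_{\alpb'}$, hence all differences $\langle \alpb - \alpb', \xb^*\rangle$, and therefore $\xb^*$ uniquely.

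The main obstacle is that $f$ itself is not convex, so uniqueness of the critical point cannot be deduced from standard convex-optimization arguments; the circuit's minimal affine dependence has to supply the missing rigidity, and carrying this out directly requires controlling the interplay between the linear critical-point equation and the multiplicative compatibility constraint. The cleanest direct route, and the one effectively taken in the cited references, is either to invoke the global injectivity theorem \cite[Theorem 3.4]{Craciun:Koeppl:Pantea:GlobalInjectivity} applied to the gradient map $\nabla f : \R^n \to \R^n$, which yields at-most-one critical point immediately, or to carry out the explicit log-change-of-variables argument of \cite[Corollary 3.6]{Iliman:deWolff:Circuits} in the polynomial case, which derives uniqueness directly from the equality case of the AM/GM inequality underlying \cref{theorem:NonnegativeCircuits}.
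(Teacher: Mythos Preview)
The paper does not give a self-contained proof of this corollary at all; the entire argument is the sentence preceding the statement, which records that the polynomial case is \cite[Corollary 3.6]{Iliman:deWolff:Circuits} and that the signomial case is a direct consequence of \cite[Theorem 3.4]{Craciun:Koeppl:Pantea:GlobalInjectivity}. Your closing paragraph invokes precisely these two references, so at that level your proposal and the paper coincide.

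Your preceding direct sketch, however, has two soft spots. For existence, you invoke $\Theta_f \ge |c_{\betab}|$, but the corollary does not assume $f$ is nonnegative; coercivity nonetheless holds (along any ray the dominant exponential has an exponent in $A^+$ because $\betab$ is in the strict interior of $\conv(A^+)$), just not via the inequality you quote. For uniqueness, the step ``affine independence of $A^+$ \dots\ pins down all ratios $u_{\alpb}/u_{\alpb'}$'' is not justified as written: from $\sum_{\alpb}(u_{\alpb}-\lambda_{\alpb}u_{\betab})\,\alpb=0$ you cannot conclude $u_{\alpb}=\lambda_{\alpb}u_{\betab}$ without the additional affine relation $\sum_{\alpb}(u_{\alpb}-\lambda_{\alpb}u_{\betab})=0$, and your ``log-affine parameterization identity'' is tautological (it merely encodes $\betab=\sum_{\alpb}\lambda_{\alpb}\alpb$) and imposes no new constraint on $\xb^*$. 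This residual one-parameter freedom in the linear critical-point relation is exactly what makes the uniqueness non-obvious and why the injectivity theorem from \cite{Craciun:Koeppl:Pantea:GlobalInjectivity} is genuinely needed; your own final paragraph correctly identifies this as the cleanest route, which is also all the paper does.
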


We define the subset of $\NonnegCone$ containing all sums of nonnegative circuit (SONC) functions.

\begin{definition}[SONC Cone]
	Let $A \subset \R^n$ be a finite set. 
	The \struc{SONC cone} $\struc{\ASONC}$ is the cone of all exponential sums that can be written as sums of nonnegative circuit functions 
	with support in $A$. 
\label{definition:SONCCone}
\end{definition}

Whenever we restrict ourselves to SONC functions with fixed sign distributions, we consider the following intersection of $\ASONC$ with a specific orthant.

\begin{definition}[Signed SONC Cone]
	Let $A \subset \R^n$ be a finite set.
	Fix an arbitrary orthant of $\R^A$ yielding a decomposition $A = A^+ \cup A^-$ in the sense of \cref{eqn:SupportDecomposition}. 
	The \struc{signed SONC cone} $\struc{\signedSONC}$ is the cone of all exponential sums that can be written as sums of 
	nonnegative circuit functions 
	with support in $A^+ \cup A^-$. 
	We emphasize the special case $A^- = \{\betab\}$ as $\struc{\circuitSONC}$.
\label{definition:SignedSONCCone}
\end{definition}

It follows that the signed SONC cone can be written as the Minkowski sum
\begin{align}
	\signedSONC \ = \ \sum_{\betab \in A^-} \circuitSONC.
\label{eqn:PrimalMinkowski}
\end{align}

Describing the (signed) SONC cone does not require knowledge of explicit decompositions into circuit functions of the particular elements of the cone.
To see this, consider first the polytope
\begin{align}
	\struc{\Lambda(A^+, \betab)} \ = \ \left\{ \Vector{\lambda} \in \R_{\ge 0}^{A^+} \ : \ \betab = \sum_{\alpb \in A^+} \lambda_{\alpb} \alpb, \ \sum_{\alpb \in A^+} \lambda_{\alpb} = 1 \right\}.
\label{definition:LambdaPolytope}
\end{align}
Then the following theorem holds.

\begin{theorem}[{\cite[Theorem 2.7]{Katthaen:Naumann:Theobald:UnifiedFramework}}]  
	Let $A \subset \R^n$ be a fixed support set with a decomposition $A = A^+ \cup \{\betab\}$ as in \cref{eqn:SupportDecomposition}. 
	It holds that
	\begin{align*}
	\circuitSONC \ = \ \left\{\sum_{\alpb \in A^+} c_{\alpb} \expalpha + c_{\betab} \expbeta \ : \ 
	\begin{array}{c}
		\text{there exists } \Vector{\lambda} \in \Lambda(A^+, \betab) \text{ such that} \\
	\prod_{\alpb \in A^+: \lambda_{\alpb} >0} \left(\frac{c_{\alpb}}{\lambda_{\alpb}}\right)^{\lambda_{\alpb}} \ge - c_{\betab}
	\end{array}
	\right\}.
	\end{align*}
\label{theorem:SONCCone}	
\end{theorem}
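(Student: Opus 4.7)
The plan is to establish the two inclusions separately; let $R$ denote the right-hand side of the claimed equality.

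For the inclusion $\circuitSONC \subseteq R$, I will show that $R$ is a convex cone containing every generator of $\circuitSONC$. Nonnegative exponential monomials lie trivially in $R$. If $h$ is a nonnegative circuit function supported on a simplicial circuit $B \cup \{\betab\}$ with $B \subseteq A^+$ affinely independent and $\betab$ in the relative interior of $\conv(B)$, then by \cref{theorem:NonnegativeCircuits} the barycentric coordinates $\mu$ of $\betab$ with respect to $B$ satisfy $\prod_{\alpb \in B}(c_\alpb/\mu_\alpb)^{\mu_\alpb} \ge -c_\betab$; extending $\mu$ by zeros to all of $A^+$ provides the required $\lambda \in \Lambda(A^+, \betab)$.

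To see that $R$ is closed under addition, let $f_1, f_2 \in R$ with witnesses $\lambda^{(i)}$ and $t_i = -c_\betab^{(i)} > 0$. Setting $\tilde\lambda^{(i)} = t_i \lambda^{(i)}$, each witness inequality becomes $\phi(c^{(i)}, \tilde\lambda^{(i)}) \ge 0$, where $\phi(c, \mu) = \sum_\alpb \mu_\alpb \log(c_\alpb/\mu_\alpb)$. This function is jointly concave on $\R_{\ge 0}^{A^+}\times\R_{\ge 0}^{A^+}$ (as the negative of the Kullback--Leibler divergence) and positively homogeneous of degree one, hence superadditive. Therefore $\phi(c^{(1)}+c^{(2)}, \tilde\lambda^{(1)}+\tilde\lambda^{(2)}) \ge 0$, which reads back as the witness inequality for $f_1+f_2$ with combined $\lambda = (\tilde\lambda^{(1)}+\tilde\lambda^{(2)})/(t_1+t_2) \in \Lambda(A^+, \betab)$. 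Edge cases with $t_i \le 0$ reduce to this by first absorbing nonnegative-monomial summands; closure under positive scalars is immediate, so $R$ is a convex cone and this direction follows.

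For the reverse inclusion $R \subseteq \circuitSONC$, let $f \in R$ with witness $\lambda$. If $c_\betab \ge 0$, then $f$ is a sum of nonnegative exponential monomials. Otherwise, let $A^+_\lambda = \{\alpb \in A^+ : \lambda_\alpb > 0\}$ and apply the weighted AM/GM inequality with weights $\lambda_\alpb$ to the positive monomials to obtain, for every $\xb \in \R^n$,
\begin{align*}
\sum_{\alpb \in A^+_\lambda} c_\alpb \expalpha \ \ge \ \prod_{\alpb \in A^+_\lambda}\!(c_\alpb/\lambda_\alpb)^{\lambda_\alpb} \expbeta \ \ge \ -c_\betab \expbeta.
\end{align*}
Hence $g = \sum_{\alpb \in A^+_\lambda} c_\alpb \expalpha + c_\betab \expbeta$ is nonnegative and $f$ equals $g$ plus the nonnegative monomials indexed by $A^+ \setminus A^+_\lambda$. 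If $A^+_\lambda$ is affinely independent, then $g$ is itself a nonnegative circuit function, completing the SONC decomposition.

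The main obstacle is the remaining case, where $A^+_\lambda$ is affinely dependent: here $g$ is nonnegative but not supported on a simplicial circuit. The plan is to write $\lambda|_{A^+_\lambda}$ as a convex combination $\sum_k w_k \lambda^{(k)}$ of vertices of the polytope $\Lambda(A^+_\lambda, \betab)$ (each having affinely independent support), and to distribute the coefficients as $c^{(k)}_\alpb = c_\alpb \cdot w_k \lambda^{(k)}_\alpb/\lambda_\alpb$ together with $c^{(k)}_\betab = w_k c_\betab$, so that $g = \sum_k g_k$ with each $g_k$ supported on a simplicial circuit. The technical core is then to verify, using the defining inequality for $\lambda$ and the joint concavity of $\phi$, that every resulting $g_k$ satisfies its own circuit-number inequality and hence lies in $\circuitSONC$ by \cref{theorem:NonnegativeCircuits}; alternatively one can bypass this redistribution step by invoking the known coincidence of $\NonnegCone$ and $\circuitSONC$ on this restricted support, tracing back to Reznick's agiforms and the AGE--SONC equivalence in the SAGE literature.
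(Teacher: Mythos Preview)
The paper does not give its own proof of this statement; it is quoted as a preliminary result from \cite{Katthaen:Naumann:Theobald:UnifiedFramework}. So there is no ``paper's proof'' to compare against, and your attempt has to be judged on its own.

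Your forward inclusion $\circuitSONC \subseteq R$ is correct: the superadditivity argument via the joint concavity and $1$-homogeneity of the relative-entropy function $\phi(c,\mu)=\sum_{\alpb}\mu_{\alpb}\log(c_{\alpb}/\mu_{\alpb})$ is exactly the right tool and cleanly shows that $R$ is a convex cone containing all generators.

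The reverse inclusion, however, has a genuine gap in the ``main obstacle'' case. Your redistribution $c_{\alpb}^{(k)}=c_{\alpb}\,w_k\lambda_{\alpb}^{(k)}/\lambda_{\alpb}$ does sum back to $g$, but the resulting pieces need \emph{not} be nonnegative for an arbitrary witness $\Vector{\lambda}$. Writing $d_{\alpb}=c_{\alpb}/\lambda_{\alpb}$, the nonnegativity of $g_k$ reduces to $\sum_{\alpb}\lambda_{\alpb}^{(k)}\log d_{\alpb}\ge\log|c_{\betab}|$; but this is a \emph{linear} functional of $\lambda^{(k)}$, and you only know that its $w_k$-average over the vertices is $\ge\log|c_{\betab}|$. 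Individual vertices can fall below. Concretely, with $A^+=\{(0,0),(3,0),(0,3),(3,3)\}$, $\betab=(1,1)$, coefficients $(1,1,1,4)$, inner coefficient $c_{\betab}\approx-4.18$ and witness $\Vector{\lambda}=(\tfrac12,\tfrac16,\tfrac16,\tfrac16)$, the witness inequality holds, yet your $g_1$ (on the triangle vertex $\lambda^{(1)}=(\tfrac13,\tfrac13,\tfrac13,0)$) fails its circuit-number test. The concavity of $\phi$ goes the wrong way to rescue this decomposition.

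The fix is not to invoke the AGE--SONC equivalence (which is at least as deep as the statement you are proving) but to choose $\Vector{\lambda}$ \emph{optimally}, i.e.\ to maximise $\sum_{\alpb}\lambda_{\alpb}\log(c_{\alpb}/\lambda_{\alpb})$ over $\Lambda(A^+,\betab)$. The first-order optimality conditions then force $\log(c_{\alpb}/\lambda_{\alpb})$ to be an affine function of $\alpb$ on the support of $\Vector{\lambda}$, so the linear functional $\mu\mapsto\sum_{\alpb}\mu_{\alpb}\log d_{\alpb}$ is \emph{constant} on $\Lambda(A^+,\betab)$, and in particular equal at every vertex $\lambda^{(k)}$. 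With that choice your redistribution produces genuinely nonnegative circuit functions and the argument closes. You should make this optimality step explicit rather than leaving it as ``the technical core to verify''.
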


This independence of an explicit decomposition into circuit functions motivates the following definition.

\begin{definition}[{\AGE}s]
	Let $A \subset \R^n$ be a fixed support set with a decomposition $A = A^+ \cup \{ \betab \}$ as in \cref{eqn:SupportDecomposition}.
	We call exponential sums $f$ of the form 
	\begin{align*}
		f(\xb) \ = \ \sum_{\alpb \in A^+} c_{\alpb} \expalpha + c_{\betab} \expbeta
	\end{align*}
	\struc{{\AGE}s}.
\end{definition}

Note that circuit functions are a special case of {\AGE}s.
The concept of studying {\AGE}s and \struc{sums of nonnegative {\AGE}s (SAGE)} under the viewpoint of signomial optimization was initiated by Chandrasekaran and Shah \cite{Chandrasekaran:Shah:SAGE-REP}, and was developed further in \cite{Murray:Chandrasekaran:Wierman:NewtonPolytopes,Murray:Chandrasekaran:Wierman:SigOptREP} by Chandrasekaran, Murray, and Wiermann; see also further developments by Katth\"an, Naumann, and Theobald in \cite{Katthaen:Naumann:Theobald:UnifiedFramework} using the name \struc{$\cS$-cone}. 

\medskip

In fact, the SONC and SAGE cones describe the same object.
For the special case of \struc{agiforms}, this has already been proven by Reznick in \cite{Reznick:AGI} in 1989.
Agiforms are AGE functions whose coefficient vector is contained in $\Lambda(A^+, \betab)$ up to a positive scalar multiple.
Wang implicitly showed the equality of the SONC and SAGE cones in \cite{Wang:nonnegative}.
The first explicit statement of the ``SONC = SAGE" result was given independently shortly after in \cite{Murray:Chandrasekaran:Wierman:NewtonPolytopes}.
The same statement can also be derived from results in \cite{Katthaen:Naumann:Theobald:UnifiedFramework} and \cite{Forsgaard:deWolff:BoundarySONCcone}.

Moreover, every function in the SONC/SAGE cone $\ASONC$ can be decomposed into circuit functions or {\AGE}s, respectively, whose supports do not contain points outside the initial support set $A$. 
In the language of circuit polynomials, this was proven by Wang in \cite{Wang:nonnegative}.
The corresponding theorem for the setting of {\AGE}s was shown in \cite{Murray:Chandrasekaran:Wierman:NewtonPolytopes}, and implies the following statement as a special case.

\begin{theorem}[\cite{Wang:nonnegative,Murray:Chandrasekaran:Wierman:NewtonPolytopes}]
	Let $f \in \ASONC$. 
	Then $f$ has a representation as a sum of nonnegative circuit functions $f_k$ whose supports satisfy $\supp(f_k) \subseteq \supp(f)$.
\label{theorem:SONCSupportDecomposition}
\end{theorem}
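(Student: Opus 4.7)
The plan is to reduce the problem to decompositions into AGE functions via the SONC = SAGE equivalence, since AGE functions are more flexible than circuit functions in that their positive support need not form a circuit. Start from any SAGE decomposition $f = \sum_k g_k$ with $\supp(g_k) \subseteq A$ and each $g_k$ a nonnegative AGE function with negative exponent $\betab_k$ and positive support $A_k^+$, and define the set of spurious exponents $T = \bigl(\bigcup_k \supp(g_k)\bigr) \setminus \supp(f)$. If $T = \emptyset$, applying \cref{theorem:SONCCone} to each $g_k$ expresses it as a sum of nonnegative circuit functions with supports in $\supp(g_k) \subseteq \supp(f)$, completing the argument. Otherwise, I would proceed by induction on $|T|$, eliminating one spurious exponent per step.

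For the induction step, pick $\gamma \in T$. By definition of $T$ at least one $g_k$ has nonzero coefficient at $\gamma$, but the coefficient of $\gamma$ in $f$ vanishes, so the contributions must cancel. Since each AGE function has nonnegative coefficients everywhere except possibly at its unique negative exponent, cancellation at $\gamma$ forces a partition of the indices into $N_{\gamma} = \{k : \betab_k = \gamma\}$ and $P_{\gamma} = \{k : \gamma \in A_k^+\}$, with the combined positive contribution at $\gamma$ from $P_\gamma$ equaling in absolute value the combined negative contribution from $N_\gamma$. \cref{assumption:Vertices} applied to each $g_k \in N_\gamma$ forces $\gamma \in \conv(\supp(f))$ and, since $\gamma \notin \supp(f)$, in the relative interior of a face of $\conv(\supp(f))$ spanned by points of $\supp(f)^+$.

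The heart of the argument is the redistribution step. For each $g_k$ with $k \in N_\gamma$, write $\gamma$ as a convex combination $\sum_{\alpb \in A_k^+} \lambda^{(k)}_{\alpb} \alpb$ and apply the weighted AM/GM inequality as in \cref{theorem:NonnegativeCircuits} to bound the negative $\gamma$-monomial of $g_k$ by a positive combination of the monomials on $A_k^+$; this rewrites $g_k$ as a sum of nonnegative AGE functions no longer involving $\gamma$ as a negative exponent. For each $g_k$ with $k \in P_\gamma$, redistribute the positive $\gamma$-contribution by absorbing it into the newly produced monomials coming from the $N_\gamma$ side, which is possible because the total coefficients match exactly. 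The result is a new decomposition $f = \sum_j \tilde g_j$ into nonnegative AGE functions with $\gamma \notin \bigcup_j \supp(\tilde g_j)$, strictly reducing the size of the spurious set.

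The main obstacle is the combinatorial bookkeeping: one must check that each new AGE function is genuinely nonnegative (which reduces to an AM/GM-style circuit number computation, clean once the matching of coefficients is arranged with the correct barycentric weights) and, more delicately, that the redistribution does not reintroduce previously eliminated spurious exponents. A robust way to guarantee the latter is to process the elements of $T$ in an order compatible with the face lattice of $\conv(\supp(f))$, so that a redistribution performed in the relative interior of a face only produces monomials on the vertices of that face, never on lower-dimensional faces already treated.
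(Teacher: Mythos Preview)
The paper does not give its own proof of \cref{theorem:SONCSupportDecomposition}; it is stated as a background result with citations to \cite{Wang:nonnegative} and \cite{Murray:Chandrasekaran:Wierman:NewtonPolytopes}. So there is nothing in the paper to compare your argument against directly. Your overall strategy---start from a SAGE decomposition with supports in $A$, then eliminate spurious exponents one at a time---is the right idea and is essentially the approach taken in the cited references.

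That said, your redistribution step has a genuine gap as written. You say that for $k\in N_\gamma$ you ``apply the weighted AM/GM inequality \dots\ to bound the negative $\gamma$-monomial \dots; this rewrites $g_k$ as a sum of nonnegative AGE functions no longer involving $\gamma$.'' An inequality does not give a rewriting; bounding the negative term from above does not produce an identity for $g_k$. In fact the roles of $N_\gamma$ and $P_\gamma$ are reversed relative to how the argument actually runs. The functions $g_j$ with $j\in N_\gamma$ are easy: their positive part $g_j^+ := g_j - c^{(j)}_\gamma e^{\langle\xb,\gamma\rangle}$ is a sum of nonnegative monomials, and nonnegativity of $g_j$ says precisely $g_j^+ \ge |c^{(j)}_\gamma|\, e^{\langle\xb,\gamma\rangle}$. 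The real work is on the $P_\gamma$ side, where removing the positive $\gamma$-term from $g_k$ may destroy nonnegativity. The fix is to \emph{replace} that term by an appropriate share of the $g_j^+$: setting $C=\sum_{j\in N_\gamma}|c^{(j)}_\gamma|=\sum_{k\in P_\gamma} c^{(k)}_\gamma$ and
\[
\tilde g_k \ := \ \bigl(g_k - c^{(k)}_\gamma e^{\langle\xb,\gamma\rangle}\bigr) + \frac{c^{(k)}_\gamma}{C}\sum_{j\in N_\gamma} g_j^+,
\]
one checks $\tilde g_k \ge g_k \ge 0$, each $\tilde g_k$ is an AGE with the same inner point $\betab_k$, $\gamma\notin\supp(\tilde g_k)$, and $\sum_{k\in P_\gamma}\tilde g_k = \sum_{k\in P_\gamma} g_k + \sum_{j\in N_\gamma} g_j$. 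This is the substitution you need, and it is an identity, not an AM/GM bound.

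A second, smaller point: with the correct redistribution above, the new supports satisfy $\supp(\tilde g_k)\subseteq \bigcup_\ell \supp(g_\ell)\setminus\{\gamma\}$, so no previously eliminated exponent can reappear and no face-lattice ordering is required. Your proposed ordering is harmless but unnecessary once the redistribution is done properly.
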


\subsection{The dual SONC cone}

In this subsection we study the dual of the SONC cone $\signedSONC$.
This cone was previously studied in, e.g., \cite{Katthaen:Naumann:Theobald:UnifiedFramework,Dressler:Heuer:Naumann:deWolff:DualSONCLP}.
The following theorem gives three different representations of the dual SONC cone using the \struc{natural duality pairing}
\begin{align*}
	\struc{\Vector{v}(f)} \ = \ \sum_{\alpb \in A^+} v_{\alpb} c_{\alpb} +\sum_{\betab \in A^-} v_{\betab} c_{\betab} \ \in \ \R,	
\end{align*}
where $\Vector{v}(\cdot) \in \check \R^A$.

\begin{theorem}[The dual SONC cone; \cite{Dressler:Heuer:Naumann:deWolff:DualSONCLP}]
	Let $A= A^+\cup A^-$ as in \cref{eqn:SupportDecomposition}.
	Then the following sets are equal.

	\renewcommand{\arraystretch}{1.3}
	\begin{align*}
		\begin{aligned}[c]
			(1) \qquad & \struc{\dualsignedSONC}  & \ = 
			& \; \left\{ \; \Vector{v} \in \check \R^A \ : \ \Vector{v}(f) \geq 0 \text{ for all } f \in \signedSONC \; \right\}, \\ 
			(2) \qquad & & 
			& \; \left\{ \; \Vector{v} \in \check \R^A \ : \ 
			\begin{array}{c}
				v_{\alpb} \ge 0 \text{ for all } \; \alpb \in A^+ \text{, and } \\ \ln (|v_{\betab}|) \ \le \ \sum\limits_{\alpb \in A^+} \lambda_{\alpb} \ln(v_{\alpb}) \\ \text{ for all } \betab \in A^-, 
				\text{ and all } \; \Vector{\lambda} \in \Lambda(A^+, {\betab}) \;
			\end{array}
			\right\} \\
			(3)\qquad & & 
			& \; \left\{\Vector{v} \in \check \R^A \ : \ 
			\begin{array}{c}
				v_{\alpb} \ge 0 \text{ for all } \; \alpb \in A^+ \text{, and} \\
				\text{ for all } \; \betab \in A^- 
				\text{ there exists } \Vector{\tau} \in \R^n \text{ such that } \\ \ln \left( \frac{| v_{\betab}|}{v_{\alpb}} \right) \ \le \ (\alpb - \betab)\T \Vector{\tau} \; \text{ for all } \; {\alpb} \in A^+\; 
			\end{array}
			\right\} 
		\end{aligned}
	\end{align*}
\label{theorem:DualCone}
\end{theorem}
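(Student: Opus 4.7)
The plan is to use the Minkowski decomposition $\signedSONC = \sum_{\betab \in A^-} \circuitSONC$ from \cref{eqn:PrimalMinkowski} to reduce to the single-circuit case. Dualization turns the Minkowski sum into an intersection, $\dualsignedSONC = \bigcap_{\betab \in A^-} \check{\circuitSONC}$, and the conditions in (2) and (3) are already phrased as intersections over $\betab \in A^-$, so it suffices to establish the three equivalences when $A^- = \{\betab\}$.

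For $(1)\Leftrightarrow(2)$ in the single-circuit case, I would combine \cref{theorem:SONCCone} with the weighted AM/GM inequality. Setting $c_{\betab} = 0$ and varying $c_{\alpb} \ge 0$ freely forces $v_{\alpb} \ge 0$ for every $\alpb \in A^+$. Since the single-point circuit function $c_{\betab}\expbeta$ with $c_{\betab} > 0$ itself lies in $\circuitSONC$, setting all $c_{\alpb} = 0$ and taking $c_{\betab}$ arbitrarily large positive forces $v_{\betab} \ge 0$, so $|v_{\betab}| = v_{\betab}$. At the extremal negative boundary $c_{\betab} = -\prod_{\alpb}(c_{\alpb}/\lambda_{\alpb})^{\lambda_{\alpb}}$ from \cref{theorem:SONCCone}, the pairing $\Vector{v}(f) \ge 0$ reduces to
\begin{align*}
	\sum_{\alpb \in A^+} v_{\alpb} c_{\alpb} \;\ge\; v_{\betab}\prod_{\alpb \in A^+}\left(\frac{c_{\alpb}}{\lambda_{\alpb}}\right)^{\lambda_{\alpb}},
\end{align*}
to be verified for all $\Vector{\lambda} \in \Lambda(A^+, \betab)$ and $c_{\alpb} > 0$. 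Applying weighted AM/GM with weights $\Vector{\lambda}$ to the left-hand side rewrites this as $\prod_{\alpb} v_{\alpb}^{\lambda_{\alpb}} \ge v_{\betab}$, i.e., $\ln|v_{\betab}| \le \sum_{\alpb}\lambda_{\alpb}\ln v_{\alpb}$. The choice $c_{\alpb} := \lambda_{\alpb}/v_{\alpb}$ attains AM/GM equality and shows that this condition is also sufficient.

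For $(2)\Leftrightarrow(3)$, I would apply Sion's minimax theorem (equivalently, classical LP duality). The condition in (3) is
\begin{align*}
	\sup_{\Vector{\tau} \in \R^n}\min_{\alpb \in A^+}\bigl[(\alpb - \betab)\T\Vector{\tau} + \ln v_{\alpb}\bigr] \;\ge\; \ln|v_{\betab}|.
\end{align*}
Replacing the finite inner $\min$ by a minimum over the probability simplex on $A^+$ makes the objective bilinear in $\Vector{\tau}$ and the simplex weights, so the minimax theorem exchanges $\sup$ and $\inf$. The inner $\sup$ over $\Vector{\tau}$ of $\sum_{\alpb}\lambda_{\alpb}\bigl[(\alpb-\betab)\T\Vector{\tau} + \ln v_{\alpb}\bigr]$ equals $+\infty$ unless the coefficient of $\Vector{\tau}$ vanishes, that is, unless $\sum_{\alpb}\lambda_{\alpb}\alpb = \betab$; under this constraint the value is $\sum_{\alpb}\lambda_{\alpb}\ln v_{\alpb}$. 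Thus the saddle value reduces to $\min_{\Vector{\lambda}\in\Lambda(A^+,\betab)}\sum_{\alpb}\lambda_{\alpb}\ln v_{\alpb}$, and requiring this to be at least $\ln|v_{\betab}|$ is exactly (2).

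The main obstacle will be boundary behavior when some $v_{\alpb}$ or $|v_{\betab}|$ equals zero and the logarithms become $-\infty$; in such cases AM/GM equality may only be approached in the limit. These situations are handled via the conventions on $\ln 0$ and $\ln(x/0)$ fixed in the preliminaries together with a closedness argument for $\circuitSONC$. Secondary care is needed in the minimax step to ensure the $\sup$ is (approximately) attained; this is automatic since $A^+$ is a simplex with $\betab$ in its relative interior, so $\{\alpb - \betab\}_{\alpb \in A^+}$ positively spans $\R^n$ and the domain of $\Vector{\tau}$ is genuinely $n$-dimensional.
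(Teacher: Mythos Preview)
The paper does not prove \cref{theorem:DualCone}; it is quoted verbatim from \cite{Dressler:Heuer:Naumann:deWolff:DualSONCLP} and used as a black box (e.g.\ in \cref{corollary:DualCircuitNumber}, \cref{corollary:Boundary}). There is therefore no in-paper argument to compare against.

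On its own merits your outline is sound and is essentially the argument one finds in the cited source: reduce via \cref{eqn:PrimalMinkowski} (equivalently \cref{corollary:DualCone}) to the case $A^-=\{\betab\}$, dualize the description of $\circuitSONC$ in \cref{theorem:SONCCone} through the weighted AM/GM inequality to obtain $(1)\Leftrightarrow(2)$, and pass from $(2)$ to $(3)$ by LP/Sion duality for the bilinear form $(\Vector{\lambda},\Vector{\tau})\mapsto\sum_{\alpb}\lambda_{\alpb}\bigl[(\alpb-\betab)^{\top}\Vector{\tau}+\ln v_{\alpb}\bigr]$. Two small corrections. First, the closing sentence asserts that ``$A^+$ is a simplex with $\betab$ in its relative interior''; this holds only when the support is itself a circuit, not in the general AGE situation of \cref{theorem:SONCCone}, and the vectors $\alpb-\betab$ need not span $\R^n$. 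Fortunately your minimax step does not use this: compactness of the probability simplex on $A^+$ together with bilinearity already gives Sion's theorem, and attainment of the $\sup_{\Vector{\tau}}$ is irrelevant. Second, in the boundary case $v_{\alpb}=0$ for some $\alpb$ with $\lambda_{\alpb}>0$, the AM/GM equalizer $c_{\alpb}=\lambda_{\alpb}/v_{\alpb}$ is unavailable; here one should send $c_{\alpb}\to\infty$ with the remaining coefficients fixed to force $v_{\betab}=0$, after which the conventions $\ln 0=-\infty$ reconcile (2) and (3) as you indicate.
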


Another way of representing the dual SONC cone is given by the following corollary.

\begin{corollary}[{\cite[Corollary 3.4]{Dressler:Heuer:Naumann:deWolff:DualSONCLP}}]
	Let $A = A^+ \cup A^-$ be a finite support set such that $A^-$ is nonempty.
	Then it holds that
	\begin{align*}
		\signedSONC \ = \ \sum\limits_{\betab \in A^-} \circuitSONC \ , \qquad \text{ and } \qquad \dualsignedSONC \ = \ \bigcap\limits_{\betab \in A^-} \dualcircuitSONC.
	\end{align*}
	\label{corollary:DualCone}
\end{corollary}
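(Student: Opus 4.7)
My plan is to observe that the primal equality has in fact already been recorded as \cref{eqn:PrimalMinkowski}, as an immediate consequence of \cref{definition:SignedSONCCone}: every nonnegative circuit function supported in $A^+ \cup A^-$ has at most one exponent in $A^-$ (by \cref{definition:CircuitFunction}), so grouping a SONC decomposition of some $f \in \signedSONC$ according to this exponent yields the claimed Minkowski decomposition. The real content of the corollary is therefore the dual identity.

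For the dual equality, the plan is to apply the standard convex-analytic principle that, for convex cones $K_1,\ldots,K_r$ in a finite-dimensional real vector space,
\begin{align*}
	\left(K_1 + \cdots + K_r\right)^{\vee} \ = \ K_1^{\vee} \cap \cdots \cap K_r^{\vee},
\end{align*}
and to combine it with the first equality. To keep the paper self-contained, I would unravel the two inclusions directly in terms of the natural pairing of \cref{theorem:DualCone}(1). For ``$\subseteq$'', if $\Vector{v} \in \dualsignedSONC$ then $\Vector{v}(f) \geq 0$ for every $f \in \signedSONC$; since $\circuitSONC \subseteq \signedSONC$ for each $\betab \in A^-$ (take the other Minkowski summands in \cref{eqn:PrimalMinkowski} to be zero), $\Vector{v}$ pairs nonnegatively with every element of $\circuitSONC$, hence $\Vector{v} \in \dualcircuitSONC$ for every $\betab \in A^-$. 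For ``$\supseteq$'', suppose $\Vector{v}$ lies in the intersection and let $f \in \signedSONC$ be arbitrary; by the first equality, write $f = \sum_{\betab \in A^-} f_{\betab}$ with $f_{\betab} \in \circuitSONC$. Linearity of the pairing gives $\Vector{v}(f) = \sum_{\betab \in A^-} \Vector{v}(f_{\betab})$, and each summand is nonnegative by assumption; so $\Vector{v} \in \dualsignedSONC$.

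The hard part will be: essentially nothing. The argument is a routine unpacking of convex duality, and the only nontrivial input is \cref{eqn:PrimalMinkowski} itself. One small point worth flagging is that the decomposition $f = \sum f_{\betab}$ need not be unique, but this does not matter since we are only summing nonnegative quantities; likewise, one should check that all cones involved live in the same ambient space $\R^A$ with a common fixed sign distribution, which is the case by construction. No closedness hypothesis is required, because the two-sided direct argument bypasses the usual subtleties of $(K_1+K_2)^{\vee\vee}$.
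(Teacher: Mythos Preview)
Your argument is correct: the primal identity is exactly \cref{eqn:PrimalMinkowski}, and the dual identity is the standard fact that the dual of a Minkowski sum of convex cones is the intersection of the duals, which you unravel cleanly via the pairing. Note that the paper does not supply its own proof of this corollary at all --- it is simply quoted from \cite{Dressler:Heuer:Naumann:deWolff:DualSONCLP} --- so there is nothing to compare against; your write-up would serve perfectly well as a self-contained justification.
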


To verify containment in the dual SONC cone $\dualsignedSONC$, it is sufficient to investigate the minimal circuits $\cCmin(A)$.

\begin{theorem}[{\cite[Theorem 3.5]{Katthaen:Naumann:Theobald:UnifiedFramework}}]
	Consider a finite set $A = A^+ \cup A^- \subset \R^n$ and a linear functional $\Vector{v} \in \check\R^A$.
	Then $\Vector{v} \in \dualsignedSONC$ if and only if $v_{\alpb} \ge 0 \ $ for all $\alpb \in A^+$, and for every circuit $C^+ \cup \set{\betab} \in \cC(A)$ the inequality
	\begin{align}
		\ln(v_{\betab}) \ \le \ \sum_{\alpb \in C^+} \lambda_{\alpb} \ln(v_{\alpb})
	\label{eqn:DualContainment}
	\end{align}
	holds for the unique $\Vector{\lambda} \in \Lambda(C^+, \betab)$.
	This in turn is equivalent to the condition that \cref{eqn:DualContainment} holds for every \emph{minimal} circuit $C^+ \cup \set{\betab} \in \cCmin(A)$.
\label{theorem:ReducedDualContainment}
\end{theorem}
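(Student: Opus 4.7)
My plan is to prove the two equivalences sequentially. The first reduction, from the infinite family of inequalities in Theorem~\ref{theorem:DualCone}(2) to a single inequality per simplicial circuit, is a clean consequence of LP vertex theory. The second reduction, from simplicial to minimal, proceeds by an inductive star-subdivision argument.

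Starting from Theorem~\ref{theorem:DualCone}(2), $\Vector{v} \in \dualsignedSONC$ iff $v_{\alpb} \ge 0$ for all $\alpb \in A^+$ and, for each $\betab \in A^-$, the inequality $\ln|v_{\betab}| \le \sum_{\alpb \in A^+} \lambda_{\alpb} \ln v_{\alpb}$ holds for every $\Vector{\lambda} \in \Lambda(A^+, \betab)$. The right-hand side is linear in $\Vector{\lambda}$ over the bounded polyhedron $\Lambda(A^+, \betab)$, so its minimum is attained at a vertex. By the standard LP characterization of basic feasible solutions, $\Vector{\lambda}$ is a vertex of $\Lambda(A^+, \betab)$ iff its support $C^+ := \set{\alpb \in A^+ : \lambda_{\alpb} > 0}$ is affinely independent; combined with $\betab \in \conv(C^+)$ this means $C^+ \cup \set{\betab}$ is a simplicial circuit in $\cC(A)$ and $\Vector{\lambda}$ is the unique element of $\Lambda(C^+, \betab)$. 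This yields the first equivalence.

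For the second equivalence I would show that if \cref{eqn:DualContainment} holds on $\cCmin(A)$ then it already holds on all of $\cC(A)$, by induction on $N(C) := \#\bigl((\conv(C^+) \setminus C^+) \cap A\bigr) - 1$, the number of extra $A$-points inside the simplex of $C = C^+ \cup \set{\betab}$ beyond $\betab$ itself. The base case $N(C) = 0$ is precisely minimality. For $N(C) \ge 1$, select an extra point $\gamma \in A$ lying in $\conv(C^+) \setminus (C^+ \cup \set{\betab})$, write $\gamma = \sum_{\alpb \in C^+} \mu_{\alpb} \alpb$ with $\Vector{\mu} \in \Lambda(C^+, \gamma)$, and star-subdivide $\conv(C^+)$ at $\gamma$. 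Then $\betab$ lies in some sub-simplex $\conv(T)$ with $T = (C^+ \setminus \set{\alpb^*}) \cup \set{\gamma}$, producing a new simplicial circuit $T \cup \set{\betab}$ with strictly smaller $N$. Substituting the barycentric expansion of $\gamma$ into that of $\betab$ yields the identity
\begin{align*}
	\sum_{\alpb \in T} \lambda'_{\alpb} \ln v_{\alpb} \ = \ \sum_{\alpb \in C^+} \lambda_{\alpb} \ln v_{\alpb} \ + \ \lambda'_{\gamma} \Bigl( \ln v_{\gamma} - \sum_{\alpb \in C^+} \mu_{\alpb} \ln v_{\alpb} \Bigr),
\end{align*}
so the inequality for the refined circuit (available by induction) can be transported back to the original one as long as the correction term can be signed correctly.

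The main obstacle is controlling the sign of the correction term $\ln v_{\gamma} - \sum_{\alpb \in C^+} \mu_{\alpb} \ln v_{\alpb}$. When $\gamma \in A^-$, the induction hypothesis applied to a simplicial circuit with $\gamma$ as interior point directly supplies the upper bound on $\ln |v_{\gamma}|$ in the needed direction. When $\gamma \in A^+$, no such upper bound follows from the dual-cone axioms alone, so one must either invoke a minimal circuit with $\gamma$ as interior (whose existence is forced by iterating the subdivision on a smaller face containing $\gamma$) or choose the extra point $\gamma$ more carefully so that the star subdivision places $\betab$ in a sub-simplex that does not use $\gamma$ as a vertex. The bookkeeping required to handle both sign cases and to verify that the induction always terminates is the key technical step of the proof.
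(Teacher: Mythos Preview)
The paper does not prove this theorem at all; it is quoted verbatim from \cite{Katthaen:Naumann:Theobald:UnifiedFramework} and used as a black box, so there is no in-paper argument to compare against. I can therefore only comment on the internal soundness of your outline.

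Your first reduction is correct: for each fixed $\betab\in A^-$ the map $\Vector{\lambda}\mapsto\sum_{\alpb}\lambda_{\alpb}\ln v_{\alpb}$ is linear on the polytope $\Lambda(A^+,\betab)$, its minimum is attained at a vertex, and vertices of $\Lambda(A^+,\betab)$ are precisely the barycentric coordinate vectors supported on an affinely independent subset of $A^+$, i.e., on simplicial circuits. That part is clean.

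The second reduction contains a concrete gap that you did not flag. Your induction parameter is $N(C)=\#\bigl((\conv(C^+)\setminus C^+)\cap A\bigr)-1$, which depends only on the vertex set $C^+$ and not on which interior point plays the role of $\betab$. Hence when $\gamma\in A^-$ the circuit $C^+\cup\{\gamma\}$ that you invoke to bound $\ln|v_{\gamma}|$ satisfies $N(C^+\cup\{\gamma\})=N(C^+\cup\{\betab\})$, so the induction hypothesis is \emph{not} available for it; your sentence ``the induction hypothesis \dots\ directly supplies the upper bound'' is therefore unjustified as written. Combined with the $\gamma\in A^+$ case (where you already acknowledge that no upper bound on $\ln v_{\gamma}$ is available), this means neither branch of the correction-term argument closes.

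The approach is salvageable, but it needs a stronger induction statement: for a fixed affinely independent $C^+\subseteq A$, prove the inequality simultaneously for \emph{every} interior point $\betab\in(\conv(C^+)\setminus C^+)\cap A^-$, inducting on $\#\bigl((\conv(C^+)\setminus C^+)\cap A\bigr)$. Then in the step you may pick $\gamma$ and treat both $\betab$ and $\gamma$ at once via the star subdivision, and the two inequalities needed (for $T\cup\{\betab\}$ with $\gamma$ as a new vertex, and for the analogous smaller circuit containing $\gamma$ when $\gamma\in A^-$) genuinely have strictly smaller parameter. This is essentially how the argument is organised in \cite{Reznick:AGI} and \cite{Katthaen:Naumann:Theobald:UnifiedFramework}; your outline has the right geometric idea but the bookkeeping of the induction must be tightened before it becomes a proof.
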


This reduction to the case of minimal circuits directly parallels analogous results for the primal SONC cone and other related objects, see e.g. \cite[Theorem 4]{Murray:Chandrasekaran:Wierman:NewtonPolytopes}, \cite[Theorem 2.4]{Forsgaard:deWolff:BoundarySONCcone}, or \cite[Corollary 8.11]{Reznick:AGI}.

\section{The DSONC Cone}
\label{section:DSONCIntroduction}

In this section we introduce the key object of this article.
We can use the dual SONC cone described in the previous section to define a new subcone of $\NonnegCone$, which we call the \struc{DSONC cone}.

Recall first, that we use the natural duality pairing $\Vector{v}(f) = \sum_{\alpb \in A} v_{\alpb} c_{\alpb}$, where $f = \sum_{\alpb \in A} c_{\alpb} \expalpha \in \ASONC$, to represent elements $\Vector{v}(\cdot)$ in the dual SONC cone $\dualASONC$. 
We can thus interpret $\Vector{v}$ as a vector $(v_{\alpb})_{\alpb \in A} \in \R^A$. 
For $A = A^+ \cup \set{\betab}$, we thus use the identification
\begin{align}
	g(\xb) \ = \ \sum_{\alpb \in A^+} v_{\alpb} \expalpha + v_{\betab} \expbeta
\label{eqn:Identification}
\end{align}
to associate {\AGE}s $g \in \R^A$ with elements $\Vector{v} \in \dualcircuitSONC$.
Using this, we can identify the cone $\dualcircuitSONC$ with the cone of all {\AGE}s of the form~\cref{eqn:Identification} that have a coefficient vector $\Vector{v} \in \dualcircuitSONC$. 
We refer to this cone by $\struc{\dualcircuitSONCfunctions}$. 
Together with \cref{corollary:DualCone}, this motivates the following definition.

\begin{definition}[The DSONC Cone]
	For every finite support set $A \subset \R^n$ we define the \struc{DSONC cone} as the set 
	\begin{align*}
		\struc{\ADSONC} \ = \ \cone \set{ f \text{ is \AGE } \ : \ \coeff(f) \in \dualASONC}
	\end{align*}
	containing sums of {\AGE}s whose coefficients are contained in the dual SONC cone.
	If we consider a fixed sign distribution $A = A^+ \cup A^-$, then we write $\struc{\signedDSONC}$ for the \struc{signed DSONC cone}.
\label{definition:DSONC}
\end{definition}

Note that for support sets of the special form $A = A^+ \cup \set{\betab}$ the signed DSONC cone equals precisely the dual of the SONC cone $\dualcircuitSONC$, up to identification of its elements with exponential sums. 
Hence, we write $\circuitDSONC$ for both objects.
In consequence, we can infer from \cref{definition:DSONC} that
\begin{align}
	\signedDSONC \ = \ \sum_{\betab \in A^-} \dualcircuitSONCfunctions.
	\label{eqn:DualMinkowski}
\end{align}
For arbitrary support sets, however, this direct identification of $\signedDSONC$ with $\dualsignedSONC$ no longer holds.

\begin{example}
	To illustrate the different behavior of elements in the dual SONC and DSONC cone, consider $A^+ = \left\{ \begin{pmatrix}0\\0\end{pmatrix}, \begin{pmatrix}4\\0\end{pmatrix}, \begin{pmatrix}0\\4\end{pmatrix}\right\}$, and $A^- = \set{\betab_k \ : \ k \in [m]}$ for some $m \in \N_{> 0}$, where
    \begin{align*}
        \betab_k \ = \ \frac{1}{k+1} \begin{pmatrix}4\\0\end{pmatrix} + \frac{1}{k+1} \begin{pmatrix}0\\4\end{pmatrix} + \left(1 - \frac{2}{k+1}\right) \begin{pmatrix}0\\0\end{pmatrix}.
    \end{align*}
	Associated to this fixed support set we investigate the vector $\Vector{c} = (1, 1, 1, -1, \ldots, -1) \in \R^A$ by first interpreting it as an element in the dual SONC cone $\dualsignedSONC$, and second as the coefficient vector of an exponential sum.

	To see that $\Vector{c}$ is contained in $\dualsignedSONC$ we need to check the conditions in \cref{theorem:DualCone} for each $\betab_k$. 
	Since every $\betab_k$ has the unique barycentric coordinates $\Vector{\lambda} = \left(\frac{1}{k+1}, \frac{1}{k+1}, 1 - \frac{2}{k+1} \right)$,
    \begin{align*}
        \ln(\abs{v_{\betab_k}}) \ = \ \ln(1) \ = \ 0 \ \le \ \frac{2}{k+1} \ln(1) + \left(1 - \frac{2}{k+1}\right) \ln(1) \ = \ 0
    \end{align*}
	holds for all $k \in [m]$, and $\Vector{c} \in \dualsignedSONC$ follows.

	If we take $\Vector{c}$ to be a coefficient vector, then the resulting exponential sum is
    \begin{align*}
        f \ = \ e^{4x_1} + e^{4x_2} + 1 - \sum_{k = 1}^m e^{\langle \xb, \betab_k \rangle}. 
    \end{align*}
	This function has $m$ negative terms, which means that for $m > 3$, $f$ is clearly not nonnegative and thus not contained in the primal SONC cone.
	Since the DSONC cone is always contained in the SONC cone, see \cref{corollary:DSONCinPrimal}, this already implies that $f$ cannot be contained in the DSONC cone. 

	The reason why the direct correspondence between $\signedDSONC$ and $\dualsignedSONC$ breaks for $\# A^- > 1$ is that containment in the dual SONC cone checks the containment criteria for each inner point $\betab$ independently. 
	This means that neither the number nor the combined weight of the negative terms are significant for containment in the dual SONC cone, but both of these attributes are highly relevant for studying nonnegative exponential sums, and DSONC functions as a special case thereof.
	\label{example:DualSONCNotDSONC}
\end{example}

It was shown in \cite[Proposition 3.6]{Dressler:Heuer:Naumann:deWolff:DualSONCLP} that every {\AGE} $f$ with coefficient vector in $\dualASONC$ is contained in the primal SONC cone and thus nonnegative. 
This implies the following corollary.
\begin{corollary}
	Let $A \subset \R^n$ be finite. 
	Then the DSONC cone $\ADSONC$ is contained in the primal SONC cone $\ASONC$. 
\label{corollary:DSONCinPrimal}
\end{corollary}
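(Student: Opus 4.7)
The plan is to chase the definitions and then invoke the cited proposition termwise. By \cref{definition:DSONC}, every $f \in \ADSONC$ admits a representation $f = \sum_{i=1}^k \mu_i g_i$ with scalars $\mu_i \ge 0$ and each $g_i$ an AGE whose coefficient vector lies in $\dualASONC$. The cited result \cite[Proposition 3.6]{Dressler:Heuer:Naumann:deWolff:DualSONCLP} asserts precisely that such an AGE is itself a SONC function, so $g_i \in \ASONC$ for every $i$.

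Since $\ASONC$ is by \cref{definition:SONCCone} a convex cone, it is stable under nonnegative linear combinations, and therefore the conic combination $f = \sum_i \mu_i g_i$ lies in $\ASONC$ as well. This is the content of the corollary. One could additionally invoke \cref{theorem:SONCSupportDecomposition} to guarantee that the underlying circuit decomposition stays within $A$, but this is not strictly needed for the containment statement.

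I do not anticipate any real obstacle; the argument is essentially a one-line invocation of \cite[Proposition 3.6]{Dressler:Heuer:Naumann:deWolff:DualSONCLP} combined with the elementary closure of $\ASONC$ under conic combinations. The only technicality worth noting is that $\coeff(g_i)$ should be interpreted as a vector in $\check \R^A$ by zero-padding outside $\supp(g_i)$, and that the logarithm conventions fixed in the notation section (together with the reduction to minimal circuits in \cref{theorem:ReducedDualContainment}) ensure that the dual SONC inequalities remain well-defined after this embedding. \cref{example:DualSONCNotDSONC} already shows that the reverse containment between $\signedDSONC$ and $\dualsignedSONC$ fails when $\#A^- > 1$, so no stronger statement than the corollary should be expected from this definitional argument.
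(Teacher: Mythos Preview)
Your proof is correct and follows essentially the same approach as the paper: both arguments invoke \cite[Proposition 3.6]{Dressler:Heuer:Naumann:deWolff:DualSONCLP} termwise on the AGE summands and then use closure of $\ASONC$ under conic combinations. The paper merely organizes the same idea through the signed Minkowski decomposition \cref{eqn:DualMinkowski} and \cref{eqn:PrimalMinkowski}, but the substance is identical.
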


Thus, containment in the DSONC cone is a certificate of nonnegativity. 

\begin{proof}
	Consider an arbitrary sign distribution $A = A^+ \cup A^- \subset \R^n$.
	If $A^-$ is empty, then $\signedDSONC$ only contains sums of nonnegative exponential monomials and the claim follows.
	Assume now that $A^- \ne \emptyset$.
	By \cref{eqn:DualMinkowski} we have that $\signedDSONC = \sum_{\betab \in A^-} \dualcircuitSONCfunctions$.
	According to \cite[Proposition 3.6]{Dressler:Heuer:Naumann:deWolff:DualSONCLP} it holds that 
	$\dualcircuitSONCfunctions \subset \circuitSONC$, and thus
	\begin{align*}
		\signedSONC \ = \ \sum\limits_{\betab \in A^-} \circuitSONC \ \supset \ \sum_{\betab \in A^-} \dualcircuitSONCfunctions \ = \ \signedDSONC \ .
	\end{align*}
	Since our choice for the sign distribution was arbitrary, the claim follows.
\end{proof}

A natural question at this point is to ask whether we can find an isomorphism between $\dualsignedSONC$ and $\signedDSONC$.
This is not the case, as for general support sets with $\# A^- > 1$ we have that for all $\bar{\betab} \in A^-$ it holds that
\begin{align}
	\signedDSONC \ = \ \sum_{\betab \in A^-} \circuitDSONC \ \supset \ \DSONC_{A^+, \bar{\betab}} \ \cong \ \dualSONC_{A^+, \bar{\betab}} \ \supset \ \bigcap_{\betab \in A^-} \dualcircuitSONC \ = \ \dualsignedSONC \ .
\label{eqn:InclusionChain}
\end{align}

\medskip

We close this section by emphasizing that the DSONC cone can equivalently be defined as the cone of all sums of circuit functions which are supported on a (minimal) circuit and have a coefficient vector that is contained in the dual SONC cone. 
This means that, as in the primal case, the DSONC cone has equivalent representations in terms of circuit and {\AGE}s.
The following corollary directly parallels the definition of the SONC cone given in \cref{definition:SignedSONCCone}, whose circuit-less representation is stated in \cref{theorem:SONCCone}.
Thus, \cref{corollary:DSONCisDSAGE} can be seen as the DSONC version of the ``SONC = SAGE'' result.

\begin{corollary}[``DSONC = DSAGE'']
	For any finite support set $A \subset \R^n$ it holds that
	\begin{align*}
		\ADSONC \ = \ \cone \set{ f \text{ is minimal circuit function } \ : \ \coeff(f) \in \dualASONC}. 
	\end{align*}
\label{corollary:DSONCisDSAGE}
\end{corollary}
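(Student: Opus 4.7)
The inclusion $\supseteq$ is immediate from \cref{definition:DSONC}: every minimal circuit function is, by design, a particular AGE, so any generator of the right-hand cone is a generator of $\ADSONC$.

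For the reverse inclusion, iterating \cref{eqn:DualMinkowski} and \cref{definition:DSONC} reduces the problem to showing that every single AGE $g(\xb)=\sum_{\alpb\in A^+}c_\alpb e^{\langle \alpb,\xb\rangle}+c_\betab e^{\langle \betab,\xb\rangle}$ with $\coeff(g)\in\dualASONC$ lies in the cone generated by minimal circuit functions with coefficient vector in $\dualASONC$. When $c_\betab=0$, $g$ is a sum of nonnegative exponential monomials supported on vertices of $\conv(A)$; each such monomial is a single-point minimal circuit function (per the convention in \cref{definition:Circuit}), and its zero-padded coefficient vector satisfies the conditions of \cref{theorem:ReducedDualContainment} vacuously, because no minimal circuit in $A$ can have a vertex of $\conv(A)$ as its interior point. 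When $c_\betab<0$, the strategy is to distribute the coefficients of $g$ across the minimal circuits $C^+_1\cup\{\betab\},\dots,C^+_m\cup\{\betab\}\in\cCmin(A)$ that contain $\betab$ as the interior point. For each such $C^+_k$ with unique barycentric coordinates $\Vector{\lambda}^{(k)}\in\Lambda(C^+_k,\betab)$, one seeks weights $t_k\ge 0$ with $\sum_k t_k=1$ and $\mu_{k,\alpb}\ge 0$ with $\sum_{k:\alpb\in C^+_k}\mu_{k,\alpb}=1$, so that the minimal circuit function $g_k$ with coefficients $\mu_{k,\alpb}c_\alpb$ on $C^+_k$ and $t_k c_\betab$ at $\betab$ satisfies the dual AM/GM on its supporting circuit. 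By the hypothesis $\coeff(g)\in\dualASONC$ and \cref{theorem:ReducedDualContainment}, we have $|c_\betab|\le\prod_{\alpb\in C^+_k}c_\alpb^{\lambda_\alpb^{(k)}}$, so this required condition collapses to the scalar inequality $t_k\le\prod_{\alpb\in C^+_k}\mu_{k,\alpb}^{\lambda_\alpb^{(k)}}$, which is feasible by a standard weighted-AM/GM argument applied to the simplex of admissible $(\mu_{k,\alpb})_k$ for each fixed $\alpb$. Vertices of $A^+$ not covered by any $C^+_k$ are vertices of $\conv(A)$ and are handled as single-point minimal circuit functions, as in the first case.

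The key verification, and the main technical obstacle, is to check that each $\coeff(g_k)\in\dualASONC$ after distribution: by \cref{theorem:ReducedDualContainment}, this amounts to checking the dual AM/GM inequality for \emph{every} minimal circuit in $A$, not merely for $C^+_k\cup\{\betab\}$. The inequality on $C^+_k\cup\{\betab\}$ itself holds by the weight construction above; for every other minimal circuit $C^+\cup\{\gammab\}\in\cCmin(A)$ with $\gammab\ne\betab$ the zero-padded coefficient vector has $\coeff(g_k)_\gammab=0$, making the left-hand side $\ln(0)=-\infty$ and the inequality vacuous. The delicate case is that of another circuit $C^+\cup\{\betab\}$ with $C^+\ne C^+_k$, whose inequality must collapse to the vacuous form $\ln|t_kc_\betab|\le-\infty$; this holds provided $C^+\not\subseteq C^+_k$, which is guaranteed because both $C^+$ and $C^+_k$ are vertex sets of full-dimensional simplices containing $\betab$ in their relative interiors, hence neither is a proper subset of the other. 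With this verification, $g$ decomposes as asserted, and the corollary follows.
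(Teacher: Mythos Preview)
The paper's proof is a single line invoking \cref{theorem:ReducedDualContainment}, and the argument it has in mind is considerably simpler than yours. Given an AGE $g=\sum_{\alpb\in A^+}c_\alpb e^{\langle\alpb,\xb\rangle}+c_\betab e^{\langle\betab,\xb\rangle}$ with $c_\betab<0$ and coefficients in the dual SONC cone, \cref{theorem:ReducedDualContainment} guarantees that the dual AM/GM inequality holds on \emph{every} minimal circuit $C^+\cup\{\betab\}$. Pick \emph{one} such circuit, set $h=\sum_{\alpb\in C^+}c_\alpb e^{\langle\alpb,\xb\rangle}+c_\betab e^{\langle\betab,\xb\rangle}$, and observe that $h$ is a minimal circuit function whose coefficients satisfy the dual circuit-number condition on its own support, while $g-h$ is a sum of nonnegative monomials (single-point circuits). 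No distribution of weights across multiple circuits is needed.

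Your multi-circuit splitting is therefore unnecessary machinery, and it also contains a genuine error. In the ``delicate case'' you assert that for a different minimal circuit $C^+\cup\{\betab\}$ with $C^+\ne C_k^+$, the required inequality ``collapses to the vacuous form $\ln|t_kc_\betab|\le -\infty$''. But this inequality is \emph{false}, not vacuous: the left-hand side is finite. So if one really had to verify the dual SONC condition for $\coeff(g_k)$ against every minimal circuit in the ambient support $A$, your construction would fail. The resolution is that the condition ``$\coeff(f)\in\dualASONC$'' in the corollary is to be read relative to the support (equivalently, the sign pattern) of $f$ itself, so for a minimal circuit function $g_k$ only the single circuit $C_k^+\cup\{\betab\}$ is in play and the delicate case never arises. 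Finally, your justification that distinct minimal circuits cannot be nested because they are ``full-dimensional simplices'' is off: circuits need not be full-dimensional, and the correct reason is simply that circuits are \emph{minimally} affinely dependent, so no circuit can properly contain another.
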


\begin{proof}
	This is an immediate consequence of \cref{theorem:ReducedDualContainment}.
\end{proof}

\section{Properties of the DSONC Cone}
\label{section:PropertiesDSONCCone}

As the DSONC cone is an object that, to the best of our knowledge, has not yet been studied in the existing literature, we establish its fundamental properties.
As we discover in this section, the DSONC cone demonstrates several structural similarities to the primal SONC cone.
We start with the following proposition.

\begin{proposition}
	The DSONC cone is a proper (closed, convex, pointed) full-dimensional cone.
	\label{proposition:DSONCconeProper}
\end{proposition}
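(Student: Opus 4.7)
The plan is to establish the five properties separately. Two are essentially free: $\ADSONC$ is a conic hull by \cref{definition:DSONC}, so it is immediately a convex cone. Pointedness follows from the containment $\ADSONC \subseteq \ASONC \subseteq \NonnegCone$ provided by \cref{corollary:DSONCinPrimal}, together with the pointedness of the nonnegativity cone (the zero exponential sum is the only function satisfying both $f\ge 0$ and $-f\ge 0$).

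For full-dimensionality, I would show that the open positive orthant of $\R^A$ is contained in $\ADSONC$. Any coefficient vector $\Vector{c}$ with strictly positive entries represents a function $f = \sum_{\alpb \in A} c_\alpb e^{\langle\alpb,\xb\rangle}$, which decomposes as a sum of its positive exponential monomials. Each such monomial is supported on the trivial single-point circuit $\{\alpb\}$; viewed as an AGE function it has empty negative support, so the dual SONC condition from \cref{theorem:DualCone} reduces to the trivially satisfied requirement $v_\alpb \ge 0$. Hence every strictly positive coefficient vector lies in $\ADSONC$, and since this subset is open in $\R^A$, the DSONC cone has nonempty interior.

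The main obstacle is closedness, since neither arbitrary conic hulls of closed sets nor Minkowski sums of closed cones are closed in general. My approach is to write
\[
\ADSONC \ = \ \sum_{\betab \in A} \dualcircuitSONCfunctions,
\]
where for $\betab$ in the relative interior of $\conv(A)$ the summand is the cone of AGE functions whose coefficient vector lies in $\dualcircuitSONC$, and for each vertex $\betab$ of $\conv(A)$ the summand is the positive ray $\R_{\ge 0}\cdot e^{\langle\betab,\xb\rangle}$. Each summand is a closed convex cone --- via identification with the closed dual cone $\dualcircuitSONC$ in the interior case, and trivially in the vertex case --- and all summands are contained in $\NonnegCone$. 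The key fact I would invoke is that whenever closed convex cones $C_1,\ldots,C_k$ in a finite-dimensional space all lie in a common pointed closed cone $C$, their Minkowski sum $C_1+\cdots+C_k$ is closed. The standard argument is that a linear functional $\ell$ with $\ell(x) \ge c\|x\|$ on $C$ (which exists by pointedness and closedness of $C$) bounds $\sum_i\|x_{i,n}\|$ along any convergent sequence $z_n = \sum_i x_{i,n} \to z$ with $x_{i,n}\in C_i$, so that one can extract convergent subsequences whose limits lie in the closed $C_i$ and sum to $z$. Applying this with $C = \NonnegCone$ completes the proof.
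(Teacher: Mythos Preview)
Your argument is correct, and in several places more careful than the paper's own proof.

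The paper treats properness (closed, convex, pointed) as ``clear by definition'' in a single sentence and spends its effort only on full-dimensionality. For that, it argues abstractly: since $\circuitSONC$ coincides with the nonnegativity cone when $A = A^+ \cup \{\betab\}$ and is therefore pointed, its dual $\dualcircuitSONC \cong \circuitDSONC$ is full-dimensional by general convex duality; the inclusion chain \eqref{eqn:InclusionChain} then handles general support sets. Your full-dimensionality argument is more elementary and direct: you simply observe that every strictly positive coefficient vector decomposes into positive monomials, each of which is a (degenerate) AGE with trivially satisfied dual constraints, so the entire open positive orthant sits inside $\ADSONC$. This avoids invoking the duality fact about pointed cones and is arguably cleaner.

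On closedness you go well beyond the paper. The paper's ``clear by definition'' is in fact not so clear, since conic hulls and Minkowski sums of closed sets need not be closed. Your use of the standard lemma --- that finitely many closed convex cones lying in a common closed pointed cone have closed Minkowski sum --- is exactly the right tool, and the recession/strictly-positive-functional argument you sketch is the standard proof. One minor point: your decomposition indexes over $\betab \in A$ and splits into ``relative interior'' and ``vertex'' cases; if $A$ happens to contain points on proper faces of $\conv(A)$ that are not vertices, you should say a word about them (they cannot serve as the distinguished interior point of a circuit function by \cref{definition:CircuitFunction}, so they contribute only a nonnegative monomial ray, as in the vertex case). This is cosmetic and does not affect the validity of your approach.
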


\begin{proof}
	It is clear by definition that the DSONC cone is proper.
	We now need to show that it is full-dimensional.
	For this, we first fix an arbitrary signed support set $A = A^+ \cup \set{\betab}$ and identify elements in the DSONC cone $\circuitDSONC$ and dual SONC cone $\dualcircuitSONC$ with (coefficient) vectors $\Vector{c} \in \R^A$.
	If the closure of any convex cone is pointed, then its dual is full-dimensional, see e.g. \cite[Section 2.13.2.1, pp. 130]{Dattorro:ConvexOpt}.
	Since the primal SONC cone $\circuitSONC$ equals the nonnegativity cone for the case $A = A^+ \cup \set{\betab}$, it is a proper, full-dimensional cone.
	Thus, its dual is full-dimensional.
	The dual SONC cone and the DSONC cone for support sets $A^+ \cup \set{\betab}$ describe the same object when using the above identification, so $\circuitDSONC$ itself is also full-dimensional in $\R^{A^+ \cup \set{\betab}}$. 
	For general support sets $A^+ \cup A^-$ with some fixed arbitrary sign distribution, $\dualsignedSONC$ is full-dimensional by analogous arguments, and by the inclusions given in \cref{eqn:InclusionChain} it follows that $\signedDSONC$ is full-dimensional as well. 
	Since the sign distribution was chosen arbitrarily, the claim follows.
\end{proof}

Now, we discuss important basic properties and the strong connection that exists between the DSONC cone and the primal SONC cone.
Moreover, we explore a key difference between these two objects; namely, that elements in the DSONC cone cannot have real zeros.
We proceed by giving a description of the extreme rays of the DSONC cone.
Next, we investigate how affine transformation of variables affects DSONC functions and, last, show that the DSONC cone is not closed under multiplication.

\subsection{Basic Properties of the DSONC Cone and its Relation to the SONC Cone}

First, we observe that, as in the primal case, see \cref{theorem:NonnegativeCircuits}, we can describe the DSONC cone in terms of an invariant which we call the \struc{dual circuit number}.

\begin{definition}
	Let $f(\xb) = \sum_{\alpb \in A^+} c_{\alpb} \expalpha \ + \ c_{\betab} \expbeta$ be a circuit function with $\supp(f) = A^ + \cup \set{\betab}$ and let $\Vector{\lambda}$ be the unique vector in $\Lambda(A^+, \betab)$.
	Then we associate to $f$ the \struc{dual circuit number}
	\begin{align*}
		\struc{\check{\Theta}_f} \ = \ \prod_{\alpb \in A^+} c_{\alpha}^{\lambda_{\alpb}}.
	\end{align*}
\label{definition:DualCircuitNumber}
\end{definition}

We now make the following observation. 

\begin{remark}
	For the special case where the support set $A = A^+ \cup \set{\betab} \subset \R^n$ is a circuit it follows from representation (2) in \cref{theorem:DualCone} that
	\begin{align*}
		\dualcircuitSONC \ = \ 
		\set{\Vector{v} \in \check\R^A \ : \ v_{\alpb} > 0 \text{ for all } \alpb \in A^+, v_{\betab} \le 0, \text{ and } \ \abs{v_{\betab}} \le \prod_{\alpb \in A^+} v_{\alpb}^{\lambda_{\alpb}}},
	\end{align*}
	where $\Vector{\lambda}$ denotes the unique vector in $\Lambda(A^+, \betab)$.
\end{remark}

\begin{corollary}
	Let $f(\xb) = \sum_{\alpb \in A^+} c_{\alpb} \expalpha \ + \ c_{\betab} \expbeta$ be a circuit function with $\supp(f) = A = A^+ \cup \set{\betab}$.
	Then $f$ is an element in the DSONC cone corresponding to $A$ if and only if 
	\begin{align}
		- c_{\betab} \ \le \ \check\Theta_f.
	\label{eqn:DualCircuitNumber}
	\end{align}
\label{corollary:DualCircuitNumber}
\end{corollary}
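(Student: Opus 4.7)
The plan is to unwind the definitions and directly apply the explicit description of $\dualcircuitSONC$ given in the Remark immediately preceding the corollary.

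First, I would recall that since $A = A^+ \cup \{\betab\}$ is a simplicial circuit, the paragraph following \cref{definition:DSONC} asserts that the signed DSONC cone coincides with the dual SONC cone $\dualcircuitSONC$ under the identification \cref{eqn:Identification} between coefficient vectors and exponential sums; both objects are written $\circuitDSONC$. Hence, membership of $f$ in the DSONC cone is equivalent to membership of $\coeff(f)$ in $\dualcircuitSONC$.

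Next, I would substitute the coefficient vector of the circuit function $f$ into the explicit description of $\dualcircuitSONC$ given in the preceding Remark, namely that $\Vector{v} \in \dualcircuitSONC$ if and only if $v_\alpb > 0$ for all $\alpb \in A^+$, $v_\betab \le 0$, and $|v_\betab| \le \prod_{\alpb \in A^+} v_\alpb^{\lambda_\alpb}$ for the unique $\Vector{\lambda} \in \Lambda(A^+, \betab)$. The positivity condition is automatic from \cref{definition:CircuitFunction}(2), and the sign condition on $v_\betab$ simply reflects the implicit assumption $\betab \in A^-$ encoded in the notation $A = A^+ \cup \{\betab\}$. The remaining inequality, rewritten using $|c_\betab| = -c_\betab$, is precisely $-c_\betab \le \check\Theta_f$, which is \cref{eqn:DualCircuitNumber}.

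In summary, the proof amounts to a translation exercise: the identification of $\circuitDSONC$ with $\dualcircuitSONC$ does all the heavy lifting, and the characterization of the dual cone via the product $\prod v_\alpb^{\lambda_\alpb}$ is exactly the defining expression of the dual circuit number $\check\Theta_f$. Accordingly, I do not anticipate any genuine obstacle. The minor subtlety that $c_\betab$ is sometimes allowed to be nonnegative in a circuit function (in which case $f$ is trivially in $\ADSONC$ as a sum of positive monomials, while $-c_\betab \le 0 \le \check\Theta_f$ holds vacuously) can be dispensed with as a brief aside if one wishes to cover the degenerate case explicitly.
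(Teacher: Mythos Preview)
Your proposal is correct and follows essentially the same route as the paper: both arguments reduce the claim to membership of the coefficient vector in $\dualcircuitSONC$ and then read off the inequality from representation~(2) of \cref{theorem:DualCone} (you cite the Remark that repackages this representation, while the paper cites the theorem directly and applies $\exp(\cdot)$). Your brief aside on the degenerate case $c_\betab \ge 0$ is harmless extra care that the paper does not spell out.
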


\begin{proof}
	Since $f$ is a circuit function, $\conv(A)$ is a simplex.
	Thus, the barycentric coordinates $\Vector{\lambda} \in \Lambda(A^+, \betab)$ are unique.
	By definition of the DSONC cone, $f$ is contained in $\circuitDSONC$ if and only if its coefficient vector is contained in $\dualcircuitSONC$.
	Representation (2) of the dual SONC cone in \cref{theorem:DualCone} yields that the coefficient vector $\Vector{c}$ corresponds to an element in $\dualcircuitSONC$ if and only if 
	\begin{enumerate}
		\item $c_{\alpb} \ \ge \ 0 \ $ for all $\alpb \in A^+$, and
		\item $\ln(|c_{\betab}|) \ \le \ \sum_{\alpb \in A^+} \lambda_{\alpb} \ln(c_{\alpb}) \ $ if $c_{\betab} \ < \ 0$.
	\end{enumerate}
	The claim follows after applying $\exp(\cdot)$ to the inequality in (2).
\end{proof}

\begin{example}
	Let $\struc{m_{c,d}(x, y)} = x^4y^2 + x^2y^4 - cx^2y^2 + d$ denote the (family of) \struc{(generalized) Motzkin polynomial(s)} with inner coefficient $c$ and constant term $d$.
	Then we build the corresponding exponential sum by considering $m_{c,d}(e^x, e^y)$.
	Note that $m_{c,d}(e^x, e^y)$ is
	\begin{enumerate}
		\item a circuit function whenever $d > 0$,
		\item unbounded when $d < 0$, or $d = 0$ and $c > 0$,
		\item a trivially nonnegative sum of nonnegative exponential monomials when $c < 0$ and $d \ge 0$.
	\end{enumerate}
	Let us thus assume that $c, d > 0$. 
	The dual circuit number condition of $m_{c,d}$ is given by
	\begin{align}
		c \ \le \ 1^{\frac{1}{3}} \cdot 1^{\frac{1}{3}} \cdot d^{\frac{1}{3}},
	\label{eqn:MotzkinCircuitNumber}
	\end{align}
	where we have used that the inner point $(2,2)$ is located in the barycenter of 
	\begin{align*}
		\conv\left(\set{(4,2), (2, 4), (0, 0)}\right),
	\end{align*} 
	i.e., the barycentric coordinates are $\Vector{\lambda} = (1/3, 1/3, 1/3)$.
	It is clear that any circuit function with coefficient vector of the form $(1, \ldots, 1, -1)$, assuming that the coefficient $-1$ corresponds to the inner term, trivially satisfies \cref{eqn:DualCircuitNumber} and is thus contained in the DSONC cone.

	Let us investigate the case $c = 3$.
	For the additional choice $d = 1$ we get the classical Motzkin polynomial.
	It is well-known that $m_{3,1}$ is a nonnegative polynomial which is contained in the boundary of the primal SONC cone.
	It is, however, not contained in the DSONC cone, because \cref{eqn:MotzkinCircuitNumber} becomes $3 \le 1$, which is false.
	We can make \cref{eqn:MotzkinCircuitNumber} feasible by varying $c$ and adjusting $d$ accordingly (or vice versa).
	The two feasible parameter choices $(c, d) = (3, 27)$ and $(c, d) = (1, 1)$ are displayed in~\cref{figure:Motzkins}.
	\label{example:Motzkin}
\end{example}

\begin{center}
	\begin{figure}[hbt]~
		\includegraphics[draft=false, width=.75\textwidth]{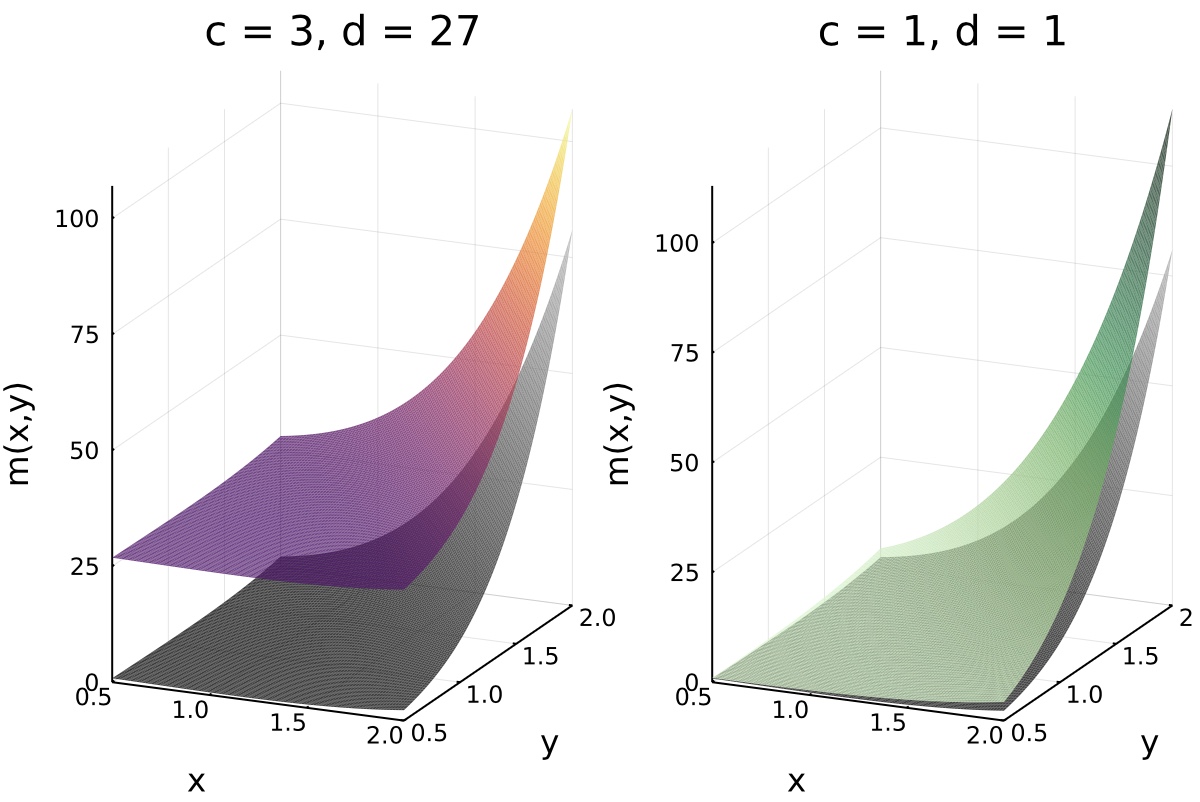}
		\caption{$m_{c,d}(x, y)$ for different parameters with $m_{3,1}(x,y)$ plotted in gray for reference.}
		\label{figure:Motzkins}
	\end{figure}
\end{center}

Observe that for every circuit function $f$ it holds that $\check{\Theta}_f < \Theta_f$, which once more illustrates the strict containment of the DSONC cone in the primal SONC cone. 
The same inference can also be made from \cref{corollary:DualBoundApprox}.
Indeed, we can write $\ADSONC$ for some finite support set $A = A^+ \cup A^-$ as
\begin{align*}
	\ADSONC \ = \ \cone \set{ f \text{ is circuit function } \ : \ -c_{\betab} \le \check\Theta_f }.
\end{align*}
We point out that from this representation one can directly observe that the cone $\circuitDSONC$ is closely related to an object called the \struc{power cone}, see e.g. \cite{Chares:PowerCone}. 
For $\Vector{\delta} \in \R^A_{> 0}$ with $\sum_{\alpb \in A} \delta_{\alpb} = 1$ and some finite $A \subset \R^n$, this object is defined as
\begin{align}
	\struc{\mathrm{P}_{\Vector{\delta}}} \ = \ \set{(c, d) \in \R^A_{> 0} \times \R\ : \ |d| \le \prod_{\alpb \in A} c_{\alpb}^{\delta_{\alpb}}}. \label{equation:PowerCone}
\end{align}
Specifically, if we restrict $\circuitDSONC$ to circuit functions which additionally satisfy that the coefficient corresponding to $\betab$ is negative and identify the cone $\circuitDSONC$ with the corresponding set of coefficient vectors, then we get the subset 
\begin{align*}
	\set{(c, d) \in \R^A_{> 0} \times \R_{< 0}\ : \ |d| \le \prod_{\alpb \in A} c_{\alpb}^{\lambda{\alpb}}}
\end{align*} 
of the power cone $\mathrm{P}_{\Vector{\lambda}}$, where $\Vector{\lambda}$ denotes the unique vector in $\Lambda(A^+, \betab)$.
Additionally, the dual of the power cone is given by the set
\begin{align*}
	\struc{\check{\mathrm{P}}_{\Vector{\delta}}} \ = \ \set{(u, v) \in \R^A_{> 0} \times \R\ : \ |v| \le \prod_{\alpb \in A} \left(\frac{u_{\alpb}}{\delta_{\alpb}}\right)^{\delta_{\alpb}}}.
\end{align*}
Thus, we can find the same relation as described above between the primal cone $\circuitSONC$ and $\check{\mathrm{P}}_{\Vector{\lambda}}$.

\medskip

Following \cref{corollary:DualCircuitNumber}, there is an easy way to construct a circuit function in the DSONC cone from any given nonnegative circuit function.
\begin{corollary}
	Let $f(\xb) = \sum_{\alpb \in A^+} c_{\alpb} \expalpha + c_{\betab} \expbeta$ be a circuit function with $\supp(f) = A^+ \cup \set{\betab}$.
	Let $\Vector{\lambda}$ be the unique vector in $\Lambda(A^+, \betab)$.
	Then $f \in \circuitSONC$ if and only if
	\begin{align*}
		\check{f}(\xb) \ = \ \sum_{\alpb \in A^+} \frac{c_{\alpb}}{\lambda_{\alpb}} \expalpha + c_{\betab} \expbeta \ \in \ \circuitDSONC.
	\end{align*}
\label{corollary:PrimalDualConnection}
\end{corollary}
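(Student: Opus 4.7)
The plan is to show both containments by a direct comparison of the circuit number bound for $f$ (\cref{theorem:NonnegativeCircuits}) and the dual circuit number bound for $\check{f}$ (\cref{corollary:DualCircuitNumber}). Since the support of $\check{f}$ equals the support of $f$, the supporting simplex and hence the unique barycentric coordinates $\Vector{\lambda} \in \Lambda(A^+,\betab)$ are the same for both functions, which is what makes the two bounds comparable.

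First I would record the two inequality criteria explicitly. By \cref{theorem:NonnegativeCircuits}, $f \in \circuitSONC$ if and only if
\[
\abs{c_{\betab}} \ \le \ \Theta_f \ = \ \prod_{\alpb \in A^+} \left(\frac{c_{\alpb}}{\lambda_{\alpb}}\right)^{\lambda_{\alpb}}.
\]
By \cref{corollary:DualCircuitNumber}, $\check{f} \in \circuitDSONC$ if and only if
\[
- c_{\betab} \ \le \ \check{\Theta}_{\check{f}} \ = \ \prod_{\alpb \in A^+} \left(\frac{c_{\alpb}}{\lambda_{\alpb}}\right)^{\lambda_{\alpb}},
\]
where I used that the $\alpb$-coefficient of $\check{f}$ is $c_{\alpb}/\lambda_{\alpb}$, so that the dual circuit number of $\check{f}$ turns out to be exactly $\Theta_f$. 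Both conditions thus share the same right-hand side.

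Next I would split into the two sign cases for $c_{\betab}$ to handle the $\abs{\cdot}$ versus the raw sign in the two inequalities. If $c_{\betab} \ge 0$, then $f$ is a sum of nonnegative exponential monomials (and so trivially in $\circuitSONC$), and $-c_{\betab} \le 0 \le \check{\Theta}_{\check{f}}$, so $\check{f} \in \circuitDSONC$ by \cref{corollary:DualCircuitNumber}; both sides of the equivalence hold. If $c_{\betab} < 0$, then $\abs{c_{\betab}} = -c_{\betab}$, so the two displayed inequalities are literally the same statement, and the equivalence follows immediately.

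There is no real obstacle here; the corollary is essentially a repackaging of the identity $\check{\Theta}_{\check{f}} = \Theta_f$ that arises once one divides each positive coefficient of $f$ by its barycentric weight. The only subtlety to be careful about is the sign case $c_{\betab} \ge 0$, where one must not confuse $\abs{c_{\betab}}$ with $-c_{\betab}$; but as above, in that case both containments are trivial.
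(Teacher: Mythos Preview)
Your proposal is correct and follows essentially the same approach as the paper: both proofs rest on the identity $\check{\Theta}_{\check f}=\Theta_f$ and then invoke \cref{theorem:NonnegativeCircuits} and \cref{corollary:DualCircuitNumber}. Your explicit case split on the sign of $c_{\betab}$ is a small extra care the paper leaves implicit (it tacitly works in the situation $c_{\betab}<0$), but the core argument is identical.
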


\begin{proof}
	If $f \in \circuitSONC$, then we have by \cref{theorem:NonnegativeCircuits} that 
	\begin{align*}
		-c_{\betab} \ \le \ \Theta_f \ = \ \prod_{\alpb \in A^+} \left(\frac{c_{\alpha}}{\lambda_{\alpb}}\right)^{\lambda_{\alpb}} \ = \ \check{\Theta}_{\check{f}}.
	\end{align*}
	By \cref{corollary:DualCircuitNumber}, it follows that $\check{f} \in \circuitDSONC$.
	The reverse direction follows analogously.
\end{proof}

We further conclude that for every primal SONC certificate providing a lower bound to some polynomial / exponential sum, there exists a corresponding DSONC bound.

\begin{corollary}
	Let $A = A^+ \cup \set{\betab} \subset \R^n$ be a circuit satisfying $\betab = \Vector{0} \in \relint{A^+}$ and fix some coefficient vector $\Vector{c} \in \R^A$ such that $c_{\alpb} \ge 0$ for all $\alpb \in A^+$.
	If we define
	\begin{align*}
		\gamma_{\text{SONC}} & \ = \ \sup\set{\gamma \in \R \ : \ \sum_{\alpb \in A^+} c_{\alpb} \expalpha - \gamma \in \circuitSONC}, \text{ and} \\
		\gamma_{\text{DSONC}} & \ = \ \sup\set{\gamma \in \R \ : \ \sum_{\alpb \in A^+} c_{\alpb} \expalpha - \gamma \in \circuitDSONC},
	\end{align*}
	then it holds that 
	\begin{align}
		\gamma_{\text{SONC}} \ = \ \left( \prod_{\alpb \in A^+} \lambda_{\alpb}^{-\lambda_{\alpb}} \right) \cdot \gamma_{\text{DSONC}},
	\label{eqn:DualBoundFromSONC}
	\end{align}
	where $\Vector{\lambda}$ denotes the unique vector in $\Lambda(A^+, \Vector{0})$.
\label{corollary:DualBoundApprox}
\end{corollary}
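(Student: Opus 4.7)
The plan is to reduce the statement to a direct comparison between the circuit number $\Theta_f$ of \cref{theorem:NonnegativeCircuits} and the dual circuit number $\check{\Theta}_f$ of \cref{corollary:DualCircuitNumber}, both evaluated on the shifted function $f_\gamma(\xb) = \sum_{\alpb \in A^+} c_{\alpb} \expalpha - \gamma$. Since $\betab = \Vector{0}$, the constant $-\gamma$ plays precisely the role of $c_{\betab}$, so $f_\gamma$ is (for $\gamma > 0$) a circuit function supported on $A = A^+ \cup \{\betab\}$ with positive vertex coefficients, and the barycentric vector $\Vector{\lambda}$ from $\Lambda(A^+, \Vector{0})$ is exactly the one appearing in both circuit numbers.

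First I would dispose of the trivial regime $\gamma \le 0$: there all coefficients of $f_\gamma$ are nonnegative, so $f_\gamma$ lies in $\circuitSONC$ and $\circuitDSONC$ as a sum of nonnegative monomials (each $\{\alpb\}$ being a circuit in its own right). Neither supremum is attained in this region, so we may assume $\gamma > 0$ and $c_{\betab} = -\gamma < 0$ throughout.

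Next, I would apply \cref{theorem:NonnegativeCircuits} to read off
\[
\gamma_{\text{SONC}} \ = \ \Theta_{f_\gamma} \ = \ \prod_{\alpb \in A^+} \left(\frac{c_{\alpb}}{\lambda_{\alpb}}\right)^{\lambda_{\alpb}},
\]
and \cref{corollary:DualCircuitNumber} to read off
\[
\gamma_{\text{DSONC}} \ = \ \check{\Theta}_{f_\gamma} \ = \ \prod_{\alpb \in A^+} c_{\alpb}^{\lambda_{\alpb}}.
\]
Both right-hand sides depend only on the fixed data $\Vector{c}$ and $\Vector{\lambda}$, not on $\gamma$, and the suprema are attained because both cones are closed. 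The identity \cref{eqn:DualBoundFromSONC} then follows from the single algebraic manipulation
\[
\prod_{\alpb \in A^+} \left(\frac{c_{\alpb}}{\lambda_{\alpb}}\right)^{\lambda_{\alpb}} \ = \ \left(\prod_{\alpb \in A^+} \lambda_{\alpb}^{-\lambda_{\alpb}}\right) \cdot \prod_{\alpb \in A^+} c_{\alpb}^{\lambda_{\alpb}},
\]
which uses only that $\sum_{\alpb \in A^+} \lambda_{\alpb} = 1$.

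There is essentially no obstacle here; the work has already been done in \cref{theorem:NonnegativeCircuits} and \cref{corollary:DualCircuitNumber}, and the rest is bookkeeping. The only point worth flagging is that the argument implicitly requires every $c_{\alpb} > 0$ so that $\Vector{\lambda} \in (0,1)^{\#A^+}$ is well-defined and the logarithmic expressions in the two circuit-number characterizations make sense; if some $c_{\alpb} = 0$, then both sides of \cref{eqn:DualBoundFromSONC} vanish and the identity is trivial.
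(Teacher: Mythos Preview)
Your proof is correct and follows exactly the same approach as the paper: identify $\gamma_{\text{SONC}}$ with the circuit number via \cref{theorem:NonnegativeCircuits}, identify $\gamma_{\text{DSONC}}$ with the dual circuit number via \cref{corollary:DualCircuitNumber}, and factor the quotient. The paper's proof is in fact just your displayed chain of equalities without the surrounding commentary on the $\gamma \le 0$ and $c_{\alpb} = 0$ edge cases.
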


Note that the assumption $\betab = \Vector{0}$ in the previous proposition is not restrictive, as any circuit function satisfies this condition after an affine transformation $\alpb \mapsto \alpb - \betab$ for all exponents $\alpb \in A$, and such a transformation leaves the dual circuit number invariant.

\begin{proof}[Proof of \cref{corollary:DualBoundApprox}]
	By \cref{theorem:NonnegativeCircuits} and \cref{corollary:DualCircuitNumber} it holds that 
	\begin{align*}
		\gamma_{\text{SONC}} & \ = \ \prod_{\alpb \in A^+} \left(\frac{c_{\alpb}}{\lambda_{\alpb}}\right)^{\lambda_{\alpb}} 
		\ = \ \left( \prod_{\alpb \in A^+} \left(\frac{1}{\lambda_{\alpb}}\right)^{\lambda_{\alpb}} \right) \cdot \left( \prod_{\alpb \in A^+ } c_{\alpb}^{\lambda_{\alpb}} \right) \ = \ \left( \prod_{\alpb \in A^+} \lambda_{\alpb}^{-\lambda_{\alpb}} \right) \cdot \gamma_{\text{DSONC}}. 
	\end{align*}
\end{proof}

\begin{example}
	Consider again the exponential sum
	\begin{align*}
		m_{3,1}(e^x, e^y) \ = \ e^{4x + 2y} + e^{2x + 4y} - 3 e^{2x + 2y} + 1
	\end{align*}
	coming from the Motzkin polynomial.
	To enforce the condition of \cref{corollary:DualBoundApprox} that the inner point must be the origin, we shift the exponents by $(-2, -2)$, i.e., we investigate
	\begin{align*}
		\bar m = m_{3,1}(e^x, e^y) \cdot e^{-2x -2y} \ = \ e^{2x} + e^{2y} - 3 + e^{-2x -2y}.
	\end{align*}
	It is well-known that $m_{3,1}$ is a circuit function with $m_{3,1} \ge 0 = -(1 + \gamma_{\text{SONC}})$, and that this minimum is attained at $(e^x,e^y) = (1,1)$.
	The same holds true for $\bar m$.
	The barycentric coordinates of the inner term $\betab =\Vector{0}$ with respect to $A^+ = \set{(2, 0)^\top, (0,2)^\top, (-2,-2)^\top}$ are $\Vector{\lambda} = (1/3, 1/3, 1/3)$.
	With this, \cref{eqn:DualBoundFromSONC} becomes
	\begin{align*}
		-1 \ = \ \frac{1}{27} \cdot \gamma_{\text{DSONC}}.
	\end{align*}
	The DSONC bound found in this way corresponds precisely to the dual SONC bound of the Motzkin polynomial as shown in \cite{Dressler:Heuer:Naumann:deWolff:DualSONCLP}.
\label{example:MotzkinBound}
\end{example}

\begin{remark}
	Recall that we can compute the DSONC bound of an exponential sum $f$ via \emph{linear} programming, see \cite{Dressler:Heuer:Naumann:deWolff:DualSONCLP}, which makes the connection in \cref{corollary:DualBoundApprox} particularly interesting for future computations. 
	When a circuit decomposition corresponding to the optimal SONC bound to $f$ is known, we can solve LPs to find DSONC bounds to those circuit functions and then effortlessly improve this bound using the relation given in \cref{corollary:DualBoundApprox} and obtain the optimal primal SONC bound. 

	If we do not compute an explicit decomposition into circuits, which is the case for the dual SONC computations in \cite{Dressler:Heuer:Naumann:deWolff:DualSONCLP}, then we can still make use of \cref{corollary:DualBoundApprox} in the following way.
	Computing lower bounds via the DSONC cone requires that we decompose $f$ into AGE functions, and then apply representation (3) from \cref{theorem:DualCone}. 
	This means that the barycentric coordinates $\Vector{\lambda}$ for each of these AGE functions are, in general, no longer unique. 
	However, for every involved AGE function containing the constant term, we can, by \cref{theorem:SONCCone}, choose \emph{any} corresponding $\Vector{\lambda}$ and still get an improved but not necessarily optimal SONC bound via \cref{eqn:DualBoundFromSONC}.
\label{remark:DualBoundApprox}
\end{remark}

\begin{remark}
	We conclude this subsection by emphasizing that, while it is true that we can use linear programming to \emph{optimize} over the DSONC cone, the DSONC cone itself is \emph{not} polyhedral!
	We obtain a linear program because when optimizing a given exponential sum, we consider all coefficients except the constant term to be fixed. 
	This turns the nonlinear description of the boundary of the DSONC cone into a linear condition.
	\label{remark:LinearProgramming}
\end{remark}

\subsection{Zeros of Elements in the DSONC Cone}

It is a direct consequence of \cref{corollary:DualCircuitNumber} and the subsequent observation that functions in the DSONC cone cannot have a real zero.
Thus, the DSONC cone does not intersect the boundary of the cone of nonnegative exponential sums.

To show this, we briefly recall that in \cite[Proposition 3.4 and Corollary 3.9]{Iliman:deWolff:Circuits} Iliman and the second author showed that if $A = A^+ \cup \set{\betab} \subset \N^n$ is a circuit, then the nonnegative circuit function
\begin{align*}
	f_{\betab}(\xb) \ = \ \sum_{\alpb \in A^+} c_{\alpb} \expalpha + c_{\betab} \expbeta \ \in \ \circuitSONC
\end{align*} 
has a real zero if and only if $c_{\betab} = - \Theta_{\betab}$.
This immediately generalizes to the case $A \subset \R^n$, and, using \cref{theorem:SONCCone}, also to the case of {\AGE}s.
In other words, $f_{\betab} \in \circuitSONC$ has a real zero if and only if it is contained in $\partial \circuitSONC$.
The same does not hold for the DSONC cone.

\begin{corollary}
	Let $A \subset \R^n$ be some fixed finite support set and let $f$ be an exponential sum in $\ADSONC$.
	Then
	\begin{align*}
		\ADSONC \cap \ \partial\ASONC = \emptyset.
	\end{align*}
	In particular, this shows that 
	\begin{align*}
	\text{if } f(\xb) \in \ADSONC \ \text{, then } \ f(\xb) > 0 \text{ for all } \xb \in \R^n,
\end{align*}
	i.e., $f$ has no real zeros.
\label{corollary:NoZeros}
\end{corollary}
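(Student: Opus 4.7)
The plan is to reduce the statement to a single circuit function in the dual SONC cone and then exploit a strict gap between its dual and its primal circuit numbers.

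\emph{Reduction to circuit functions.} By \cref{corollary:DSONCisDSAGE}, every $f \in \ADSONC$ admits a representation $f = \sum_k \mu_k f_k$ with $\mu_k \ge 0$, where each $f_k$ is a minimal circuit function whose coefficient vector lies in $\dualASONC$; restricted to its support $C_k$, the coefficient vector of $f_k$ in turn lies in $\dualcircuitSONC$ of its own supporting circuit. A finite nonnegative combination of pointwise strictly positive functions is itself strictly positive, so it suffices to prove that every such $f_k$ is strictly positive on all of $\R^n$.

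\emph{Single-circuit case.} Let $g(\xb) = \sum_{\alpb \in C^+} c_{\alpb} \expalpha + c_{\betab} \expbeta$ be a circuit function with $C = C^+ \cup \set{\betab}$ and $\coeff(g) \in \dualcircuitSONC$. If $c_{\betab} \ge 0$, every summand is a nonnegative exponential monomial and at least one vertex coefficient is strictly positive by \cref{definition:CircuitFunction}(2), so $g(\xb) > 0$ on $\R^n$ (this also subsumes the degenerate one-point circuit). Assume therefore $c_{\betab} < 0$, and let $\Vector{\lambda} \in \Lambda(C^+, \betab)$ be the unique barycentric coordinates of $\betab$. Then \cref{corollary:DualCircuitNumber} yields $-c_{\betab} \le \check{\Theta}_g$, and comparing with the primal circuit number one has the factorization
\begin{align*}
    \Theta_g \ = \ \check{\Theta}_g \cdot \prod_{\alpb \in C^+} \lambda_{\alpb}^{-\lambda_{\alpb}}.
\end{align*}
Since $\betab$ lies in the \emph{strict} interior of the simplex $\conv(C)$, the vector $\Vector{\lambda}$ is a probability distribution on $\#C^+ \ge 2$ atoms with every coordinate in $(0,1)$, so its Shannon entropy $-\sum_{\alpb} \lambda_{\alpb} \ln \lambda_{\alpb}$ is strictly positive. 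This is equivalent to $\prod_{\alpb} \lambda_{\alpb}^{-\lambda_{\alpb}} > 1$, and therefore $-c_{\betab} \le \check{\Theta}_g < \Theta_g$. By the zero characterization of nonnegative circuit functions recalled immediately before the statement, a nonnegative circuit function has a real zero precisely when $-c_{\betab} = \Theta_g$; the strict inequality rules this out. Combining this with $g \ge 0$ (via \cref{corollary:DSONCinPrimal}), one concludes $g(\xb) > 0$ for every $\xb \in \R^n$.

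\emph{Conclusion and main obstacle.} Putting both steps together, any nonzero $f \in \ADSONC$ is strictly positive on $\R^n$, hence cannot belong to $\partial\ASONC$, which (apart from the origin) consists of nonnegative functions with at least one real zero. The crux of the argument is the strict gap $\check{\Theta}_g < \Theta_g$: it is exactly strict positivity of the Shannon entropy of $\Vector{\lambda}$, which depends crucially on $\betab$ lying in the \emph{strict} interior of $\conv(C)$ with $\lambda_{\alpb} \in (0,1)$ for every vertex; without this, the inequality would only be weak and the argument would collapse. Everything else is bookkeeping via DSONC = DSAGE and the known zero characterization of circuit functions.
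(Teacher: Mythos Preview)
Your proof is correct and follows essentially the same route as the paper: decompose an element of $\ADSONC$ into circuit pieces, observe on each piece the strict gap $\check{\Theta}_g < \Theta_g$ (which you make nicely explicit via the Shannon entropy of $\Vector{\lambda}$), and combine this with the characterization that a nonnegative circuit function has a real zero only when $-c_{\betab} = \Theta_g$. The paper's proof is organized as a contradiction argument and cites \cite[Proposition~4.4]{Forsgaard:deWolff:BoundarySONCcone} for the fact that functions on $\partial\ASONC$ have a real zero, which you use implicitly in your final paragraph; otherwise the two arguments are the same.
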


\begin{proof}[Proof of \cref{corollary:NoZeros}]
	Let $A = A^+ \cup A^-$ with an arbitrary support decomposition as in \cref{eqn:SupportDecomposition}. 
	By \cite[Proposition 4.4]{Forsgaard:deWolff:BoundarySONCcone}, we have that if $f \in \partial \signedSONC$, then $f$ has a real zero.
	We thus assume for sake of contradiction that $f$ is an exponential sum in $\signedDSONC$ which has a real zero.
	By definition of $\signedDSONC$, $f$ can be written as a sum of nonnegative circuit functions
	\begin{align*}
		f(\xb) \ = \ \sum_{\betab \in A^-} f_{\betab}(\xb),
	\end{align*}
	where $f_{\betab} \in \circuitDSONC$.
	Thus, $f$ can only have a real zero $\Vector{z}$, if $f_{\betab}(\Vector{z}) = 0$ for all $\betab \in A^-$.
	This is the case if and only if the coefficients $c_{\betab}$ corresponding to the inner terms are equal to $-\Theta_{f_{\betab}}$ by \cite[Proposition 3.4 and Corollary 3.9]{Iliman:deWolff:Circuits}.
	The claim now follows immediately from \cref{corollary:DualCircuitNumber}, which says that since all the $f_{\betab}$ are elements in $\circuitDSONC$, it must hold that
	\begin{align*}
		|c_{\betab}| \ \le \ \check{\Theta}_{f_{\betab}} \ < \ \Theta_{f_{\betab}}.
	\end{align*}
	Since the support decomposition is arbitrary, the claim follows for general support sets as well.
\end{proof}

We close this subsection by pointing out that the existing results on minimizers of circuit functions, see \cref{corollary:Minima}, especially also holds for circuit functions in the DSONC cone.

\subsection{Extreme Rays of the Dual SONC Cone}

The following proposition gives a description of the extreme rays of the DSONC cone.
Similarly to the case illustrated in \cref{corollary:DualBoundApprox}, the extreme rays of the DSONC cone can be inferred from those of the primal SONC cone by replacing each involved circuit number with its corresponding dual circuit number.

\begin{proposition}
	Let $A \subset \R^n$ be a finite support set.
	Consider the set $\cCmin(A)$ of all minimal circuits contained in $A$.
	The extreme rays of $\ADSONC$ are given by the set
	\begin{align*}
		E_1 & \ \cup \ E_2,
	\end{align*}
	where
	\begin{align*}
		\struc{E_1} & \ = \ \bigcup_{C^+ \cup \set{\betab} \in \cCmin(A)} \set{\sum_{\alpb \in C^+} c_{\alpb} \expalpha - \left(\prod_{\alpb \in C^+}c_{\alpb}^{\lambda_{\alpb}}\right) \expbeta \ : \ \Vector{c} \in \R^{C^+}_{>0}}, \; \text{ and }\\
		\struc{ E_2} & \ = \ \bigcup_{\Vector{\gamma} \in A} \set{d e^{\langle \xb, \Vector{\gamma} \rangle} \ : \ d \in \R_{>0}}.
	\end{align*}
\label{proposition:ExtremeRays}
\end{proposition}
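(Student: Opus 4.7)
My plan is to follow the standard template for classifying extreme rays of $\ASONC$, adapted to the DSONC setting by replacing the circuit number $\Theta_f$ with the dual circuit number $\check{\Theta}_f$. The argument splits into two halves: a \emph{generation} step showing every $f \in \ADSONC$ lies in $\cone(E_1 \cup E_2)$, and an \emph{extremality} step showing that each element of $E_1 \cup E_2$ actually spans an extreme ray.

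For generation, I would invoke \cref{corollary:DSONCisDSAGE} to write an arbitrary $f \in \ADSONC$ as a conic combination of minimal circuit functions whose coefficient vectors lie in the dual SONC cone, together with nonnegative monomials $d\,e^{\langle \xb, \Vector{\gamma}\rangle}$ with $\Vector{\gamma} \in A$, $d \geq 0$, which are already in $E_2$. For each minimal circuit summand $f_0 = \sum_{\alpb \in C^+} c_{\alpb}\expalpha + c_{\betab}\expbeta$ satisfying $-c_{\betab} \leq \check{\Theta}_{f_0}$, I would uniformly rescale the outer coefficients by $t := -c_{\betab}/\check{\Theta}_{f_0} \in [0,1]$. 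Using $\sum_{\alpb \in C^+} \lambda_{\alpb} = 1$, the dual circuit number of $\sum_{\alpb \in C^+} t\, c_{\alpb}\expalpha + c_{\betab}\expbeta$ equals $t\, \check{\Theta}_{f_0} = -c_{\betab}$, so this piece lies in $E_1$, while the residual $(1-t)\sum_{\alpb \in C^+} c_{\alpb}\expalpha$ is a conic combination of monomials from $E_2$. This proves that the extreme rays of $\ADSONC$ are contained in $E_1 \cup E_2$.

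For extremality of an $E_1$ element $f = \sum_{\alpb \in C^+} c_{\alpb}\expalpha - \check{\Theta}_f\,\expbeta$, suppose $f = g + h$ with $g,h \in \ADSONC$. The central task is to confine $S := \supp(g) \cup \supp(h)$ to $C^+ \cup \{\betab\}$. To this end, pick any $\Vector{\delta} \in \vertices{\conv(S)}$; since $g,h \in \ADSONC \subset \NonnegCone$, assumption \cref{assumption:Vertices} forces the coefficient of $\Vector{\delta}$ in whichever of $g,h$ contains it to be strictly positive. If $\Vector{\delta} \notin C^+ \cup \{\betab\}$, then $g_{\Vector{\delta}} + h_{\Vector{\delta}} = 0$ forces the coefficient in the other function to be strictly negative, contradicting vertex positivity there. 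Hence $\vertices{\conv(S)} \subseteq C^+$, and minimality of the circuit $C^+ \cup \{\betab\}$ in $A$ (\cref{definition:Circuit}) yields $S \subseteq A \cap \conv(C^+) = C^+ \cup \{\betab\}$. Then $g$ and $h$ are AGE functions on $C^+ \cup \{\betab\}$ with the same barycentric coordinates $\Vector{\lambda}$ as $f$, and the argument closes via the superadditivity of the weighted geometric mean $\Vector{c} \mapsto \prod_{\alpb \in C^+} c_{\alpb}^{\lambda_{\alpb}}$ (concave and positively homogeneous of degree one):
\begin{align*}
	\check{\Theta}_f \ = \ \prod_{\alpb \in C^+}(c_{\alpb}^g + c_{\alpb}^h)^{\lambda_{\alpb}} \ \geq \ \check{\Theta}_g + \check{\Theta}_h \ \geq \ -c_{\betab}^g - c_{\betab}^h \ = \ \check{\Theta}_f.
\end{align*}
Equality throughout forces $(c_{\alpb}^g)_{\alpb}$ and $(c_{\alpb}^h)_{\alpb}$ to be proportional to $(c_{\alpb})_{\alpb}$ (equality case of the geometric-mean inequality) and forces $g,h$ to saturate the dual circuit inequality, so $g = \mu f$ and $h = (1-\mu) f$ for some $\mu \in [0,1]$.

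For extremality of an $E_2$ element $f = d\,e^{\langle \xb, \Vector{\gamma}\rangle}$, the same vertex-positivity argument applied to $S = \supp(g) \cup \supp(h)$ forces $\vertices{\conv(S)} \subseteq \{\Vector{\gamma}\}$, hence $S = \{\Vector{\gamma}\}$, and then $g, h$ are automatically nonnegative scalar multiples of $f$. The main obstacle across both cases is the support-elimination step: minimality of the circuit in $\cCmin(A)$ does the heavy lifting for $E_1$, while the combinatorial vertex-positivity must be applied carefully to $g$ and $h$ simultaneously for $E_2$. Once supports are confined, the weighted geometric-mean superadditivity closes the $E_1$ case in a single chain of inequalities.
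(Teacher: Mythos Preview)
Your proposal is correct and follows essentially the same route as the paper's proof: both reduce to minimal circuit functions via \cref{corollary:DSONCisDSAGE}, split each such function into an $E_1$-piece plus residual monomials, and then establish extremality by confining supports via vertex-positivity before invoking the equality case of weighted geometric-mean superadditivity (which the paper phrases explicitly as H\"older's inequality). The only cosmetic differences are that in the generation step you scale the \emph{outer} coefficients by $t = -c_{\betab}/\check\Theta_{f_0}$ whereas the paper keeps the outer coefficients fixed and interpolates the \emph{inner} coefficient between $-\check\Theta_f$ and $0$; note also that your rescaling presumes $c_{\betab} \le 0$, so you should dispose of the trivial case $c_{\betab} > 0$ (already a sum of monomials in $E_2$) separately, as the paper does.
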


The proof of \cref{proposition:ExtremeRays} directly follows \cite[Proof of Proposition 4.4]{Katthaen:Naumann:Theobald:UnifiedFramework}.
We include a detailed proof in \cref{section:Appendix} for completeness.

\begin{example}[The univariate Case]
	Consider a univariate (minimal) circuit $A = \set{a_0, a_1, a_2}$, where $a_0 < a_1 < a_2$.
	Then the inner point is $a_1$ and any circuit function supported on $A$ is of the form 
	\begin{align*}
		f(x) \ = \ c_0e^{a_0x} + c_1e^{a_1x} + c_2e^{a_2x}.
	\end{align*}
	According to \cref{proposition:ExtremeRays}, whenever $f$ is contained in the boundary of the DSONC cone, it holds that
	\begin{align*}
		|c_{\betab}| \ = \ c_0^{\lambda_0} \cdot c_2^{\lambda_2},	
	\end{align*}
	where the barycentric coordinates $(\lambda_0, \lambda_2)$ are given by 
	\begin{align*}
		\lambda_0 \ = \ \frac{a_2-a_1}{a_2-a_0}, \qquad
		\lambda_2 \ = \ \frac{a_1-a_0}{a_2-a_0}.
	\end{align*}
	Using this, we can describe the boundary of the DSONC cone for fixed support sets of type $A$, see e.g. \cref{figure:UnivariateBoundary}.
\end{example}

\begin{center}
	\begin{figure}[hbt]
			\includegraphics[draft=false, width=.75\textwidth]{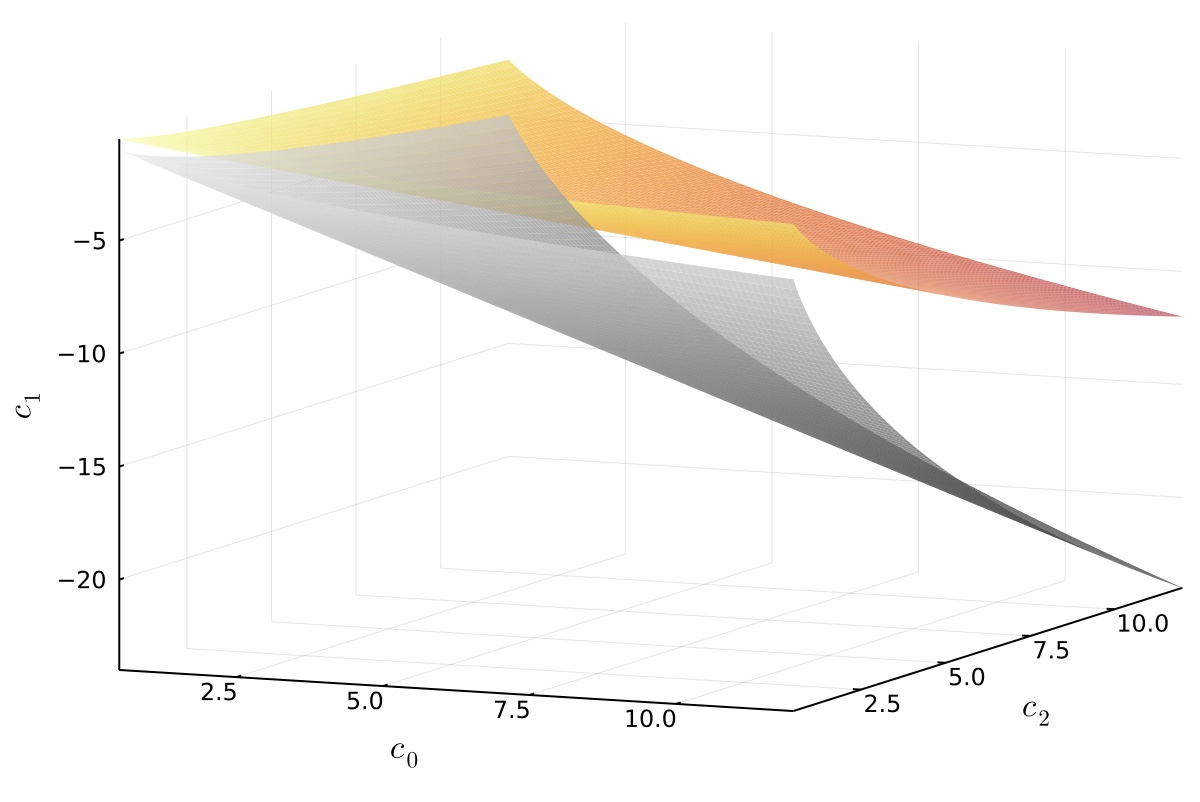}
			\caption{Section of the boundary of the univariate DSONC cone for fixed support set $A = \set{0,2,4}$, including boundary of the corresponding primal SONC cone in gray for comparison.}
		\label{figure:UnivariateBoundary}
	\end{figure}
\end{center}

The following closing observation about extreme rays of the DSONC cone is crucial for the more abstract viewpoints on the DSONC cone, which we provide in \cref{Section:StructuralViewpoints}.

\begin{corollary}
	Let $f = \sum_{\alpb \in A^+} c_{\alpb} \expalpha + c_{\betab} \expbeta \in E_1$ be a minimal circuit function with support $A^+ \cup \set{\betab} \in \cCmin(A)$ for some finite support set $A \subset \R^n$. 
	Then $f$ is an extreme ray in $\partial \ADSONC$ if and only if there exists some $\Vector{\tau} \in \R^n$ such that
	\begin{align*}
		|c_{\betab}|e^{\langle \Vector{\tau}, \betab \rangle} \ = \ c_{\alpb} e^{\langle \Vector{\tau}, \alpb \rangle} \qquad \text{ for all } \; \alpb \in A^+.
	\end{align*}
\label{corollary:Boundary}
\end{corollary}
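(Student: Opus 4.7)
The plan is to translate the defining equality for $E_1$ --- namely $|c_{\betab}| = \prod_{\alpb \in A^+} c_{\alpb}^{\lambda_{\alpb}} = \check{\Theta}_f$ --- into the solvability of a linear system, with the equilibrium parameter $\Vector{\tau}$ serving as the witness. Observe first that, after taking logarithms, the equilibrium condition $|c_{\betab}|\,e^{\langle \Vector{\tau}, \betab\rangle} = c_{\alpb}\,e^{\langle \Vector{\tau}, \alpb\rangle}$ for all $\alpb \in A^+$ is equivalent to the linear system
\[
\ln\!\left(\frac{|c_{\betab}|}{c_{\alpb}}\right) \;=\; \langle \Vector{\tau},\, \alpb - \betab\rangle \qquad \text{for all } \alpb \in A^+, \qquad (\star)
\]
in the unknown $\Vector{\tau} \in \R^n$. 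The structural resemblance to representation (3) of the dual SONC cone in \cref{theorem:DualCone} is then the key observation: on the boundary of the DSONC cone all defining inequalities are saturated simultaneously by a common $\Vector{\tau}$.

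For the direction ``$\Vector{\tau}$ exists $\Rightarrow f \in E_1$'', I would multiply $(\star)$ by $\lambda_{\alpb}$ and sum over $\alpb \in A^+$. Using $\sum_{\alpb}\lambda_{\alpb} = 1$ together with $\sum_{\alpb}\lambda_{\alpb}\alpb = \betab$, the right-hand side collapses to $0$, yielding $\ln|c_{\betab}| = \sum_{\alpb}\lambda_{\alpb}\ln c_{\alpb}$, i.e., $|c_{\betab}| = \check{\Theta}_f$. Hence $f$ takes exactly the form characterizing $E_1$ in \cref{proposition:ExtremeRays}, and is therefore an extreme ray in $\partial\ADSONC$.

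For the converse ``$f \in E_1 \Rightarrow \Vector{\tau}$ exists'', I would analyze the linear map $\phi : \R^n \to \R^{A^+}$ given by $\Vector{\tau} \mapsto (\langle \Vector{\tau}, \alpb - \betab\rangle)_{\alpb \in A^+}$, and show that the prescribed right-hand side of $(\star)$ lies in its image. Since $A^+ \cup \{\betab\}$ is a simplicial circuit, $A^+$ consists of $\#A^+$ affinely independent points, so the vectors $\{\alpb - \betab : \alpb \in A^+\}$ admit a unique (up to scalar) linear dependence, namely $\sum_{\alpb} \lambda_{\alpb}(\alpb - \betab) = 0$. Therefore the image of $\phi$ equals the hyperplane $\{\Vector{r} \in \R^{A^+} : \sum_{\alpb} \lambda_{\alpb} r_{\alpb} = 0\}$, and the assignment $r_{\alpb} := \ln(|c_{\betab}|/c_{\alpb})$ lies in this hyperplane precisely by the boundary equality $|c_{\betab}| = \prod c_{\alpb}^{\lambda_{\alpb}}$. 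A solution $\Vector{\tau}$ to $(\star)$ thus exists.

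The only delicate point I anticipate is the dimension count in the reverse direction: when $\dim(\mathrm{aff}(A)) < n$, the vectors $\{\alpb - \betab\}$ span only a proper subspace of $\R^n$, but this causes no real difficulty since any $\Vector{\tau}$ solving $(\star)$ within this subspace can be extended trivially to $\R^n$ by zero along the orthogonal complement. Everything else reduces to standard manipulations with the barycentric coordinates $\Vector{\lambda}$, so the argument is short.
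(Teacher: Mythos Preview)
Your proof is correct and follows the same underlying idea as the paper --- both hinge on the equality version of representation (3) in \cref{theorem:DualCone}. The paper's proof is considerably terser: it simply asserts that on the boundary of the DSONC cone the inequalities in representation (3) are satisfied with equality for some $\Vector{\tau}$, and then exponentiates. You instead prove this equality-solvability claim directly, via the barycentric identity $\sum_{\alpb}\lambda_{\alpb}(\alpb-\betab)=0$ and the image/kernel duality for the linear map $\phi$. Your route is more self-contained and makes explicit the step that the paper leaves implicit in its appeal to the equivalence of representations (2) and (3); the paper's version is shorter but relies on the reader accepting that boundary membership forces simultaneous saturation of the system in (3).
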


\begin{proof}
	The statement is an immediate consequence of \cref{theorem:DualCone}.
	On the boundary of the DSONC cone the inequalities in representation (3) are satisfied with equality, which yields that
	\begin{align*}
		\ln\left(\frac{|c_{\betab}|}{c_{\alpb}}\right) \ = \ (\alpb - \betab)\T \Vector{\tau}
	\end{align*}
	holds for some $\Vector{\tau} \in \R^n$ and all $\alpb \in A^+$.
	Applying $\exp(\cdot)$ to both sides of the equality 
	yields the desired result.
\end{proof}

\subsection{Closures}

We now show that the DSONC cone is closed under affine transformation of variables. 
The same holds true for the primal SONC case when considering exponential sums, as implicitly noted in \cite{Iliman:deWolff:Circuits}.

\begin{proposition} 
	Let $f(\xb)$ be a circuit function in the DSONC cone.
	Consider an affine transformation $T \ : \ \R^n \rightarrow \R^n$ given by $T(\xb) = \Vector{Mx} + \Vector{a}$ for some transformation matrix $\Vector{M} \in \mathrm{GL}(n, \R)$ and a translation vector $\Vector{a} \in \R^n$.
	Then $f(T(\xb))$ is also contained in the DSONC cone.
\label{proposition:ClosureAffine}
\end{proposition}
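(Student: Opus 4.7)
The plan is to substitute $T$ into $f$, read off the new support and coefficients, check that the circuit structure is preserved, and then verify that the dual circuit number condition of \cref{corollary:DualCircuitNumber} still holds. The key identity driving everything is that the barycentric relation $\betab = \sum_{\alpb \in A^+} \lambda_{\alpb} \alpb$ is respected by the weighting introduced by the translation vector $\Vector{a}$.

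First I would compute
\begin{align*}
f(T(\xb)) \ = \ \sum_{\alpb \in A^+} c_{\alpb}\, e^{\langle \Vector{a}, \alpb\rangle}\, e^{\langle \xb, M^\top\alpb\rangle} \ + \ c_{\betab}\, e^{\langle \Vector{a}, \betab\rangle}\, e^{\langle \xb, M^\top\betab\rangle},
\end{align*}
so $f \circ T$ is supported on $M^\top A = \{M^\top\alpb : \alpb \in A\}$ with new coefficients $\tilde c_{\alpb} = c_{\alpb}\,e^{\langle \Vector{a},\alpb\rangle}$. Since $M \in \mathrm{GL}(n,\R)$, the map $\alpb \mapsto M^\top\alpb$ is an affine bijection of $\R^n$, so it preserves simplicial circuits: $M^\top A^+$ is again the vertex set of a simplex, $M^\top\betab$ lies in its strict interior, and the decomposition $M^\top A = M^\top A^+ \cup \{M^\top\betab\}$ is of the form required in \cref{definition:CircuitFunction}. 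The signs of the coefficients are unchanged because $e^{\langle \Vector{a},\cdot\rangle}>0$, so $f\circ T$ is indeed a circuit function whose sign distribution matches that of $f$.

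Next I would observe that the barycentric coordinates are invariant: applying $M^\top$ to $\betab = \sum_{\alpb \in A^+}\lambda_{\alpb}\alpb$ yields $M^\top\betab = \sum_{\alpb \in A^+}\lambda_{\alpb}\,M^\top\alpb$, so the unique $\Vector{\lambda} \in \Lambda(M^\top A^+, M^\top\betab)$ coincides with the original $\Vector{\lambda} \in \Lambda(A^+,\betab)$. Then a direct computation of the dual circuit number of $f\circ T$ gives
\begin{align*}
\check\Theta_{f\circ T} \ = \ \prod_{\alpb \in A^+}\bigl(c_{\alpb}\,e^{\langle \Vector{a},\alpb\rangle}\bigr)^{\lambda_{\alpb}} \ = \ \check\Theta_f \cdot \exp\!\left(\Bigl\langle \Vector{a},\, \sum_{\alpb \in A^+}\lambda_{\alpb}\alpb\Bigr\rangle\right) \ = \ \check\Theta_f \cdot e^{\langle \Vector{a},\betab\rangle},
\end{align*}
while the transformed inner coefficient satisfies $|\tilde c_{\betab}| = |c_{\betab}|\,e^{\langle \Vector{a},\betab\rangle}$. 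Dividing, the inequality $|\tilde c_{\betab}| \le \check\Theta_{f\circ T}$ is equivalent to the original inequality $|c_{\betab}| \le \check\Theta_f$, which holds by \cref{corollary:DualCircuitNumber} because $f \in \ADSONC$. Hence $f\circ T \in \mathrm{DSONC}_{M^\top A}$.

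There is no real obstacle here: the essential point is the cancellation of the factor $e^{\langle \Vector{a},\betab\rangle}$ on both sides of the dual circuit number condition, which is made possible precisely by the barycentric identity $\betab = \sum\lambda_{\alpb}\alpb$. For a general (non-circuit) element of $\ADSONC$ one decomposes it as a sum of minimal circuit functions via \cref{corollary:DSONCisDSAGE} and applies the argument termwise, using that $T$ acts uniformly on all summands.
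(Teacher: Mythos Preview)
Your proposal is correct and follows essentially the same approach as the paper: substitute $T$, identify the new support $M^\top A$ with coefficients $c_{\alpb}e^{\langle \Vector{a},\alpb\rangle}$, observe that the barycentric coordinates are unchanged, and then use the identity $\betab=\sum_{\alpb}\lambda_{\alpb}\alpb$ to see that the factor $e^{\langle \Vector{a},\betab\rangle}$ appears on both sides of the dual circuit number inequality. The paper presents the last step as a chain starting from $|c_{\betab}|\le\prod c_{\alpb}^{\lambda_{\alpb}}$ and multiplying through, whereas you compute $\check\Theta_{f\circ T}$ directly and compare; this is the same computation.
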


The proof of this proposition is a straightforward computation, which we provide for completeness in \cref{section:Appendix}.
Note that this proposition does not carry over to the case of polynomials on the entire $\R^n$, as in this case the situation has to be monitored orthant by orthant.
For the primal SONC cone this has been shown in \cite[Corollary 3.2]{Dressler:Kurpisz:deWolff:Hypercube}.

Another well-known feature of the primal SONC cone of polynomials on $\R^n$ is that it is not closed under multiplication, see \cite{Dressler:Iliman:deWolff:Positivstellensatz, Dressler:Kurpisz:deWolff:Hypercube}. 
The same is true for the DSONC cone.
This is not a trivial observation, as there exist products of circuits polynomials, which are circuit polynomials again.
Only, this property does not hold in general.

\begin{proposition}
	The DSONC cone is not closed under multiplication.
	\label{proposition:DSONCNotClosedUnderMultiplication}
\end{proposition}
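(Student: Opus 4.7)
The plan is to exhibit an explicit counterexample of the form $f \cdot f$. Take the univariate circuit function $f(x) = 1 - e^{x} + e^{2x}$ supported on $A^+ \cup \{\betab\}$ with $A^+ = \{0, 2\}$ and $\betab = 1$. The unique vector in $\Lambda(A^+, \betab)$ is $(1/2, 1/2)$, so $\check{\Theta}_f = 1^{1/2}\cdot 1^{1/2} = 1 = |c_{\betab}|$ and $f \in \circuitDSONC$ by \cref{corollary:DualCircuitNumber}. A direct expansion yields
\[
f(x)^2 \ = \ 1 - 2e^{x} + 3e^{2x} - 2e^{3x} + e^{4x},
\]
supported on $A = \{0,1,2,3,4\}$ with $A^+ = \{0,2,4\}$ and $A^- = \{1,3\}$. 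It remains to show $f^2 \notin \signedDSONC$.

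Assume the contrary. By \cref{eqn:DualMinkowski}, one may write $f^2 = g_1 + g_3$ with
\[
g_1 = a_0 + a_2 e^{2x} + a_4 e^{4x} - 2e^{x}, \qquad
g_3 = b_0 + b_2 e^{2x} + b_4 e^{4x} - 2e^{3x},
\]
where $a_i, b_i \ge 0$, the coefficient vectors of $g_1, g_3$ lie in the corresponding dual SONC cones, and $a_0 + b_0 = 1$, $a_2 + b_2 = 3$, $a_4 + b_4 = 1$. I would proceed via case analysis on the vanishing pattern of the $a_i, b_i$.

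The generic case, where all coefficients are strictly positive, is the key step. Applying representation (3) of \cref{theorem:DualCone} to each $g_i$ yields $\tau, \sigma \in \R$ such that $a_0 \ge 2e^{\tau}$, $a_4 \ge 2e^{-3\tau}$, $b_0 \ge 2e^{3\sigma}$, and $b_4 \ge 2e^{-\sigma}$. Combining with $a_0 + b_0 = 1$ forces $\tau \le -\ln 2$, while $a_4 + b_4 = 1$ forces $\tau \ge (\ln 2)/3$, a contradiction. In each remaining (degenerate) subcase, $g_1$ or $g_3$ reduces to a circuit function on fewer monomials, so \cref{corollary:DualCircuitNumber} applies directly; a quick check shows the resulting dual circuit number, whose base coefficients are bounded above by $1$ or $3$, falls strictly below the required threshold $2$.

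The main obstacle is to rule out \emph{every} {\AGE} decomposition rather than merely a natural one, which forces the case analysis. However, the generic case collapses to two linear inequalities in the single parameter $\tau$ that are plainly incompatible, and each of the finitely many degenerate subcases admits an elementary refutation via the dual circuit number bound, so the overall argument remains short.
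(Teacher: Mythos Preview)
Your counterexample is correct and your argument for the generic case is sound: the inequalities $a_0 \ge 2e^{\tau}$ and $a_4 \ge 2e^{-3\tau}$ combined with $a_0,a_4\le 1$ indeed yield the incompatible bounds $\tau\le -\ln 2$ and $\tau\ge (\ln 2)/3$. One simplification you may have missed: the case analysis is unnecessary. Representation~(3) of \cref{theorem:DualCone} requires $\ln(2/a_{\alpb}) \le (\alpb-1)\tau$ for \emph{every} $\alpb\in A^+=\{0,2,4\}$; if any $a_{\alpb}=0$ this becomes $\infty\le(\alpb-1)\tau$, which is impossible. Hence all $a_i>0$ (and symmetrically all $b_i>0$) are forced, and your ``generic'' argument is already the entire proof.

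By contrast, the paper takes a two-variable product $f_1(x_1,x_2)=1+e^{2x_1}-e^{x_1}$ and $f_2(x_1,x_2)=1+e^{2x_2}-e^{x_2}$ and appeals to an external lemma (\cite[Lemma~3.1]{Dressler:Kurpisz:deWolff:Hypercube}) to conclude that $f_1\cdot f_2$ is not DSONC. Your univariate square $f^2$ is structurally the same building block but keeps everything one-dimensional; the trade-off is that you must carry out the decomposition argument explicitly rather than citing it. Your route is more self-contained, the paper's is shorter on the page. Both are valid, and both exploit that the boundary DSONC circuit $1-e^{t}+e^{2t}$ already saturates the dual circuit number, leaving no slack when the product introduces a middle coefficient of $2$.
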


\begin{proof}
	We provide a simple counterexample.
	Consider $f_1(x_1, x_2) = 1 + e^{2x_1} - e^{x_1}$ and $f_2(x_1, x_2) = 1 + e^{2x_2} - e^{x_2}$.
	Then $f_1, f_2$ are functions on the boundary of the DSONC cone, that is, the coefficients of their inner terms are equal to their respective dual circuit numbers.
	By reproducing the proof of \cite[Lemma 3.1]{Dressler:Kurpisz:deWolff:Hypercube} for $f_1 \cdot f_2$, we see that the product $f_1 \cdot f_2$ is not contained in the DSONC cone.
\end{proof}

However, by the basic definition of the dual cone, $\dualsignedSONC$ is closed under multiplication.

\begin{proposition}
	The cone $\dualsignedSONC$ is closed under multiplication.
\label{proposition:DualSONCClosureMultiplication}
\end{proposition}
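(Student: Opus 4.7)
The plan is to interpret ``multiplication'' as the componentwise (Hadamard) product of two vectors in $\check\R^A$, since that is the natural algebra operation on the ambient space of $\dualsignedSONC$ and, moreover, it is the only interpretation consistent with the failure of closure under exponential-sum multiplication exhibited in \cref{proposition:DSONCNotClosedUnderMultiplication}; indeed, convolving coefficient vectors (polynomial multiplication) does \emph{not} preserve the dual SONC conditions, as one checks already with $g(x) = 1 - e^x + e^{2x}$, whose square fails the dual SONC inequality at the inner exponent. With the Hadamard reading fixed, the strategy is to verify directly the defining conditions of representation~(2) in \cref{theorem:DualCone} for the product.

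Concretely, I would fix two arbitrary vectors $\Vector{v}^{(1)}, \Vector{v}^{(2)} \in \dualsignedSONC$ and form the componentwise product $\Vector{w} \in \check\R^A$ by $w_{\alpb} \ = \ v^{(1)}_{\alpb} \cdot v^{(2)}_{\alpb}$ for every $\alpb \in A$. The sign condition $w_{\alpb} \ge 0$ for every $\alpb \in A^+$ is immediate, since $v^{(1)}_{\alpb}, v^{(2)}_{\alpb} \ge 0$ by assumption. For the nonlinear inequality at a point $\betab \in A^-$, I would fix an arbitrary $\Vector{\lambda} \in \Lambda(A^+, \betab)$, multiply the two individual dual SONC bounds, and use multiplicativity of the weighted geometric mean $\Vector{c} \mapsto \prod_{\alpb \in A^+} c_{\alpb}^{\lambda_{\alpb}}$ to obtain
\[
|w_{\betab}| \ = \ |v^{(1)}_{\betab}| \cdot |v^{(2)}_{\betab}| \ \le \ \prod_{\alpb \in A^+}(v^{(1)}_{\alpb})^{\lambda_{\alpb}} \cdot \prod_{\alpb \in A^+}(v^{(2)}_{\alpb})^{\lambda_{\alpb}} \ = \ \prod_{\alpb \in A^+}w_{\alpb}^{\lambda_{\alpb}}.
\]
Since the argument applies to every $\Vector{\lambda} \in \Lambda(A^+, \betab)$ and every $\betab \in A^-$, representation~(2) of \cref{theorem:DualCone} immediately yields $\Vector{w} \in \dualsignedSONC$.

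I do not expect any genuine obstacle here. The conceptual heart of the argument is that, after a coordinatewise logarithmic change of variables on the $A^+$-entries, the defining inequalities of $\dualsignedSONC$ become \emph{linear}, and are thus preserved under addition in the log-coordinates, which is exactly componentwise multiplication in the original variables. This is what the paragraph's parenthetical ``by the basic definition of the dual cone'' refers to, and it stands in sharp contrast to the primal picture, where the circuit-number condition interacts badly with convolution of coefficient vectors (i.e.\ with exponential-sum multiplication), giving the non-closure asserted in \cref{proposition:DSONCNotClosedUnderMultiplication}.
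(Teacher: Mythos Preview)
Your proof is correct, and your reading of ``multiplication'' as the componentwise (Hadamard) product is the natural one. The paper, however, argues differently: it appeals to representation~(1) of \cref{theorem:DualCone} and treats $\Vector{v_1}\cdot\Vector{v_2}$ as the pointwise product of functionals on the primal cone, writing $(\Vector{v_1}\cdot\Vector{v_2})(f)=\Vector{v_1}(f)\cdot\Vector{v_2}(f)\ge 0$ for all $f\in\signedSONC$ and then concluding ``by definition of the dual cone''. Taken literally, that product is a quadratic map, not an element of $\check\R^A$, so the paper's argument is more of a one-line heuristic than a proof under your Hadamard interpretation. Your route via representation~(2) is the rigorous one for that interpretation, and it is in fact exactly the technique the paper itself deploys for the very next statement, \cref{proposition:DSONCClosureInnerProduct}. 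One small correction to your commentary: the phrase ``by the basic definition of the dual cone'' preceding the proposition points to representation~(1), not to the log-linearization you describe.
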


\begin{proof}
	Let $\Vector{v_1}, \Vector{v_2} \in \dualsignedSONC$.
	Then, by \cref{theorem:DualCone}, Part (1), it holds that $\Vector{v_1}(f) \ge 0$ and $\Vector{v_2}(f) \ge 0$ for all $f \in \signedSONC$.
	Thus, $(\Vector{v_1} \cdot \Vector{v_2})(f) = \Vector{v_1}(f) \cdot \Vector{v_2}(f) \ge 0$ for all $f \in \signedSONC$.
	By definition of the dual cone, it follows that $\Vector{v_1} \cdot \Vector{v_2} \in \dualsignedSONC$.
\end{proof}

While this property does not translate to general functions in $\signedDSONC$, we can still use it to construct elements in the DSONC cone.

\begin{proposition}
	Let $f, g \in \circuitDSONC$ with $\coeff(f) = \Vector{c}$ and $\coeff(g) = \Vector{d}$ such that $c_{\betab}, d_{\betab} < 0$.
	Then 
	\begin{align*}
		h(\xb) \ = \ \sum_{\alpb \in A^+} c_{\alpb} d_{\alpb} e^{\langle \xb , \alpb \rangle} - c_{\betab} d_{\betab} e^{\langle \xb , \betab \rangle} \ \in \ \circuitDSONC. 	 
	\end{align*}
	\label{proposition:DSONCClosureInnerProduct}
\end{proposition}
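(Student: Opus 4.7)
The plan is to verify membership of $h$ in $\circuitDSONC$ directly by applying the dual circuit number criterion from \cref{corollary:DualCircuitNumber}. First, I would note that $h$ has support contained in the simplicial circuit $A^+ \cup \set{\betab}$, with positive coefficients $c_{\alpb} d_{\alpb} > 0$ on the vertices (since $\Vector{c}, \Vector{d}$ lie in $\dualcircuitSONC$, their entries on $A^+$ are nonnegative, and we may assume strict positivity after discarding degenerate rays) and inner coefficient $-c_{\betab} d_{\betab} < 0$, using the hypothesis $c_{\betab}, d_{\betab} < 0$. Hence $h$ is a genuine circuit function whose membership in $\circuitDSONC$ is governed by the inequality $c_{\betab} d_{\betab} \le \check{\Theta}_h$.

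Second, I would compute $\check{\Theta}_h$ using the fact that $h, f, g$ share the same positive support and hence the same barycentric coordinates $\Vector{\lambda} \in \Lambda(A^+, \betab)$. Then
\begin{align*}
  \check{\Theta}_h \ = \ \prod_{\alpb \in A^+} (c_{\alpb} d_{\alpb})^{\lambda_{\alpb}} \ = \ \left(\prod_{\alpb \in A^+} c_{\alpb}^{\lambda_{\alpb}}\right) \cdot \left(\prod_{\alpb \in A^+} d_{\alpb}^{\lambda_{\alpb}}\right) \ = \ \check{\Theta}_f \cdot \check{\Theta}_g.
\end{align*}

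Third, from the hypothesis $f, g \in \circuitDSONC$ together with \cref{corollary:DualCircuitNumber}, I would read off that $|c_{\betab}| \le \check{\Theta}_f$ and $|d_{\betab}| \le \check{\Theta}_g$. Multiplying these two nonnegative inequalities gives
\begin{align*}
  c_{\betab} d_{\betab} \ = \ |c_{\betab}| \cdot |d_{\betab}| \ \le \ \check{\Theta}_f \cdot \check{\Theta}_g \ = \ \check{\Theta}_h,
\end{align*}
which is exactly the condition from \cref{corollary:DualCircuitNumber} needed to conclude $h \in \circuitDSONC$.

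Since every step is an elementary manipulation of the dual circuit number, I do not anticipate a real obstacle; the only point requiring care is the sign bookkeeping on the inner term (ensuring that after multiplying two negative inner coefficients, the explicit minus sign in the definition of $h$ restores the correct negative sign), and the multiplicativity of $\check{\Theta}$ across the shared barycentric decomposition, both of which are immediate.
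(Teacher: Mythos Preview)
Your proposal is correct and follows essentially the same route as the paper: both arguments reduce to combining the two membership inequalities for $f$ and $g$ via the dual circuit number criterion, the only cosmetic difference being that the paper works additively in logarithms (adding $\ln|c_{\betab}| \le \sum \lambda_{\alpb}\ln c_{\alpb}$ and $\ln|d_{\betab}| \le \sum \lambda_{\alpb}\ln d_{\alpb}$) while you phrase the same step multiplicatively as $|c_{\betab}||d_{\betab}| \le \check\Theta_f\,\check\Theta_g = \check\Theta_h$.
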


Note that this operation is given by the standard inner product of the real vector space of exponential sums with support $A$ with respect to the monomial basis, which for $p = \sum_{\alpb \in A} c_{\alpb} \expalpha$ and $q = \sum_{\alpb \in A} d_{\alpb} \expalpha$, where $\alpb \in \R^n$, is defined as $\langle p, q \rangle \ = \ \sum_{\alpb \in A} c_{\alpb} d_{\alpb}$.

\begin{proof}
	Since $f,g \in \circuitDSONC$, it holds for every $\Vector{\lambda} \in \Lambda(A^+, \betab)$ that
	\begin{align*}
		\begin{aligned}[c]
			\ln(\abs{c_{\betab}}) & \le \sum_{\alpb \in A^+} \lambda_{\alpb} \ln(c_{\alpb}), & & \text{ and } & &
		\ln(\abs{d_{\betab}}) & \le \sum_{\alpb \in A^+} \lambda_{\alpb} \ln(d_{\alpb}).
		\end{aligned}
	\end{align*}
	It follows that
	\begin{align*}
		\ln(\abs{- c_{\betab} d_{\betab}}) \ = \ \ln(\abs{c_{\betab}}) + \ln(\abs{d_{\betab}}) \ \le \ \sum_{\alpb \in A^+} \lambda_{\alpb} (\ln(c_{\alpb}) + \ln(d_{\alpb})) \ = \ \sum_{\alpb \in A^+} \lambda_{\alpb} \ln(c_{\alpb} d_{\alpb}).
	\end{align*}
	Thus $h \in \circuitDSONC$.
\end{proof}

\section{When are DSONC Polynomials Sums of Squares?}
\label{section:DSONCandSOS}

In this section we exclusively consider the polynomial case, i.e., the case where $A \subset \N^n$.
Recall that in this case $\R^A$ can be identified with the space of polynomials in $\xb \in \R_{>0}$ with support $A$ via the bijective mapping $\xb = (x_1, \ldots, x_n) \mapsto (e^{x_1}, \ldots, e^{x_2})$.
Together with the fact that circuit polynomials are only nonnegative on $\R^n$ if they are nonnegative on $\R^n_{>0}$, see \cite[Section 3.1]{Iliman:deWolff:Circuits}, the results of the previous sections (except \cref{proposition:ClosureAffine}) can immediately be adapted to the case of polynomials on the entire $\R^n$.

The connection and intersection between the SONC approach to polynomial nonnegativity and the sum of squares (SOS) approach has been studied in, e.g., \cite{Reznick:AGI,Kurpisz:deWolff:Hierarchies,Iliman:deWolff:Circuits, Hartzer:Rohrig:deWolff:Yuruk:MMS}.
It has been proven by Iliman and the second author in \cite[Theorem 5.2]{Iliman:deWolff:Circuits} that a nonnegative circuit polynomial $p$ supported on $A$ can be written as a sum of squares if and only if the inner point $\betab$ is contained in the \struc{maximal mediated set (MMS)} of $\conv(A)$, or if $p$ is a sum of monomial squares, extending a proof by Reznick of the same statement for agiforms in \cite{Reznick:AGI}.
Note that containment in the MMS is an entirely combinatorial condition which only depends on the support of the polynomial and is independent of the choice of coefficients of the involved polynomials.

\begin{definition}
	Let $\Delta \subset \R^n$ be a set of points such that $\conv(\Delta)$ is a simplex and let $M \subseteq \conv(\Delta) \cap \Z^n$ be a subset of lattice points.
	Then $M$ is called \struc{$\Delta$-mediated} if
	\begin{enumerate}
		\item $\Delta \subset M$, and
		\item for any given $\Vector{\delta} \in M \setminus \Delta$ there exist points $\Vector{\delta}_1, \Vector{\delta}_2 \in (2\Z)^n \cap M$ such that $\Vector{\delta}_1 \ne \Vector{\delta}_2$ and $\Vector{\delta} = \frac{1}{2}\Vector{\delta}_1 + \frac{1}{2}\Vector{\delta}_2$, i.e., $\Vector{\delta}$ is the midpoint of two distinct even lattice point in $M$.
	\end{enumerate}
	The largest set satisfying (1) and (2) is called the \struc{maximal mediated set} of $\Delta$ and is denoted by $\struc{\Delta^*}$.
	\label{definition:MaxMediatedSet}
\end{definition}

For a comprehensive introduction we refer the interested reader to \cite{Reznick:AGI,Hartzer:Rohrig:deWolff:Yuruk:MMS,Yuruk:thesis}.
Using results about MMS, we obtain a characterization of all polynomials in the DSONC cone which can be written as sums of squares as a special case.

\begin{corollary}
	Let $p(\xb)$ be a DSONC polynomial supported on a finite set $\Delta = A^+ \cup A^-$, such that the convex hull of $A^+ \subset (2\N)^n$ is a simplex and $A^- \subseteq \mathrm{relint}(\conv(A^+) \cap \N^n)$.
	Then $f$ is a sum of squares if and only if either every $\betab \in A^-$ satisfies $\betab \in \Delta^*$ or $p$ is a sum of monomial squares.
	In particular, there exist DSONCs which are SOS but not sums of monomial squares.
	\label{corollary:DSONCandMMS}
\end{corollary}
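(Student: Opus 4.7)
The plan is to combine the inclusion $\ADSONC \subseteq \ASONC$ from \cref{corollary:DSONCinPrimal} with the maximal-mediated-set characterization of the intersection SONC $\cap$ SOS, which originates with Reznick \cite{Reznick:AGI}, was given for circuit polynomials in \cite[Theorem 5.2]{Iliman:deWolff:Circuits}, and was extended in \cite{Hartzer:Rohrig:deWolff:Yuruk:MMS}. Under the hypothesis that $\conv(A^+)$ is a simplex and every $\betab \in A^-$ is an interior lattice point, the barycentric representation of each $\betab$ with respect to $A^+$ is unique and places strictly positive weight on every vertex. This structural fact will let me argue circuit by circuit.

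For the forward direction, suppose each $\betab \in A^-$ lies in $\Delta^*$ (the sum-of-monomial-squares case is immediate). Using the Minkowski sum representation \cref{eqn:DualMinkowski}, I write $p = \sum_{\betab \in A^-} p_{\betab}$, where each $p_{\betab}$ is an AGE supported on a subset of $A^+\cup\{\betab\}$ with coefficient vector in $\dualcircuitSONC$, hence nonnegative by \cref{corollary:DSONCinPrimal}. By the uniqueness of the barycentric coordinates, each $p_{\betab}$ is either a sum of monomial squares or a genuine circuit polynomial with full support $A^+\cup\{\betab\}$. Applying \cite[Theorem 5.2]{Iliman:deWolff:Circuits} to the latter case, the hypothesis $\betab \in \Delta^*$ yields that $p_{\betab}$ is SOS, and summing SOS polynomials preserves SOS.

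For the reverse direction, suppose $p$ is SOS but not a sum of monomial squares. Since $p\in\ADSONC\subseteq\ASONC$ and since MMS-membership depends only on the support of $p$, I invoke the extension of the Reznick/Iliman--de Wolff characterization to simplex-supported SONC polynomials from \cite{Hartzer:Rohrig:deWolff:Yuruk:MMS}, which forces every $\betab \in A^-$ with nonzero coefficient in $p$ to lie in $\Delta^*$. The main conceptual obstacle here would be the non-uniqueness of SONC decompositions, allowing an SOS representation to ``hide'' inside a combination of circuits whose individual inner points fail the MMS condition; the simplex assumption on $\conv(A^+)$ eliminates exactly this degree of freedom, since it forces every circuit involving $\betab$ as an inner term to have vertex set exactly $A^+$.

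For the ``in particular'' claim, take $A^+ = \{(4,0),(0,4),(0,0)\}$ and $\betab = (1,1) \in \relint(\conv(A^+))\cap \N^2$. The unique barycentric coordinates are $\Vector{\lambda} = (1/4, 1/4, 1/2)$, and the chain $(1,1) = \tfrac{1}{2}(0,0) + \tfrac{1}{2}(2,2)$ together with $(2,2) = \tfrac{1}{2}(4,0) + \tfrac{1}{2}(0,4)$ witnesses $(1,1) \in \Delta^*$. For $p(x,y) = x^4 + y^4 + 1 - xy$, the dual circuit number equals $1^{1/4}\cdot 1^{1/4}\cdot 1^{1/2} = 1$, so $p\in \ADSONC$ by \cref{corollary:DualCircuitNumber}; the identity
$$ p \ = \ \tfrac{1}{4}(x^2-y^2)^2 + \tfrac{1}{2}(xy-1)^2 + \tfrac{3}{4}x^4 + \tfrac{3}{4}y^4 + \tfrac{1}{2} $$
exhibits an explicit SOS representation, while the negative monomial $-xy$ precludes a sum-of-monomial-squares representation.
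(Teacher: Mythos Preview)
Your proof is correct and follows essentially the same route as the paper: reduce to the known maximal-mediated-set characterization for SONC polynomials (the paper cites \cite[Theorem~3.9]{Hartzer:Rohrig:deWolff:Yuruk:MMS} as a single black box for both directions) and use $\ADSONC \subseteq \ASONC$. Your forward direction unpacks this a bit more, arguing circuit-by-circuit via \cref{eqn:DualMinkowski} and \cite[Theorem~5.2]{Iliman:deWolff:Circuits} rather than invoking the simplex-level SONC result directly, but this is the same idea with finer granularity.

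One point worth highlighting: the paper's proof does not address the ``in particular'' clause at all, whereas you supply an explicit witness. Your example $p(x,y)=x^4+y^4+1-xy$ with $A^+=\{(4,0),(0,4),(0,0)\}$, $\betab=(1,1)$ is correct --- the barycentric coordinates, the mediated-set chain, the dual-circuit-number check, and the SOS identity all verify --- so your proof is in fact more complete than the paper's on this point.
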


\begin{proof}
	By \cite[Theorem 3.9]{Hartzer:Rohrig:deWolff:Yuruk:MMS}, the claim holds for any SONC polynomial $p$, and thus it especially also holds for DSONC polynomials.
\end{proof}

\subsection{The Relation of SDSOS and DSONC}

One particularly interesting subset of the SOS cone is the cone of \struc{scaled diagonally dominant (SDSOS) polynomials}.
The SDSOS cone was first introduced my Ahmadi and Majumdar in \cite{Ahmadi:Majumdar:DSOSandSDSOS} and can be interpreted as the cone of those polynomials which can be written as sums of \emph{binomial} squares.
I.e., if $p$ is SDSOS, then we can write $p$ as 
\begin{align}
	p(\xb) \ = \ \sum_{j = 1}^{k} p_j(\xb)
\label{eqn:SDSOSRepresentation}
\end{align} 
with binomial squares $p_j(\xb) = (a_j p_j(\xb) + b_j q_j(\xb))^2$,
where $a_j, b_j \in \R$, and $p_j$ and $q_j$ are monomials, see \cite{Ahmadi:Majumdar:DSOSandSDSOS, Ahmadi:Hall:BasisPursuit}.
Since both the SDSOS and the DSONC cone are contained in the primal SONC cone (see, e.g., \cite{Kurpisz:deWolff:Hierarchies}, and \cref{corollary:DSONCinPrimal}), it is natural to ask whether SDSOS is a subset of DSONC (the converse is not true, since DSONC is not even contained in the SOS cone).
In this section we show that this is not the case.

\begin{proposition}
	Let $p = (a p(\xb) + b q(\xb))^2$ be a binomial square, where $a, b \in \R$, and $p$ and $q$ are monomials.
	Then $p$ is contained in the DSONC cone if and only if it is a sum of monomial squares.
\label{proposition:BinSquareNotDualSONC}
\end{proposition}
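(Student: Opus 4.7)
The plan is to expand the binomial square,
\[
(ap + bq)^2 \ = \ a^2 p^2 + 2ab\, pq + b^2 q^2,
\]
and observe that $\deg(pq) = \tfrac{1}{2}\bigl(\deg(p^2) + \deg(q^2)\bigr)$ sits at the midpoint of the two automatically-even exponents $\deg(p^2), \deg(q^2) \in (2\N)^n$. A \emph{monomial square} has the form $c^2 x^{2\gamma}$, so $(ap+bq)^2$ is a sum of monomial squares if and only if every nonzero monomial in the expansion carries a nonnegative coefficient on an even-parity exponent. This reduces to the dichotomy: either $ab = 0$ (only one outer square survives), or $ab > 0$ with $\deg(pq) \in (2\N)^n$. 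The two bad cases are therefore $ab < 0$, and $ab > 0$ with $\deg(pq) \notin (2\N)^n$.

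The ``if'' direction is immediate: every monomial square is an extreme ray of type $E_2$ from \cref{proposition:ExtremeRays}, and so any sum of monomial squares lies in $\ADSONC$.

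For the ``only if'' direction, I assume that $(ap+bq)^2 \in \ADSONC$ with $ab \ne 0$ and rule out the two bad cases via an extreme-ray decomposition. Using \cref{proposition:ExtremeRays} together with \cref{theorem:SONCSupportDecomposition}, I may write
\[
(ap+bq)^2 \ = \ \sum_i d_i\, h_i \ + \ \sum_j e_j\, m_j \qquad (d_i, e_j \ge 0),
\]
where each $h_i \in E_1$ is a boundary circuit function, each $m_j \in E_2$ is a nonnegative monomial, and every summand is supported inside $\{\deg(p^2), \deg(pq), \deg(q^2)\}$. These three points are collinear with $\deg(pq)$ at the midpoint, so the only minimal circuit in this support has $C^+ = \{\deg(p^2), \deg(q^2)\}$, interior point $\deg(pq)$, and barycentric weights $\lambda = (1/2, 1/2)$. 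Consequently every $h_i$ must look like
\[
h_i \ = \ c_1^{(i)} p^2 + c_2^{(i)} q^2 - \sqrt{c_1^{(i)} c_2^{(i)}}\, pq,
\]
and in the polynomial setting on $\R^n$ an $m_j$ at $\deg(pq)$ is available only when $\deg(pq) \in (2\N)^n$.

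Comparing coefficients of $p^2$ and $q^2$ yields $\sum_i d_i c_1^{(i)} \le a^2$ and $\sum_i d_i c_2^{(i)} \le b^2$, and Cauchy--Schwarz therefore gives
\[
\sum_i d_i \sqrt{c_1^{(i)} c_2^{(i)}} \ \le \ \left(\sum_i d_i c_1^{(i)}\right)^{\!1/2} \left(\sum_i d_i c_2^{(i)}\right)^{\!1/2} \ \le \ |ab|.
\]
Matching the coefficient of $pq$ gives
\[
2ab \ = \ - \sum_i d_i \sqrt{c_1^{(i)} c_2^{(i)}} \ + \ e_{pq},
\]
where $e_{pq} \ge 0$ if $\deg(pq) \in (2\N)^n$ and $e_{pq} = 0$ otherwise. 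Both bad cases now collapse: if $ab > 0$ and $\deg(pq) \notin (2\N)^n$ then $e_{pq} = 0$ forces $2ab \le 0$, contradicting $ab > 0$; and if $ab < 0$ then $\sum_i d_i \sqrt{c_1^{(i)} c_2^{(i)}} = e_{pq} + 2|ab| \ge 2|ab|$, contradicting the Cauchy--Schwarz bound $\le |ab|$. The main obstacle is making sure no exotic DSONC decomposition escapes these constraints, which is precisely why I pass to extreme rays and exploit the uniqueness of the minimal circuit structure on this collinear three-point support.
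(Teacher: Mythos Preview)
Your argument is correct, but it is considerably heavier than what the paper does. The paper's proof is essentially two lines: when $ab<0$, the expanded binomial square
\[
a^2\xb^{2\alpb} + b^2\xb^{2\betab} + 2ab\,\xb^{\alpb+\betab}
\]
is itself a circuit function on the three-point circuit $\{2\alpb,\,\alpb+\betab,\,2\betab\}$ with barycentric weights $(1/2,1/2)$, so \cref{corollary:DualCircuitNumber} applies directly and gives the requirement
\[
|2ab| \ \le \ (a^2)^{1/2}(b^2)^{1/2} \ = \ |ab|,
\]
which is false. No extreme-ray decomposition, no Cauchy--Schwarz, no coefficient matching. The point you may be missing is that \cref{corollary:DualCircuitNumber} is already an \emph{if and only if} characterization of DSONC membership for a single circuit function; once you know the support is a circuit, there is nothing further to check. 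Your extreme-ray plus Cauchy--Schwarz argument effectively re-proves that $\circuitDSONC$ is closed under addition, which is exactly what \cref{corollary:DualCircuitNumber} (together with convexity of the dual cone) already encodes.

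That said, your approach buys something the paper's quick argument glosses over: you explicitly treat the case $ab>0$ with $\deg(pq)\notin(2\N)^n$, where the middle term is not a monomial square even though its coefficient is positive. The paper simply asserts that $ab\ge 0$ makes the expansion ``clearly'' a sum of monomial squares, which is not literally true in that sub-case. Your decomposition argument handles it, though you should be slightly careful: when adapting \cref{proposition:ExtremeRays} to polynomials on all of $\R^n$ with an odd inner exponent, the $E_1$ extreme rays come with inner coefficient $\pm\sqrt{c_1 c_2}$ (both signs), not just the negative one. This does not affect your conclusion, since the triangle inequality $|2ab|\le\sum_i d_i\sqrt{c_1^{(i)}c_2^{(i)}}\le|ab|$ still goes through.
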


\begin{proof}
	Since $p = (a p(\xb) + b q(\xb))^2$ is a binomial square with $a, b \in \R$, and monomials $p$ and $q$, we can find $\alpb, \betab \in \N^n$ such that
	\begin{align}
		p(\xb) \ = \ \left( a \xb^{\alpb} + b \xb^{\alpb} \right)^2 \ = \ a^2 \xb^{2\alpb} + b^2 \xb^{2 \betab} + 2ab \xb^{\alpb + \betab}.
	\label{eqn:BinomialSquare}
	\end{align}
	In the cases where $a,b \in \R_{\ge 0}$ or $a,b \in \R_{\le 0}$, it is clear from \cref{eqn:BinomialSquare} that $p$ is a sum of monomial squares.
	Consider now the case where the signs of $a,b \ne 0$ differ, i.e. the case where the coefficient $2ab$ is negative. 
	Denote the support of $p$ by $A = \{2 \alpb, 2 \betab \} \cup \{ \alpb + \betab\}$.
	Note that $A$ is a circuit.
	If $p$ was contained in $\ADSONC$, then we had for the unique $\Vector{\lambda} \in \Lambda(A^+, \alpb + \betab)$ that
	\begin{align*}
		\ln(\abs{2ab}) \ \le \ \lambda_{\alpb} \ln(a^2) + \lambda_{\betab} \ln(b^2)
	\end{align*}
	by \cref{theorem:DualCone} (2).
	However, $\Vector{\lambda} = (1/2, 1/2)$ and it holds that
	\begin{align*}
		\ln(\abs{2a b}) & \ > \ \frac{1}{2} \ln(a^2) + \frac{1}{2} \ln(b^2) \ = \ \ln(\abs{a b}). 
	\end{align*}
	Thus, $p \notin \circuitDSONC$.
\end{proof}

This essentially shows that the DSONC cone and the SDSOS cone have different building blocks. 
Indeed, we can show that these two cones do not contain each other but have nonempty intersection.

\begin{proposition}
	Let $A \subset \N^n$ be a general fixed finite support set.
	For the cone $\mathrm{SDSOS}_{A}$ of SDSOS functions supported on $A$ and the cone $\ADSONC$ of DSONC polynomials it holds that
	\begin{align*}
		\mathrm{SDSOS}_{A} \ \not\subset \ \ADSONC, \qquad \ADSONC \ \not\subset \ \mathrm{SDSOS}_{A}, \qquad \text{ and } \qquad \ADSONC \ \cap \ \mathrm{SDSOS}_{A} \ \ne \ \emptyset.
	\end{align*}
\label{proposition:RelationDSONCandSDSOS}
\end{proposition}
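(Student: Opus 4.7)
The plan is to prove each of the three claims by exhibiting an explicit witness polynomial supported on $A$, invoking \cref{proposition:BinSquareNotDualSONC} and \cref{corollary:DSONCandMMS}. Throughout, I tacitly assume that $A$ is rich enough to contain the necessary exponents; this is the implicit meaning of ``general'' in the statement, and amounts to $A$ containing (i) three points of the form $2\alpb, 2\betab, \alpb + \betab$ for some $\alpb \ne \betab$, and (ii) a Motzkin-type circuit whose inner point lies outside the maximal mediated set of its vertices.

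For the non-containment $\mathrm{SDSOS}_A \not\subset \ADSONC$, I would take the mixed-sign binomial square
\[
	p(\xb) \ = \ (\xb^{\alpb} - \xb^{\betab})^2 \ = \ \xb^{2\alpb} + \xb^{2\betab} - 2\xb^{\alpb + \betab}
\]
with $\{2\alpb, 2\betab, \alpb + \betab\} \subseteq A$. By construction $p \in \mathrm{SDSOS}_A$, while since $p$ is a mixed-sign binomial square and not a sum of monomial squares, \cref{proposition:BinSquareNotDualSONC} gives $p \notin \ADSONC$.

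For the non-containment $\ADSONC \not\subset \mathrm{SDSOS}_A$, I would use a Motzkin-type circuit: consider $f(\xb) = x_1^4 x_2^2 + x_1^2 x_2^4 + 1 - x_1^2 x_2^2$ supported on $\{(4,2),(2,4),(0,0),(2,2)\} \subseteq A$. The inner point $(2,2)$ has barycentric coordinates $(\tfrac{1}{3}, \tfrac{1}{3}, \tfrac{1}{3})$ with respect to $A^+ = \{(4,2),(2,4),(0,0)\}$, so the dual circuit number of $f$ is $1^{1/3}\cdot 1^{1/3}\cdot 1^{1/3} = 1$, and \cref{corollary:DualCircuitNumber} places $f$ in $\ADSONC$. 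A short check shows $(2,2)$ is not the midpoint of any two distinct even lattice points among $\{(4,2),(2,4),(0,0)\}$, hence $(2,2)$ lies outside the maximal mediated set of $A^+$; since $f$ has a strictly negative coefficient it is not a sum of monomial squares, so \cref{corollary:DSONCandMMS} yields that $f$ is not SOS, and a fortiori $f \notin \mathrm{SDSOS}_A$.

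The remaining claim $\ADSONC \cap \mathrm{SDSOS}_A \neq \emptyset$ is immediate: any monomial square $\xb^{2\alpb}$ with $2\alpb \in A$ is trivially SDSOS, and as a nonnegative exponential monomial it also lies in $\ADSONC$. The only real obstacle in the argument is ensuring that the witnesses for the first two claims have support inside $A$, which is precisely what the mild genericity assumption on $A$ addresses; once that is granted, the proof is essentially an assembly of the three witnesses above.
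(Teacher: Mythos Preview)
Your proof is correct and, for the two non-containments, follows the paper's approach closely: both invoke \cref{proposition:BinSquareNotDualSONC} for $\mathrm{SDSOS}_A \not\subset \ADSONC$, and both use a Motzkin-type circuit together with the maximal-mediated-set criterion for $\ADSONC \not\subset \mathrm{SDSOS}_A$ (the paper uses $m_{3,27}$ where you use $m_{1,1}$, but either works). The only genuine difference is in the intersection claim: you take a monomial square, which is trivially in both cones, whereas the paper exhibits the non-trivial witness $p(x) = 5x^2 + 5x^6 - 8x^4 = (2x-x^3)^2 + (x-2x^3)^2$, a polynomial with a strictly negative inner term. Your route is shorter and perfectly valid for the bare statement; the paper's example buys the extra information that the intersection contains polynomials which are \emph{not} sums of monomial squares, showing in particular that \cref{proposition:BinSquareNotDualSONC} does not extend from individual mixed-sign binomial squares to sums of them.
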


\begin{proof}
The first claim follows directly from \cref{proposition:BinSquareNotDualSONC}.
For the second claim consider $m_{3, 27}(x,y) = x^{4}y^{2} + x^{2}y^{4} - 3x^{2}y^{2} + 27$, which is in the DSONC cone, see \cref{example:Motzkin}.
The support of $m_{3, 27}$ yields the most prominent and easiest example of a maximal mediated set that does not contain interior points.
Therefore, no polynomial in the family of generalized Motzkin polynomials discussed in \cref{example:Motzkin} is SOS; for further discussion see e.g. \cite{Hartzer:Rohrig:deWolff:Yuruk:MMS,Kurpisz:deWolff:Hierarchies,Reznick:AGI,Yuruk:thesis}.
This particularly implies that $m_{3, 27} \notin \mathrm{SDSOS}_{A}$.

For the last statement, we give an example of a polynomial which is contained in $\ADSONC$ and $\mathrm{SDSOS}_{A}$.
Let $p(x) = 5x^{2} + 5x^{6} - 8x^{4}$.
Then $\supp(p) = A$ is a circuit and to show that $p \in \ADSONC$, we can verify the inequality given by \cref{theorem:DualCone} (2).
This holds since
\begin{align*}
	\ln \left(\frac{|-8|}{5} \right) & \ \le \ \frac{1}{2} \ln(5) + \frac{1}{2} \ln(5) \ = \ \ln(5).
\end{align*}
Since furthermore all coefficients corresponding to vertices of the convex hull of $A$ are positive, $p$ is an element in the DSONC cone.
Conversely, we can write $p$ as
\begin{align*}
	p & \ = \ 5x^{2} + 5x^{6} - 8x^{4} \ = \ \left(2x - x^{3}\right)^2 + \left(x - 2x^{3}\right)^2,
\end{align*}
so $p$ is also SDSOS.
\end{proof}

\section{An Abstract Viewpoint on the DSONC Cone}
\label{Section:StructuralViewpoints}

We return once more to the case of general finite support sets $A \subset \R^n$, i.e., to the case of exponential sums.
From \cite{Forsgaard:deWolff:BoundarySONCcone} we know that singular circuit functions and the SONC cone can be viewed at from the following (more) abstract point of view. 
Let $A$ be of the form 
\begin{align}
	A \ = \ \set{\alpb_0, \ldots, \alpb_d}
\label{eqn:NumberedSupport}
\end{align}
for some $d \in \N$, and consider the \struc{exponential toric morphism} $\varphi_A \colon \R^n \rightarrow \R^{d + 1}$
whose coordinates are exponential monomials:
\begin{align*}
	\struc{\varphi_A(\x)} \ = \ \left( e^{\langle\x, \alpb_0\rangle}, \dots, e^{\langle\x, \alpb_d\rangle} \right).
\end{align*}

Then the space $\R^A$ is a given by
\begin{align}
	\R^A \ = \ \set{f(\x) =  \langle\varphi_A(\x), \Vector{c}\rangle \ : \ \Vector{c} \in \R^{d+1}},
\label{eqn:f}
\end{align}
i.e., every exponential sum in $\R^A$ is a composition of $\varphi_A$ and linear forms acting on $\R^{d+1}$.
In particular, there is a group action $G: \R^n\times \R^A \rightarrow \R^A$, given by $(\wb, f) \mapsto f(\x - \wb)$.
Let us interpret this action. 
Assume that $f$ is a singular nonnegative circuit function supported on the circuit $C \in \cC(A)$.
Then, as in \cref{section:SignomialOptimizationSignedSONC}, $\R^A$ is isomorphic to $\R^{d+1}$ by identifying every exponential sum with its coefficient vector $\Vector{c}$. 
Thus, on the one hand, $f$ (and its singular point $\Vector{s} \in \R^n$) are determined by the pair $(C,\Vector{c})$.
On the other hand, the vector of coefficients $\Vector{c}$ is also determined by the pair $(C, \Vector{s})$, i.e., if a circuit is given, then there exists \emph{exactly one} singular nonnegative circuit function with singular point $\Vector{s}$.
Simplicial agiforms (in the sense of Reznick) play a special role as they are exactly those singular nonnegative circuit functions with a singular point at $(0, 0, \ldots, 0)$.
In this sense, one can think about singular nonnegative circuit functions as ``\emph{agiforms plus a group action}''.

\medskip

For the polynomial case, it was already described in \cite{Iliman:deWolff:Circuits} that a nonnegative circuit polynomial $f$ has a zero in $(\R^*)^n$ if and only if $f$ belongs to the $A$-discriminant in $\C^A \supset \R^A$, which is an algebraic hypersurface on $\C^A$ given by all polynomials that admit a singular point.

Following \cite{Forsgaard:deWolff:BoundarySONCcone}, this can be interpreted and adapted for exponential sums as follows.
Let $f(\x) =  \langle\varphi_A(\x), \Vector{c}\rangle$ be supported on $A$ as in \cref{eqn:NumberedSupport} and investigate
\begin{align}
	\left[\begin{array}{ccc}  1 & \cdots & 1 \\ \alpb_0 & \cdots & \alpb_d \end{array}\right] \cdot \left(\Vector{c} * \varphi_A(\x)\right)\T ,
\label{eqn:SetA}
\end{align}
where $\struc{*}$ denotes the component-wise product.
Then the first row of the resulting vector is $f(\x)$ itself and the $(i+1)$-st row equals $x_i \cdot \frac{\partial f}{\partial x_i}(\x)$.
Therefore, $f$ admits a singular point if and only if the vector $\left(\Vector{c} * \varphi_A(\x)\right)$ belongs to the kernel of $A$ for a suitable choice of $\x \in \R^{n}$.
In order to make the property of having a singular point independent of choosing such a point $\x \in \R^{n}$ (or more general $\x \in \C^n$ in the context of arbitrary $A$-discriminants) one quotients out the group action $G$, which exactly corresponds this choice of a singular point.
The resulting hypersurface is called the \struc{reduced $A$-discriminant}; for further information see \cite{Gelfand:Kapranov:Zelevinsky:Discriminants}.

\medskip

Let now $f(\x) =  \langle\varphi_A(\x), \Vector{c}\rangle$ be contained in the DSONC cone.
As $f(\x)$ is strictly positive, there is no singular point, and $f$ is not contained in the $A$-discriminant and does not belong to the kernel of $A$ in the upper sense.
Hence, in this section we discuss
\begin{enumerate}
	\item What is the counterpart of singular points for circuit functions $f$ in the boundary of the DSONC cone (i.e., satisfying $\check \Theta_f = c_{\betab}$)?
	\item What role plays the group action $G$? Is there a counterpart of  the ``\emph{agiforms plus a group action}'' characterization of SONCs for the DSONC cone?
\end{enumerate}
For remainder of this section let $f(\xb) = \sum_{j = 0}^n c_j \xb^{\alpb_j} + c_{\betab} \xb^{\betab}$ be a circuit function with $A^+ = \{\alpb_0, \ldots, \alpb_n\}$.

\begin{definition}
	Let $f(\x)$ be as before.
	We define the \struc{point of equilibrium $\eq{f} \in \R^n$} as the unique point satisfying
	\begin{align*}
		c_0 e^{\langle \eq{f}, \alpb_0 - \betab \rangle} \ = \ \cdots \ = \ c_n e^{\langle \eq{f}, \alpb_n - \betab \rangle}.
	\end{align*}
	\label{definition:Equilibrium}
\end{definition}

Note that the uniqueness of this point comes from the fact that $A^+$ forms a simplex.
We make the following observation.

\begin{corollary}
	The circuit function $f$ belongs to the boundary of the dual SONC cone, i.e., the inequality \cref{eqn:DualCircuitNumber} of the dual circuit number is satisfied with equality if and only if 
	\begin{align}
		|c_{\betab}| \ = \ c_0 e^{\langle \eq{f}, \alpb_0 - \betab \rangle} \ = \ \cdots \ = \ c_n e^{\langle \eq{f}, \alpb_n - \betab \rangle}. 
	\label{Equation:Equilibrium}
	\end{align}
\label{corollary:EquilibriaInnerTerm}
\end{corollary}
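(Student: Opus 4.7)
The plan is to extract this statement directly from Corollary \ref{corollary:Boundary} by identifying the vector $\Vector{\tau}$ appearing there with the equilibrium point $\eq{f}$. Concretely, I would first rewrite the condition of Corollary \ref{corollary:Boundary} for the circuit function $f$. Rearranging the equation $|c_{\betab}|e^{\langle \Vector{\tau}, \betab \rangle} = c_{\alpb_j} e^{\langle \Vector{\tau}, \alpb_j \rangle}$ for all $j \in \{0,\dots,n\}$ by dividing through by $e^{\langle \Vector{\tau}, \betab \rangle}$ yields
\begin{align*}
|c_{\betab}| \ = \ c_{\alpb_j} e^{\langle \Vector{\tau}, \alpb_j - \betab \rangle} \qquad \text{for all } j \in \{0,\dots,n\}.
\end{align*}
This is precisely the statement that all the expressions $c_{\alpb_j} e^{\langle \Vector{\tau}, \alpb_j - \betab \rangle}$ are equal, with common value $|c_{\betab}|$.

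Next I would argue that such a $\Vector{\tau}$ is necessarily unique and coincides with $\eq{f}$. Since $A^+ = \{\alpb_0, \dots, \alpb_n\}$ consists of the $n+1$ vertices of a simplex, the differences $\{\alpb_j - \alpb_0\}_{j=1}^n$ form a basis of $\R^n$. Hence the $n$ linear equations
\begin{align*}
\langle \Vector{\tau}, \alpb_j - \alpb_0 \rangle \ = \ \ln(c_0/c_j) \qquad (j=1,\dots,n),
\end{align*}
which encode the equilibrium condition from \cref{definition:Equilibrium}, admit exactly one solution $\Vector{\tau} \in \R^n$, and this solution is by definition $\eq{f}$. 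Thus any $\Vector{\tau}$ equalizing the scaled monomials must equal $\eq{f}$.

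Combining these two observations gives both directions of the corollary. For the forward direction, if $f$ lies on the boundary of the DSONC cone, then (since a single circuit function is automatically extremal modulo the trivial monomial extreme rays of $E_2$) Corollary \ref{corollary:Boundary} supplies a $\Vector{\tau}$ with the above equalities; by uniqueness $\Vector{\tau} = \eq{f}$, and the common value is $|c_{\betab}|$, which is exactly \cref{Equation:Equilibrium}. For the converse, if \cref{Equation:Equilibrium} holds, then $\Vector{\tau} := \eq{f}$ witnesses the criterion of Corollary \ref{corollary:Boundary}, so $f$ satisfies $|c_{\betab}| = \check{\Theta}_f$, i.e., lies on the boundary.

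The only subtle point — and the step that needs to be handled carefully — is the precise justification that the inequality defining the dual circuit number is saturated exactly when the equalizing $\Vector{\tau}$ exists; this is what Corollary \ref{corollary:Boundary} already encodes via representation (3) of \cref{theorem:DualCone}. The rest is a direct translation between two equivalent reformulations of the same equality condition, so no further machinery is needed.
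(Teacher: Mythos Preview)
Your proof is correct and follows the same route as the paper, which simply states that the result is an immediate consequence of Corollary~\ref{corollary:Boundary}. You have additionally spelled out the identification $\Vector{\tau}=\eq{f}$ via the uniqueness argument coming from the simplex structure of $A^+$, which the paper leaves implicit; this extra detail is harmless and arguably clarifying.
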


\begin{proof}
	This is an immediate consequence of \cref{corollary:Boundary}.
\end{proof}

It follows that the previous condition can be expressed in terms of \struc{tropical geometry}; see e.g. \cite{Maclagan:Sturmfels} for a general introduction to the topic.

\begin{corollary}
	The circuit function $f$ belongs to the DSONC cone if and only if the tropical hypersurface $\cT$ given by
	\begin{align}
		\bigoplus_{\alpb \in A^+} \log|c_{\alpb}| \odot \expalpha \oplus \log|c_{\betab}| \odot \expbeta
	\end{align}
	has genus zero, i.e., its complement contains no bounded connected component.
	\label{corollary:DSONCTropical}
\end{corollary}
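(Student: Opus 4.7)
The plan is to reduce the corollary to a combinatorial statement about which region of the tropical subdivision induced by $\cT$ can be bounded, and then to identify that this region is non-empty exactly when $f \notin \ADSONC$. Throughout I use the max-plus convention so that $\cT$ is the locus where the tropical polynomial
\begin{align*}
p_{\mathrm{trop}}(\xb) \ = \ \max \bigl\{\log|c_{\alpb}| + \langle \xb, \alpb\rangle, \; \log|c_{\betab}| + \langle \xb, \betab\rangle \ : \ \alpb \in A^+ \bigr\}
\end{align*}
attains its maximum at least twice. The complement $\R^n \setminus \cT$ decomposes into open polyhedral regions $R_{\alpb}$ (for $\alpb \in A^+$) and $R_{\betab}$, where in each such region the corresponding affine functional is strictly maximal.

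First, I would argue that every $R_{\alpb}$ with $\alpb \in A^+$ is unbounded, so the only potential bounded connected component of the complement is $R_{\betab}$. This is a standard fact from tropical geometry: since $\alpb$ is a vertex of the Newton polytope $\conv(A)$, the normal cone of $\conv(A)$ at $\alpb$ is a full-dimensional cone along which $\langle \xb, \alpb\rangle$ dominates $\langle \xb, \alpb' \rangle$ for every other $\alpb'$, so translating sufficiently far into this cone keeps $\xb$ inside $R_{\alpb}$. Dually, I will check that $R_{\betab}$, if non-empty, is bounded: if $\xb \in R_{\betab}$ could escape to infinity, then some $\langle \xb, \alpb - \betab\rangle$ with $\alpb \in A^+$ would diverge to $+\infty$ (since $\betab$ is in the relative interior of $\conv(A^+)$, no halfspace $\{\langle \xb, \alpb - \betab\rangle \le 0\}_{\alpb \in A^+}$ meets all others at infinity). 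Hence \emph{genus zero} is equivalent to $R_{\betab} = \emptyset$.

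Next, I translate $R_{\betab} = \emptyset$ into the DSONC inequality. The region $R_{\betab}$ is non-empty precisely when there exists $\xb \in \R^n$ with
\begin{align*}
	\langle \xb, \alpb - \betab\rangle \ < \ \log|c_{\betab}| - \log c_{\alpb} \qquad \text{for all } \alpb \in A^+.
\end{align*}
Taking the convex combination of these strict inequalities with the (unique) barycentric weights $\Vector{\lambda} \in \Lambda(A^+, \betab)$ from \cref{theorem:NonnegativeCircuits}, the left-hand sides collapse via $\sum_{\alpb} \lambda_{\alpb}(\alpb - \betab) = 0$ to $0$, yielding the necessary condition
\begin{align*}
	\log|c_{\betab}| \ > \ \sum_{\alpb \in A^+} \lambda_{\alpb} \log c_{\alpb}.
\end{align*}
Conversely, if this strict inequality holds, Motzkin's transposition theorem (applied to the linear system $\{\langle \xb, \alpb - \betab\rangle < t_{\alpb}\}$ whose kernel of coefficient dependencies is one-dimensional and spanned by $\Vector{\lambda}$) produces such an $\xb$. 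Combining both directions shows $R_{\betab} \ne \emptyset$ exactly iff the DSONC inequality \cref{eqn:DualCircuitNumber} from \cref{corollary:DualCircuitNumber} is violated.

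The main obstacle is the unboundedness/boundedness dichotomy for the regions $R_{\alpb}$ and $R_{\betab}$; while this is folklore in tropical geometry, a clean self-contained argument requires some care to ensure that every vertex of $\conv(A)$ genuinely contributes a full-dimensional unbounded cell to the subdivision while the interior point $\betab$ contributes either an empty cell or a bounded one. Once this polyhedral picture is in place, the Farkas-type duality with the barycentric coordinates $\Vector{\lambda}$ — which is already implicit in \cref{corollary:EquilibriaInnerTerm} — delivers the equivalence with \cref{eqn:DualCircuitNumber}, and the corollary follows.
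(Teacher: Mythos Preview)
Your argument is correct. The paper's proof reaches the same conclusion by a slightly different and terser route: instead of invoking Motzkin's transposition theorem, it observes directly that the $n+1$ hyperplanes separating the outer regions $R_{\alpb}$ meet in the single point $\eq{f}$ (the equilibrium from \cref{definition:Equilibrium}), and that $R_{\betab}$ is nonempty precisely when the $\betab$-term strictly dominates at $\eq{f}$; this is then compared with \cref{corollary:EquilibriaInnerTerm}. In effect the paper replaces your Farkas-type duality step by evaluating the system at the single candidate point where all outer terms coincide, which is possible because $A^+$ is a simplex. Your approach is more self-contained about the polyhedral geometry (you explicitly justify why only $R_{\betab}$ can be bounded and why it is bounded when nonempty), whereas the paper leans on the equilibrium machinery built up in \cref{corollary:Boundary} and \cref{corollary:EquilibriaInnerTerm}; both arrive at exactly the same dichotomy and the same inequality \cref{eqn:DualCircuitNumber}.
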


The proof is straightforward and also follows essentially from \cite[Lemma 3.4 (a)]{Theobald:deWolff:Genus1}.
We give the key steps without discussing tropical geometry in detail here.

\begin{proof}
	It is sufficient to consider the case that $f$ belongs to the boundary of the DSONC cone.
	Then, as $\conv(A)$ is a simplex, the $n+1$ outer hyperplanes of $\cT$ intersect in a unique point $\xb^*$, which has to satisfy
	\begin{align*}
		c_0 e^{\langle \xb^*, \alpb_0 - \betab \rangle} \ = \ \cdots \ = \ c_n e^{\langle \xb^*, \alpb_n - \betab \rangle},
	\end{align*}
	i.e., $\xb^* = \eq{f}$ by \cref{definition:Equilibrium}.
	By \cref{Equation:Equilibrium} it follows that the $f$ belongs to the boundary of the DSONC cone if and only if the inner term does not dominate at $\eq{f}$, i.e., the complement of $\cT$ does not have a bounded component.
\end{proof}

This corollary can be seen as a geometrical manifestation of the somewhat surprising fact that membership of an individual circuit function in the DSONC cone is a linear condition as we discussed in \cref{remark:LinearProgramming}.

\begin{example}~
	\begin{enumerate}
		\item Consider $m_{c, d}(e^x, e^y) =  e^{4x+2y} + e^{2x + 4y} - c e^{2x + 2y} + d$, where $m_{c, d}(x, y)$ is the (generalized) Motzkin polynomial as in \cref{example:Motzkin}.
		For the case $(c, d) = (3, 1)$, this function attains its minimal value in the singular point $\Vector{s} = (0, 0)$ and is not contained in the DSONC cone.
		We have already seen that $m_{3, 27}(e^x, e^y)$ is a circuit function on the boundary of the DSONC cone, and we can compute its point of equilibrium as the solution $(\tau_1^\ast, \tau_2^\ast) \in \R^2$ to the system of equations
		\begin{align}
			e^{2\tau_1} \ = \ e^{2\tau_2} \ = \ 27 e^{-2\tau_1 - 2\tau_2},
		\label{eqn:EquilibriumExample1}
		\end{align}
		which yields $\eq{m_{3, 27}(e^x, e^y)} = (\tau_1^\ast, \tau_2^\ast) = (\ln(\sqrt{3}), \ln(\sqrt{3}))$.
		Note that since we are on the boundary of the DSONC cone, evaluating \cref{eqn:EquilibriumExample1} at this point yields the absolute value of the inner coefficient ($c = 3$); see \cref{corollary:Boundary}.
		If we switch to the case $(c,d) = (1, 1)$, then \cref{eqn:EquilibriumExample1} becomes
		\begin{align*}
			e^{2\tau_1} \ = \ e^{2\tau_2} \ = \ e^{-2\tau_1 - 2\tau_2},
		\end{align*}
		so $\eq{m_{1,1}(e^x, e^y)} = (0, 0)$.
		\item It is particularly easy to calculate the equilibria for circuit functions whose coefficient vector is (a positive scalar multiple of) $(1, \ldots, 1, -1) \in \R^n$, where we assume that the last coordinate corresponds to the inner term.
		As discussed in \cref{example:Motzkin}, such circuit functions are trivially contained in the boundary of the DSONC cone.
		Since in this case the equilibrium has to satisfy 
		\begin{align*}
			e^{\langle \eq{f}, \alpb_0 - \betab \rangle} \ = \ \cdots \ = \ e^{\langle \eq{f}, \alpb_n - \betab \rangle} \ = \ |c_{\betab}| \ = \ 1,
		\end{align*}
		we have $\eq{f} = (0, \ldots, 0)$.
	\end{enumerate}
	\label{example:Equilibria}
\end{example}

\begin{proposition}
	Let $f(\xb) = \sum_{j = 0}^n c_j e^{\langle \xb, \alpb_j - \betab \rangle} + c_{\betab}$ be a circuit function whose support set forms an $n$-simplex.
	Then the minimizer $\xb_{\ast}$ of $f$ satisfies $\xb_{\ast} = \eq{f}$ if and only if $|c_{\betab}| = \frac{1}{n+1}$, and $\supp(f)$ is a \struc{barycentric circuit}, i.e., all barycentric coordinates coincide.
\label{proposition:MinimizerEquilibria}
\end{proposition}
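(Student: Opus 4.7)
My approach is to match the first-order optimality condition for $\xb_*$ against the defining equations of the equilibrium point $\eq{f}$. Introducing the abbreviation $u_j(\xb) := c_j e^{\langle \xb, \alpb_j - \betab\rangle}$ for $j = 0, \ldots, n$, I would compute
\begin{equation*}
\nabla f(\xb) \ = \ \sum_{j=0}^n u_j(\xb)(\alpb_j - \betab),
\end{equation*}
and set this to zero at $\xb_*$. Dividing by $U := \sum_k u_k(\xb_*) > 0$ expresses $\betab$ as the convex combination $\sum_j(u_j(\xb_*)/U)\,\alpb_j$, and since $\conv(A^+)$ is an $n$-simplex the barycentric coordinates of $\betab$ are unique, forcing $u_j(\xb_*) = \lambda_j U$ for every $j$.

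I would then compare with the equilibrium condition $u_0(\eq{f}) = \cdots = u_n(\eq{f})$. If $\xb_* = \eq{f}$, the values $\lambda_j U$ must all be equal, which together with $\sum_j \lambda_j = 1$ yields $\lambda_j = 1/(n+1)$, i.e., $\supp(f)$ is a barycentric circuit. Conversely, if the circuit is barycentric then $u_j(\xb_*) = U/(n+1)$ is already independent of $j$, so $\xb_*$ satisfies the equilibrium equation, and the uniqueness of $\eq{f}$ gives $\xb_* = \eq{f}$. This establishes the biconditional between $\xb_* = \eq{f}$ and the barycentric property.

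For the final equality $|c_{\betab}| = 1/(n+1)$, I would track the common equilibrium value $v := u_j(\eq{f})$; in the barycentric case $v = U/(n+1)$, and the identity $f(\xb_*) = (n+1)v - |c_{\betab}|$ pins $|c_{\betab}|$ once the minimum value is fixed. The equality $|c_{\betab}| = 1/(n+1)$ should then emerge from the normalization implicit in the statement (such as $\sum_j c_j = 1$, giving $U = 1$) together with the boundary relation $|c_{\betab}| = v$ from \cref{corollary:EquilibriaInnerTerm}. The main obstacle will be justifying this normalization cleanly: the critical-point computation above shows that $\xb_* = \eq{f}$ is equivalent to the barycentric property for \emph{any} $|c_{\betab}|$, so the extra constraint $|c_{\betab}| = 1/(n+1)$ must encode a scaling convention---plausibly arising from viewing $f$ as a canonical representative on the boundary of $\circuitDSONC$ in the spirit of the "agiform plus group action" viewpoint from \cref{Section:StructuralViewpoints}---which needs to be made explicit to complete the argument.
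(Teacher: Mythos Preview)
Your approach---setting $\nabla f(\xb_*)=0$, recognising that $(u_0(\xb_*),\ldots,u_n(\xb_*))$ lies in the one-dimensional kernel of the matrix with columns $\alpb_j-\betab$, and comparing with the equilibrium equations---is exactly the route the paper takes. Your derivation of the equivalence ``$\xb_*=\eq{f}\Leftrightarrow$ barycentric circuit'' is the same argument as the paper's, and in fact your version is cleaner: you keep the scalar $U=\sum_k u_k(\xb_*)$ explicit, whereas the paper writes $c_je^{\langle\xb_*,\alpb_j-\betab\rangle}=\lambda_j$ directly, silently normalising the kernel vector to the barycentric representative.

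Where you hesitate is precisely where the paper makes a leap. The paper obtains $|c_{\betab}|=\tfrac{1}{n+1}$ by inserting $|c_{\betab}|$ into the chain of equalities, writing
\[
|c_{\betab}| \;=\; c_je^{\langle\xb_*,\alpb_j-\betab\rangle} \;=\; \lambda_j \qquad (j=0,\ldots,n),
\]
and reading off $|c_{\betab}|=\lambda_j=\tfrac{1}{n+1}$. The first equality is the relation from \cref{corollary:EquilibriaInnerTerm}, i.e., it uses that at the equilibrium the common monomial value equals $|c_{\betab}|$---which is exactly the boundary-of-DSONC condition you suspected. The second equality relies on the normalisation $U=1$ you flagged. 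So your diagnosis is accurate: the ``$|c_{\betab}|=\tfrac{1}{n+1}$'' clause does not follow from $\xb_*=\eq{f}$ alone (your counter-reasoning shows the barycentric property is independent of $|c_{\betab}|$); the paper's proof gets it by tacitly pairing the equilibrium with the inner-term identity and by fixing the kernel scalar. To finish in the paper's style you simply adopt those two identifications; the remaining obstacle you describe is not a missing idea on your side but an implicit hypothesis the paper uses without isolating it.
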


We emphasize that this proposition also holds in the special case, where $\xb_{\ast} = \Vector{s}$ is a singular point of $f$, i.e. when $f$ is contained in the boundary of the primal SONC cone. 

\begin{proof}[Proof of \cref{proposition:MinimizerEquilibria}]
	Note first that $f$ has a unique extremal point, which is always a minimizer, see \cref{corollary:Minima}.
	Thus, there exists exactly one point $\xb_{\ast}$ which satisfies
	\begin{align*}
		x_i \frac{\partial f}{\partial x_i} \ = \ \sum_{j = 0}^n c_j ( \left(\alpha_j\right)_i - \beta_i) e^{\langle \xb, \alpb_j - \betab \rangle} \ = \ 0 \qquad \text{ for all } i \in [n].
	\end{align*}
	On the other hand, we know that there exists a (unique) vector $\Vector{\lambda} \in (0,1)^{n+1}$ such that $\sum_{j = 0}^n \lambda_j (\left(\alpha_j\right)_i - \beta_i) = 0$ for all $i \in [n]$, since $\supp(f)$ is a simplex which contains the origin in its interior.
	It follows that the minimizer $\xb_\ast$ must satisfy $c_je^{\langle \xb_{\ast}, \alpb_j - \betab \rangle} = \lambda_j$ for all $j = 0, 1, \ldots, n$.
	Consequently, we have that $\xb_\ast = \eq{f}$ if and only if 
	\begin{align*}
		|c_{\betab}| \ = \ c_j e^{\langle \xb, \alpb_j - \betab \rangle} \ = \  \lambda_j \qquad \text{ for all } j = 0, 1, \ldots, n.
	\end{align*}
	This can be satisfied if and only if $|c_{\betab}| = \lambda_j = \frac{1}{n+1}$ for all $j = 0, 1, \ldots, n$, i.e., if $\supp(f)$ is a barycentric circuit with $|c_{\betab}| = \frac{1}{n+1}$.
\end{proof}

We close the section by showing how the DSONC cone can be expressed in terms of the toric morphism $\varphi_A$ and the action $G$ introduced in the beginning of the section.

\begin{theorem}
	Let $A = A^+ \cup \{\betab\}$ be a circuit with $A^+ = \set{\alpb_0, \ldots, \alpb_n}$ and $\betab = \Vector{0}$, and barycentric coordinates $\Vector{\lambda} \in (0,1)^{n+1}$.
	Let $(\Vector{1},-1) \in \R^{n+2}$ denote the all-1-vector with a single negative sign in the last entry.
	Then we have
	\begin{align*}
		\circuitDSONC \ = \ \{ \langle G(\wb,\varphi_A(\x)), t \cdot (\Vector{1},-1) \rangle \in \R^A \ : \ \wb \in \R^n, \ t \in \R_{>0}\}.
	\end{align*}
	Hereby, the action $G$ moves the equilibrium point from the origin to an arbitrary point in $\R^n$ and adjusts the coefficients such that equation \cref{Equation:Equilibrium} still holds.
\label{theorem:EquilibraTorusAction}
\end{theorem}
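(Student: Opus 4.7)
The plan is to prove both set inclusions by unpacking the action $G$ applied to the toric map $\varphi_A$ and matching the resulting coefficient vector against the dual circuit number condition. The key arithmetic levers are that $\betab = \Vector{0}$ together with $\sum_{j=0}^n \lambda_j \alpb_j = \betab$ collapses the weighted sum of the $\alpb_j$ to zero, while $\sum_j \lambda_j = 1$.

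For the inclusion ``$\supseteq$'', I would fix $\wb \in \R^n$ and $t > 0$ and expand
\begin{align*}
f(\xb) \ = \ \langle G(\wb, \varphi_A(\xb)), t \cdot (\Vector{1}, -1) \rangle \ = \ \sum_{j=0}^n t\, e^{-\langle \wb, \alpb_j \rangle}\, e^{\langle \xb, \alpb_j \rangle} \ - \ t,
\end{align*}
reading off the coefficients $c_j = t\, e^{-\langle \wb, \alpb_j \rangle} > 0$ and $c_{\betab} = -t$. Then, using the two barycentric identities,
\begin{align*}
\check{\Theta}_f \ = \ \prod_{j=0}^n c_j^{\lambda_j} \ = \ t^{\sum_j \lambda_j}\, e^{-\langle \wb,\, \sum_j \lambda_j \alpb_j \rangle} \ = \ t\, e^{-\langle \wb, \Vector{0} \rangle} \ = \ t \ = \ |c_{\betab}|,
\end{align*}
so by \cref{corollary:DualCircuitNumber} we obtain $f \in \circuitDSONC$, in fact precisely on the boundary where \cref{eqn:DualCircuitNumber} is saturated.

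For the inclusion ``$\subseteq$'', given $f \in \circuitDSONC$ of the relevant type (saturating the dual circuit number, $c_{\betab} < 0$), I would set $t = |c_{\betab}|$ and $\wb = \eq{f}$, whose existence on this boundary locus is exactly \cref{corollary:EquilibriaInnerTerm}. The equilibrium identity $|c_{\betab}| = c_j\, e^{\langle \eq{f}, \alpb_j \rangle}$ (using $\betab = \Vector{0}$) rearranges to $c_j = t\, e^{-\langle \wb, \alpb_j \rangle}$, matching the forward calculation and exhibiting $f$ as a point in the parameterized image. Uniqueness of $(\wb, t)$ is forced: $t = |c_{\betab}|$ has no flexibility, and two candidates $\wb, \wb'$ satisfying the coefficient formulas give $\langle \wb - \wb', \alpb_j \rangle = 0$ for all $j$, which combined with the fact that $\{\alpb_j - \alpb_0\}_{j=1}^n$ is a basis of $\R^n$ (since $A^+$ is a simplex) yields $\wb = \wb'$.

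The main obstacle I foresee is a dimension check: the parameter space $(\wb, t) \in \R^n \times \R_{>0}$ is $(n+1)$-dimensional, whereas $\circuitDSONC$ is full-dimensional in $\R^A \cong \R^{n+2}$. The forward calculation always yields $\check{\Theta}_f = |c_{\betab}|$, so the parameterization literally reaches only the boundary locus $\{f \in \circuitDSONC : c_{\betab} < 0,\, c_j > 0,\, \check{\Theta}_f = |c_{\betab}|\}$; strict interior points of $\circuitDSONC$ and points with $c_{\betab} \ge 0$ are not in the image. The content of the theorem should therefore be read as identifying this distinguished boundary piece with the orbit of the canonical coefficient vector $(\Vector{1}, -1)$ under $G$ combined with conic scaling by $t$. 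This is the DSONC analogue of the Forsg\aa{}rd--de Wolff ``agiform plus group action'' picture from \cite{Forsgaard:deWolff:BoundarySONCcone}: the vector $(\Vector{1}, -1)$ plays the role of a distinguished DSONC-boundary circuit function whose equilibrium sits at the origin, and $G$ transports the equilibrium to an arbitrary $\wb \in \R^n$ while $t$ provides the scaling.
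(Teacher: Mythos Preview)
Your proposal is correct and follows essentially the same approach as the paper: unpack $G(\wb,\varphi_A(\x))$ applied to $t\cdot(\Vector{1},-1)$, read off the coefficients, and verify the dual circuit number identity using $\sum_j\lambda_j=1$ and $\sum_j\lambda_j\alpb_j=\Vector{0}$; for the converse, recover $(\wb,t)$ from the equilibrium point via \cref{corollary:EquilibriaInnerTerm}. Your observation about the dimension mismatch is on point and in fact matches what the paper's own argument establishes---its reverse inclusion is carried out only for $f\in\partial\circuitDSONC$, so the parameterization really describes the boundary locus $\{c_{\betab}<0,\ \check\Theta_f=|c_{\betab}|\}$ rather than the full cone.
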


Analogously to thinking about circuit functions in the SONC cone as \emph{``agiforms plus group action''}, we can thus think about those in the DSONC cone as a \emph{``circuit functions with coefficient vector $(\Vector{1},-1)$ plus group action''}.

\begin{proof}
	First, we show the direction ``$\subseteq$''. 
	As discussed in \cref{example:Motzkin}, circuit functions with coefficient vector $(\Vector{1},-1)$ are contained in the boundary of the DSONC cone with equilibrium point at the origin, which means that $\langle \varphi_A(\x), (\Vector{1},-1) \rangle\ \in \circuitDSONC$.
	Since furthermore
	\begin{align*}
		\langle G(\wb,\varphi_A(\x)), t\cdot (\Vector{1},-1) \rangle
		\ & = \  t\cdot \left(\sum_{j = 0}^n e^{\langle \xb - \wb, \alpb_j \rangle} - e^{\langle \xb - \wb, \betab \rangle}\right) \\
		& = \ t\cdot \left(\sum_{j = 0}^n e^{\langle -\wb, \alpb_j \rangle} \cdot e^{\langle \xb, \alpb_j \rangle} - \prod_{j = 0}^n \left(e^{\langle -\wb, \alpb_j \rangle}\right)^{\lambda_j} \cdot \expbeta\right),
	\end{align*}
	where the equation for the last step follows since $\Vector{\lambda}$ yields the barycentric coordinates of $\betab$ in terms of the $\alpb_j$, we have that $G(\wb,\varphi_A(\x)),  t\cdot (\Vector{1},-1) \rangle\ \in \circuitDSONC$.
	
	For the direction ``$\supseteq$'' we observe that every circuit function $f(\x)$ in $\partial \circuitDSONC$ has an arbitrary but unique equilibrium point $\eq{f} \in \R^n$ satisfying \cref{Equation:Equilibrium}.
	If $\eq{f}$ is fixed, then the coefficients of $f(\x)$ are uniquely determined up to multiplication with a positive scalar $t$.
	Since we have seen that the image of $G(\wb,\varphi_A(\x)), t\cdot (\Vector{1},-1) \rangle$ yields a circuit function in $\circuitDSONC$ with equilibrium point $\wb$, which is chosen arbitrarily in $\R^n$, the statement follows.
\end{proof}

\bibliographystyle{amsalpha}
\bibliography{closure_sonc_dual_arxiv_1.bib}

\appendix
\section{}
\label{section:Appendix}

We present proofs here which have been omitted in the main part of this paper for sake of brevity and readability. \\ 

In order to show \cref{proposition:ExtremeRays}, we need the following variant of H\"older's inequality.
\begin{theorem}[\cite{Hardy:Littlewood:Polya:Inequalities}]
	Let $m,n \in \N$ and let $(a_{ij}) \in \R^{n \times m}$.
	Let further $\Vector{\lambda} \in [0,1]^{n}$ satisfy $\sum_{i = 1}^n \lambda_i = 1$.
	It holds that 
	\begin{align}
		\sum_{j = 1}^m \prod_{i = 1}^n (a_{ij})^{\lambda_i} \ \le \ \prod_{i = 1}^n \left( \sum_{j = 1}^m a_{ij} \right)^{\lambda_i}.
	\label{eqn:Hoelder}
	\end{align}
	The inequality \cref{eqn:Hoelder} holds with equality if either there exists some $k \in [n]$ such that $a_{kj} = 0$ for all $j \in [m]$ or if the matrix $(a_{ij})$ has rank $1$.
\end{theorem}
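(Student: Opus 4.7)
The plan is to reduce the inequality to the weighted AM/GM inequality by a row-normalization trick. Assume throughout that $a_{ij} \geq 0$, which is implicit since we take real exponents $\lambda_i \in [0,1]$. Set the row sums $S_i = \sum_{j=1}^m a_{ij}$, and adopt the usual conventions $0^0 = 1$ and $0^{\lambda} = 0$ for $\lambda > 0$.

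The first step is to dispose of the degenerate case: suppose $S_k = 0$ for some $k \in [n]$, i.e., $a_{kj} = 0$ for all $j$. If $\lambda_k > 0$, then $S_k^{\lambda_k} = 0$ so the right-hand side of \cref{eqn:Hoelder} is zero, and every summand on the left-hand side contains the factor $a_{kj}^{\lambda_k} = 0$, so the left-hand side is zero as well; hence equality holds. (If $\lambda_k = 0$, the index $k$ does not appear in either side and can be discarded from the outset.) This already verifies the first equality condition. The remaining case is $S_i > 0$ for all $i$ with $\lambda_i > 0$, in which case we may divide \cref{eqn:Hoelder} by $\prod_i S_i^{\lambda_i}$, reducing the claim to
\begin{align*}
    \sum_{j=1}^m \prod_{i=1}^n \left( \frac{a_{ij}}{S_i} \right)^{\lambda_i} \ \leq \ 1.
\end{align*}

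Now I would apply the weighted AM/GM inequality row-by-row. For every fixed $j \in [m]$, the weighted AM/GM inequality (with weights $\lambda_i$) gives
\begin{align*}
    \prod_{i=1}^n \left( \frac{a_{ij}}{S_i} \right)^{\lambda_i} \ \leq \ \sum_{i=1}^n \lambda_i \cdot \frac{a_{ij}}{S_i}.
\end{align*}
Summing over $j$ and interchanging the order of summation yields
\begin{align*}
    \sum_{j=1}^m \prod_{i=1}^n \left( \frac{a_{ij}}{S_i} \right)^{\lambda_i} \ \leq \ \sum_{i=1}^n \frac{\lambda_i}{S_i} \sum_{j=1}^m a_{ij} \ = \ \sum_{i=1}^n \lambda_i \ = \ 1,
\end{align*}
establishing the inequality.

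For the equality statement, the rank-$1$ condition remains. Recall that for fixed $j$, the weighted AM/GM step is an equality precisely when $a_{ij}/S_i$ is constant across those indices $i$ with $\lambda_i > 0$. Call this common value $c_j$ (possibly depending on $j$); for indices with $\lambda_i = 0$ the corresponding factor is $1$ and places no constraint. Then $a_{ij} = c_j S_i$ for all $i$ with $\lambda_i > 0$, which (after restricting to the nontrivial rows) means that $(a_{ij})$ is the outer product of the vectors $(S_i)_i$ and $(c_j)_j$, i.e., has rank at most $1$. Conversely, if the matrix is the outer product of two positive vectors, a direct substitution shows that both inequalities above become equalities simultaneously for every $j$, yielding equality in \cref{eqn:Hoelder}.

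The main obstacle is merely the careful bookkeeping around zero entries and zero weights; the mathematical content reduces to one application of weighted AM/GM once the right normalization has been made. In particular no deeper inequality is required, which is why this classical result is usually attributed as a direct corollary of AM/GM rather than proved from scratch.
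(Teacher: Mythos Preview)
Your proof is correct and is the standard reduction of this generalized H\"older inequality to the weighted AM/GM inequality via row-normalization. Note, however, that the paper does not give its own proof of this statement: it is quoted verbatim from \cite{Hardy:Littlewood:Polya:Inequalities} as an auxiliary tool for the proof of \cref{proposition:ExtremeRays}, so there is no argument in the paper to compare against. Your write-up in fact goes slightly beyond the stated theorem, since you also explain the necessary direction of the equality characterization (the paper only asserts sufficiency), and you handle the bookkeeping around zero weights and zero rows carefully.
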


\begin{proof}[Proof of \cref{proposition:ExtremeRays}]
	We need to show that 

	\begin{enumerate}
		\item Every function $f$ in $\ADSONC$ can be written as a sum of functions in $E_1$ and $E_2$.
		\item Functions in $E_1 \cup E_2$ cannot be written as sums of other elements in $\ADSONC$.
	\end{enumerate}

	\medskip

	We begin by showing part (1).
	It is clear from \cref{theorem:ReducedDualContainment} that every function in $\ADSONC$ can be written as a sum of minimal circuit functions $f^{(\betab)}$ in the DSONC cone, which are supported on some $C^+ \cup \set{\betab} \in \cCmin(A)$.
	As usual, $\betab \in A$ denotes the single inner point of $f^{(\betab)}$.
	If we write $f^{(\betab)} = \sum_{\alpb \in C^+} c^{(\betab)}_{\alpb} \expalpha + c^{(\betab)}_{\betab} \expbeta \in \mincircuitDSONC$, then by \cref{corollary:DualCircuitNumber} it holds that $-c^{(\betab)}_{\betab} \le \prod_{\alpb \in C^+} \left(c^{(\betab)}_{\alpb}\right)^{\lambda_{\alpb}}$ for the unique vector of barycentric coordinates $\Vector{\lambda} \in \Lambda(C^+, \betab)$.
	We can thus assume without loss of generality that $f$ is a minimal circuit function supported on $C^+ \cup \set{\betab} \in \cCmin(A)$ satisfying
	\begin{align*}
		-c_{\betab} \ \le \ \prod_{\alpb \in C^+} c_{\alpb}^{\lambda_{\alpb}} \ = \ \check\Theta_{f}.
	\end{align*}
	If $c_{\betab} \ge 0$, then $f$ is a sum of nonnegative exponential monomials and can thus trivially be written as a sum of functions in $E_2$.
	Assume now that $c_{\betab} < 0$.
	Since we have $-c_{\betab} = |c_{\betab}| \le \check\Theta_f$, we can write $f$ as a convex combination of functions $f_1$ and $f_2$ supported on (subsets of) $C^+ \cup \set{\betab}$, such that the coefficients of $f_1$ and $f_2$ corresponding to $C^+$ are identical to the positive coefficients of $f$, and the coefficients of $f_1$ and $f_2$ corresponding to $\betab$ are $-\check\Theta_f$ and $0$, respectively.
	Thus, $f_1 \in E_1$ and $f_2 \in E_2$.

	To prove part (2), assume that for $f \in E_1 \cup E_2$ there exist some {\AGE}s $g_{\betab} \in \circuitDSONC$, $\betab \in A^-$, such that $f = \sum_{\betab \in A^-} g_{\betab}$.
	First, we show that 
	\begin{align}
		\conv\left(\bigcup_{\betab \in A^-} \supp(g_{\betab})\right) \ \subseteq \ \conv\left(\supp(f)\right).
	\label{eqn:SupportContainment}
	\end{align}
	Let $\alpb \in \vertices{\conv\left(\bigcup_{\betab \in A^-} \supp(g_{\betab}) \right)}$.
	It follows that $\alpb \in \vertices{\conv\left(\supp(g_{\betab})\right)}$ for all $\betab$ such that $\alpb \in \supp(g_{\betab})$.
	Since all $g_{\betab}$ are nonnegative, it needs to hold for every $g_{\betab}$ that the coefficient $c^{(\betab)}_{\alpb}$ corresponding to $\alpb$ is nonnegative.
	Thus, $\sum_{\betab \in A^-} c^{(\betab)}_{\alpb} > 0$ and $\alpb \in \supp(f)$.
	The claim \cref{eqn:SupportContainment} follows.

	Assume now that $f \in E_1$.
	We denote the support set of $f$ by $C^+ \cup \set{\Vector{\gamma}}$.
	Then by \cref{eqn:SupportContainment}, all $g_{\betab}$ are of the form
	\begin{align*}
		g_{\betab} \ = \ \sum_{\alpb \in C^+} c^{(\betab)}_{\alpb} \expalpha + c^{(\betab)}_{\Vector{\gamma}} e^{\langle \xb, \Vector{\gamma} \rangle},
	\end{align*}
	where $c^{(\betab)}_{\alpb} \ge 0$ for all $\alpb \in C^+$.
	Since by assumption $g_{\betab} \in \circuitDSONC$, it holds that $|c^{(\betab)}_{\Vector{\gamma}}| \le \check\Theta_{g_{\betab}}$.
	It follows that 
	\begin{align}
		\begin{split}
		|c_{\Vector{\gamma}}| & \ = \ \sum_{\betab \in A^-} -c^{(\betab)}_{\Vector{\gamma}} 
		\ \le \ \sum_{\betab \in A^-} -\check\Theta_{g_{\betab}} 
		\ = \ \sum_{\betab \in A^-} \left( \prod_{\alpb \in C^+} \left(c^{(\betab)}_{\alpb} \right)^{\lambda_{\alpb}}\right) \\
		& \ \stackrel{(*)}{\le} \ \prod_{\alpb \in C^+} \left( \sum_{\betab \in A^-} c^{(\betab)}_{\alpb} \right)^{\lambda_{\alpb}} 
		\ = \ \prod_{\alpb \in C^+} c_{\alpb}^{\lambda_{\alpb}} 
		\ = \ \check\Theta_f 
		\ = \ |c_{\Vector{\gamma}}|,
		\end{split}
	\label{eqn:HoelderChain}
	\end{align}
	where $\Vector{\lambda}$ is the unique vector in $\Lambda(C^+, \Vector{\gamma})$. 
	Note that we have used the H\"older inequality in $(*)$.
	Since in this case the inequality holds with equality, we have that either there exists some $\alpb \in C^+$ such that $c^{(\betab)}_{\alpb} = 0$ for all $\betab \in A^-$ or the matrix $\left(c^{(\betab)}_{\alpb}\right)_{\betab, \alpb}$ has rank $1$.
	In the first case it follows that $c_{\Vector{\gamma}} = 0$, so we can disregard this.
	Consider the second case. 
	For the matrix $\left(c^{(\betab)}_{\alpb}\right)_{\betab, \alpb}$ to have rank $1$, there must exist some $\tilde{\Vector{c}} \in \R^{A^-}_{\ge 0}$ such that $c^{(\betab)}_{\alpb} = \tilde{c}^{(\betab)} c_{\alpb}$ for all $\betab \in A^-$ and all $\alpb \in C^+$.
	It follows from \cref{eqn:HoelderChain} that 
	\begin{align*}
		\left|c^{(\betab)}_{\Vector{\gamma}}\right| & \ = \ \prod_{\alpb \in C^+} (c^{(\betab)}_{\alpb})^{\lambda_{\alpb}} 
		\ = \ \prod_{\alpb \in C^+} (\tilde{c}^{(\betab)} c_{\alpb})^{\lambda_{\alpb}} 
		\ = \ \left(\prod_{\alpb \in C^+} (\tilde{c}^{(\betab)})^{\lambda_{\alpb}}\right) \cdot \check{\Theta}_f 
		\ = \ \tilde{c}^{(\betab)} \cdot |c_{\Vector{\gamma}}|.
	\end{align*}
	Thus, all $g_{\betab}$ are multiples of $f$ and the claim follows.
\end{proof}

\bigskip

We now show that \cref{proposition:ClosureAffine} holds, i.e., that the DSONC cone is closed under affine transformation of variables.

\begin{proof}[Proof of \cref{proposition:ClosureAffine}]
	Let $A = A^+ \cup \set{\betab} \subset \R^n$ denote the support set of $f$.
	Then $A$ is a circuit and $f$ is of the form
	\begin{align*}
		f(\xb) \ = \ \sum_{{\alpb}\in A^+} c_{\alpb} \expalpha + c_{\betab} \expbeta \ \in \ \circuitDSONC.
	\end{align*}
	Since
	\begin{align*}
		f(T(\xb)) = \sum_{{\alpb}\in A^+} c_{\alpb} e^{\langle \Vector{M} \xb + \Vector{a},{\alpb} \rangle} + c_{\betab} e^{\langle \Vector{M} \xb + \Vector{a},{\betab} \rangle} = \sum_{{\alpb}\in A^+} c_{\alpb} e^{\langle \Vector{a},{\alpb} \rangle}  e^{\langle \xb ,  \Vector{M}\T \alpb \rangle} + c_{\betab} e^{\langle \Vector{a}, \betab \rangle} e^{\langle \xb, \Vector{M}\T \betab \rangle},
	\end{align*}
	the support set of $f(T(\xb))$ is given by $\tilde{A}^+ \cup \set{\tilde{\betab}}$, where $\tilde{A}^+ \ = \ \set{\Vector{M}\T \alpb  \ : \  \alpb \in A^+}$ and $\tilde{\betab} = \Vector{M}\T \betab$.
	Note that $\tilde{A}^+ \cup \set{\tilde{\betab}}$ is still a circuit since
	\begin{align*}
		\tilde{\betab} \ = \ \Vector{M}\T \betab \ = \ \Vector{M}\T \sum_{\alpb \in A^+} \lambda_{\alpb} \alpb \ = \ \sum_{\alpb \in A^+} \lambda_{\alpb} \Vector{M}\T \alpb \ = \ \sum_{\tilde{\alpb} \in \tilde{A}^+} \lambda_{\tilde{\alpb}} \tilde{\alpb},
	\end{align*}
	where $\tilde{\alpb} = \Vector{M}\T \alpb$ and $\lambda_{\tilde{\alpb}} = \lambda_{\alpb}$ for all $\alpb \in A^+$.
	Since $f \in \circuitDSONC$, it holds that 
	\begin{align*}
		|c_{\betab}| \ \le \ \prod_{\alpb \in A^+} c_{\alpb}^{\lambda_{\alpb}}.
	\end{align*}
	Combining this with the previous observations yields 
	\begin{align*}
		\left|c_{\betab} \cdot e^{\langle \Vector{a}, \betab \rangle} \right| 
		\ &\le \ \left( \prod_{\alpb \in A^+} c_{\alpb}^{\lambda_{\alpb}} \right) \cdot e^{\langle \Vector{a}, \betab \rangle} 
		\ = \ \left( \prod_{\alpb \in A^+} c_{\alpb}^{\lambda_{\alpb}} \right) \cdot e^{\langle \Vector{a}, \sum_{\alpb \in A^+} \lambda_{\alpb}\alpb \rangle} \\
		\ &= \ \left( \prod_{\alpb \in A^+} c_{\alpb}^{\lambda_{\alpb}} \right) \cdot \left( \prod_{\alpb \in A^+} \left( e^{\langle \Vector{a}, \alpb \rangle}\right)^{\lambda_{\alpb}} \right)
		\ = \ \prod_{\alpb \in A^+} \left( c_{\alpb} \cdot e^{\langle \Vector{a}, \alpb \rangle}\right)^{\lambda_{\tilde\alpb}} .
	\end{align*}
	The claim now follows from \cref{corollary:DualCircuitNumber}.
\end{proof}

\end{document}